\documentclass[12pt, a4paper, oneside]{amsart}
\usepackage[a4paper, margin = 2cm]{geometry}
\usepackage[english]{babel}
\usepackage[T2A]{fontenc}
\usepackage[utf8]{inputenc} 
\usepackage{amsfonts}
\usepackage{amssymb, amsthm, amscd}
\usepackage{amsmath}
\usepackage{mathtools}
\usepackage{needspace}
\usepackage{etoolbox}
\usepackage{lipsum}
\usepackage{comment}
\usepackage{cmap}
\usepackage[pdftex]{graphicx}
\usepackage[unicode]{hyperref}
\usepackage[matrix,arrow,curve]{xy}
\usepackage[usenames,dvipsnames]{xcolor}
\usepackage{colortbl}
\usepackage{textcomp}
\usepackage{cite}
\usepackage{euscript}

\DeclareFontFamily{U}{mathx}{}
\DeclareFontShape{U}{mathx}{m}{n}{<-> mathx10}{}
\DeclareSymbolFont{mathx}{U}{mathx}{m}{n}
\DeclareMathAccent{\widehat}{0}{mathx}{"70}
\DeclareMathAccent{\widecheck}{0}{mathx}{"71}

\makeatletter
\def\@settitle{\begin{center}
		\baselineskip14\p@\relax
		\bfseries
		\LARGE
		\@title
	\end{center}
}
\makeatother

\newtheorem{theorem}{Theorem}[section]

\newtheorem{lemma}[theorem]{Lemma}
\newtheorem{corollary}[theorem]{Corollary}
\newtheorem{remark}[theorem]{Remark}
\newtheorem{definition}[theorem]{Definition}
\newtheorem{mainth}{Theorem}
\newtheorem{maincor}{Corollary}

\theoremstyle{definition}
\newtheorem{example}{Example}

\renewcommand{\geq}{\geqslant}
\renewcommand{\leq}{\leqslant}
\newcommand{\R}{\mathbb{R}}
\newcommand{\N}{\mathbb{N}}
\newcommand{\Z}{\mathbb{Z}}

\newcommand{\const}{\mathrm{const}}
\newcommand{\CC}{\mathbb{C}}
\renewcommand{\C}{\mathbb{C}}

\DeclareMathOperator{\eps}{\varepsilon}

\DeclareMathOperator{\diam}{diam}

\DeclareMathOperator*{\argmin}{argmin}
\newcommand{\conv}{\text{conv}}
\newcommand{\spann}{\text{span}}
\newcommand{\clspan}{\overline{\operatorname{span}}}

\newcommand{\spec}{\text{spec}}
\newcommand{\pw}{\mathcal{P}W_\pi}

\DeclarePairedDelimiter\lr{(}{)}
\DeclarePairedDelimiter\lrsc{<}{>}

\newcommand{\eqdef}{\stackrel{\mathrm{def}}{=}}

\newcommand{\wb}{\{w r_n \}_{n \in \Z \setminus A}}

\newenvironment{enumerate*}%
  {\begin{enumerate}%
    \setlength{\itemsep}{1pt}%
    \setlength{\parskip}{1pt}}%
  {\end{enumerate}}

\makeatletter
\newcommand*{\@old@slash}{}\let\@old@slash\slash
\def\slash{\relax\ifmmode\delimiter"502F30E\mathopen{}\else\@old@slash\fi}
\makeatother

\title{Exactness of weighted exponential systems with a defect}
\author{Ivan Arefev and Andrei V. Semenov}

\thanks{The work is supported by Ministry of Science and Higher Education of the Russian Federation under agreement No.
075-15-2025-013. The second author is winner of the “Leader” competition conducted by the Foundation for the Advancement of Theoretical Physics and Mathematics “BASIS” and would like to thank its sponsors and jury.}

\keywords{complex analysis, time-frequency analysis, functional analysis, exponents, basis}

\address{Andrei V. Semenov
Department of Mathematics and Computer Science, Saint Petersburg State University\\
14 line of Vasilievsky island 29, Saint Petersburg, Russia 199178}

\email{asemenov.spb.56@gmail.com}

\address{Ivan Arefev
Probability Techniques in Analysis Laboratory, Saint Petersburg State University\\
14 line of Vasilievsky island 29, Saint Petersburg, Russia 199178}

\email{ivanarefiev21@gmail.com}

\begin{document}
\maketitle
\begin{abstract}
Let $w \colon (0,1) \to \mathbb{R}_{+}$ be a weight. We prove that for an arbitrary Schauder basis $\{e^{i \lambda_n t}\}_{n \in \mathbb{Z}}$ in $L^2(0,1)$ and an arbitrary lacunary defect set $A \subset \mathbb{Z}$ the system $\{w(t) r_n(t)\}_{n \in \mathbb{Z} \setminus A}$ is always complete and never minimal for any weight $w$ satisfying a natural decay condition, which is sharp on the exponential scale.\par 
  
Moreover, we establish a simple combinatorial criterion for systems of the form $ \{ w(t) e^{2\pi i n t} \}_{n \in \mathbb{Z} \setminus A} $ to be complete and minimal in $L^2(0,1)$ for an arbitrary weight $w$ and a finite defect set $A \subset \mathbb{Z}$. 

\end{abstract}

\section{Introduction}
Let $\{e^{i \lambda_n t}\}_{n \in \mathbb{Z}}$ be a complete and minimal (or {\it exact}) system in $L^2(0,1)$ and let $\Lambda = \{\lambda_n \mid n \in \Z\} \subset \R$ be its spectrum. Then there exists a unique {\it biorthogonal system} $\{r_n(t)\}_{n \in \mathbb{Z}}$ defined by the rule $\langle e^{i \lambda_n t}, r_m(t) \rangle = \delta_{m,n}$. A function $w \colon (0, 1) \to \R$ is called a {\it weight} if it is measurable and positive almost everywhere. Consider the weighted system $E = \{r_n(t) w(t)\}_{n \in \Z \setminus A}$ for some $A \subset \Z$. The set $A$ is called the {\it defect} of the system $E$.  The natural question arises: how fast should the weight $w$ decay and how large can the defect $A$ be for the system $E$ to be complete and minimal in $L^2(0,1)$?

In 2012, Heil and Yoon studied this problem for the orthonormal basis $\{e^{2 \pi i n t}\}_{n \in \Z}$ and a polynomial weight $w(t) = t^N$ for $N \in \N$. They showed that if the system $E = \{ t^N e^{2 \pi i n t} \}_{n \in \Z \setminus A} $ is complete in $L^2(0,1)$, then $|A| \leq N$ and if the system $E$ is minimal, then $|A| \geq N$. They also proved the converse needed for exactness: the system $E$ is complete and minimal if and only if $|A| = N$. Furthermore, their Theorem 8(c) provides a criterion for exactness of systems of the form $\{ w(t) e^{2\pi i n t} \}_{n \in \mathbb{Z} \setminus A}$ for an arbitrary $w \in L^2(\mathbb{T})$.\par
Later in 2018 Shukurov in \cite{Shukurov} considered the case of an arbitrary weight $w$ and a singleton defect set with $|A| = 1$. He showed that for an arbitrary $k_0 \in \Z$ the system $\{ w(t) e^{i n t} \}_{n \ne k_0}$ is complete and minimal in $L^p(-\pi, \pi)$ with $1 < p < +\infty$ if and only if $w \in L^p(-\pi, \pi)$ and $\frac{1}{w} \notin L^q(-\pi, \pi)$ for $1/p+1/q=1$, and one of the following two conditions holds:
\begin{enumerate}
    \item There is a unique $t_0 \in [-\pi, \pi]$ such that $\frac{t - t_0}{w(t)} \in L^q(-\pi, \pi)$,
    \item $ \frac{(t - \pi)(t + \pi)}{w(t)} \in L^q(-\pi, \pi) $.
\end{enumerate}

Finally, in 2025, Zikkos in \cite{Zikkos} generalized the ideas of Yoon and Heil: he considered the case of an arbitrary Schauder basis $\{e^{i \lambda_n t}\}_{n \in \Z}$ and a power weight $w(t) = t^{\alpha}$ for $\alpha > 0$. For a defect set $A\subset \Z$ such that $1\leq |A| = M < \infty$ he showed that $\alpha \geq M - \frac{1}{2}$ implies the completeness of the system $E = \{ t^\alpha r_n(t)\}_{n \in \Z \setminus A}$, while $\alpha < M + \frac{1}{2}$ implies the minimality of $E$. Moreover, the system $E$ is complete and minimal if and only if $\alpha \in [M - \frac{1}{2}, M + \frac{1}{2})$.

Therefore, the case of a finite defect set and weights with polynomial decay is covered. It is not hard to see that if $w$ has super-polynomial decay at 0, then the system $\wb$ is always complete and never minimal for any finite defect set $A$ (see Lemma \ref{th:finite}). For weights with super-polynomial decay, the corresponding question for infinite defect sets remains open. For general weights and finite defects in the standard orthonormal basis, Heil and Yoon gave an exactness criterion in terms of $L^2$-zeros and polynomial interpolation. 
\begin{theorem} [Heil, Yoon, \cite{Heil}] 
    Given $w \in L^2(\mathbb{T})\setminus\{0\}$ and a nonempty finite set $A \subset \Z$, the system $\{ w(t) e^{2\pi i n t} \}_{n \in \Z \setminus A}$ is complete and minimal in $L^2(\mathbb{T})$ if and only if there exist pairwise distinct points $\alpha_1,\ldots,\alpha_k\in\mathbb T$, where $k\ge1$, and positive integers $n_1,\ldots,n_k$ such that
    \begin{enumerate}
        \item $w$ has an $L^2$-zero at $\alpha_i$ of order $n_i$ for each $1 \leq i \leq k$,
        \item $(1/w)\chi_I \in L^2(\mathbb{T})$ for any closed interval $I \subset \mathbb{T}\setminus\{ \alpha_i \}_{i = 1}^k$,
        \item $n_1 + \dots + n_k \geq |A|$, 
        \item For each $m \in \Z \setminus A$ there exists a unique trigonometric polynomial $p_m$ of the form 
$$p_m(t) = e^{2\pi i m t} + \sum\limits_{k \in A}c_{m, k} e^{2\pi i k t}$$
        such that $p_m^{(j)}(\alpha_i) = 0$ for $1 \leq i \leq k$ and $0 \leq j \leq n_i - 1$.

    \end{enumerate}
\end{theorem}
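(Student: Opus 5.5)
The plan is to reduce both completeness and minimality to statements about trigonometric polynomials $q$ with spectrum in $A$ (i.e. $q\in\spann\{e^{2\pi i k t}\}_{k\in A}$), via duality. Throughout I use the definition that $w$ has an $L^2$-zero of order $n\ge 1$ at $\alpha$ if $|t-\alpha|^{n}/w\in L^2$ near $\alpha$ but $|t-\alpha|^{n-1}/w\notin L^2$ near $\alpha$.

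\textbf{Completeness.} The system fails to be complete iff there is $g\in L^2(\mathbb T)\setminus\{0\}$ with $\langle w e^{2\pi i n t},g\rangle=0$ for all $n\notin A$. Since $w,g\in L^2$, we have $\bar w g\in L^1$, and the condition says that its Fourier coefficients vanish off $A$. Hence $\bar w g=q$ for a trigonometric polynomial $q\ne 0$ with spectrum in $A$. Since $w\neq 0$ a.e., this gives:
\begin{itemize}
\item[(C)] the system is complete iff there is no nonzero $q$ with spectrum in $A$ such that $q/\bar w\in L^2$.
\end{itemize}

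\textbf{Minimality.} Minimality is equivalent to the existence of a biorthogonal family $g_m\in L^2$, $m\notin A$. The same computation gives $\bar w g_m=e^{2\pi i m t}+\sum_{k\in A}c_{m,k}e^{2\pi i k t}=:p_m$. Hence:
\begin{itemize}
\item[(M)] the system is minimal iff for every $m\notin A$ there is such a $p_m$ with $p_m/\bar w\in L^2$.
\end{itemize}
Under (M), if (C) also holds, then each $p_m$ is unique, since the difference of two candidates is a $q$ as in (C). Conversely, uniqueness of the $p_m$ forces (C), because $p_m+q$ would be another candidate.

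\textbf{Translating $p/w\in L^2$ into vanishing conditions.} A nonzero trigonometric polynomial $p$ has finitely many zeros on $\mathbb T$, each of finite order, and $|p(t)|\asymp|t-\alpha|^{j}$ near a zero $\alpha$ of order $j$.

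For \emph{necessity}, fix any $m\notin A$ and set $p=p_m$. Away from the zero set of $p$ we get $1/w\in L^2_{\rm loc}$. So the set of points where $1/w$ fails to be locally square integrable is a finite set $\{\alpha_1,\dots,\alpha_k\}$, and condition (2) holds. At each $\alpha_i$, the function $|t-\alpha_i|^{j}/w$ is in $L^2$ near $\alpha_i$ for $j$ equal to the order of vanishing of $p$ there, so $w$ has a finite-order $L^2$-zero $n_i\ge1$ at $\alpha_i$. This gives condition (1). Moreover, $p/w\in L^2$ forces $p^{(j)}(\alpha_i)=0$ for $j<n_i$, which is the vanishing required in (4).

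Conversely, suppose (1) and (2) hold and $p$ vanishes to order $\ge n_i$ at each $\alpha_i$. Then $p/w\in L^2$: near $\alpha_i$ we have $|p|\lesssim|t-\alpha_i|^{n_i}$, and elsewhere we use (2) together with the boundedness of $p$. So under (1) and (2), $p/w\in L^2$ iff $p$ satisfies the $\sum n_i$ linear interpolation conditions of (4).

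\textbf{Conditions $k\ge1$ and (3).} If $k=0$, then $1/w\in L^2$, and $q=e^{2\pi i a t}$ with $a\in A$ violates (C). If $\sum n_i<|A|$, then the homogeneous linear system in the $|A|$ coefficients of $q$ has fewer equations than unknowns. It therefore has a nonzero solution $q$, which again contradicts (C). For sufficiency, (4) supplies the $p_m$, hence minimality by (M). Uniqueness in (4) rules out any nonzero admissible $q$, hence completeness by (C). Here (3) is in fact implied by that uniqueness, by the same counting argument.

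\textbf{Main obstacle.} The delicate step is the local equivalence "$p/w\in L^2$ near $\alpha$ iff $\operatorname{ord}_\alpha p\ge n$". In particular one must check that, at each $\alpha_i$, vanishing of order exactly $n_i-1$ really fails; this uses the "but not $n-1$" part of the definition of an $L^2$-zero, since $|p|\asymp|t-\alpha|^{n_i-1}$ there. One must also make sure that the finite exceptional set does not depend on the choice of $m$.
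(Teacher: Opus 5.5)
Your proposal is correct. The paper only cites this theorem, but its Section 7 machinery for the analogous finite-defect criterion follows the same route: your duality reduction is Lemma~\ref{lem:L2_weight}, your local equivalence ``$p/w\in L^2$ iff $p$ vanishes to order at least $n_i$ at each $\alpha_i$'' is part~2 of Lemma~\ref{lem:proving_def}, and your equation-counting argument replaces the determinant bookkeeping of Lemmas~\ref{lem:power_Z}--\ref{lem:criteria}. The one point you should state explicitly is that $w\neq 0$ a.e., since the statement only assumes $w\in L^2(\mathbb T)\setminus\{0\}$. In the necessity direction this follows from $\bar w g_m=p_m$, because $p_m$ is a nonzero trigonometric polynomial. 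In the sufficiency direction it follows from condition~(2).
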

However, their criterion requires the existence and uniqueness of an interpolating polynomial for every retained frequency, leaving the admissible combinations of zero locations, multiplicities, and removed frequencies described by an infinite family of interpolation conditions.

 We cover the case of an arbitrary complete and minimal system $\{e^{i \lambda_n t}\}_{n \in \Z}$ and an infinite defect set $A \subset \Z$ such that the set $\Lambda(A) = \{\lambda_n \mid n \in A\}$ is lacunary. In the case of a finite defect set $A \subset \Z$ we find an explicit combinatorial form of the exactness criterion. In particular, we show that completeness and minimality force total multiplicity of the $L^2$-zeros of $w$ on $\mathbb T=\mathbb R/\mathbb Z$ to equal $|A|$. We further express the remaining interpolation condition as the nonvanishing of an explicitly specified Schur polynomial evaluated at the corresponding points of the unit circle. The examples in Section 8 show how the Schur-polynomial criterion yields explicit geometric characterizations of admissible zero configurations and clarify the difference between the uniform diameter bound and optimal thresholds for particular defect sets.

We also want to note that the study of exactness for weighted exponential systems is motivated by a natural interest in the boundaries of basis properties. The fundamental question of whether there exist conditional Schauder bases in $L^2(-\pi, \pi)$ among natural trigonometric systems was posed by N. K. Bari in \cite{Bari}. A classical answer was given by K. I. Babenko in \cite{Babenko}, where he proved that the weighted system $\{|t|^\alpha e^{i n t}\}_{n \in \Z}$ forms precisely such a basis for $-1/2 < \alpha < 1/2$ ($\alpha \neq 0$). This result was further generalized by V.F. Gaposhkin \cite{Gaposhkin}. It was established by Hunt, Muckenhoupt, and Wheeden (see \cite{Hunt}), as well as Hunt and Young (see \cite{Young}), that a general weighted exponential system forms a Schauder basis if and only if the weight squared satisfies the Muckenhoupt $A_2$ condition. However, if the weight violates the $A_2$ condition or if a defect set of frequencies is removed from the system, the basis property is inevitably lost. This naturally leads to the question of whether the system at least preserves exactness under such strong perturbations.

\section{Main results}

All limits and estimates involving the weight $w$ are understood up to a set of measure zero. Hence, we will identify $\sup$ and $\operatorname{ess\,sup}$, and $\lim$ and $\operatorname{ess\,lim}$ for the sake of presentation.

\subsection{The case of super-polynomial decay}
Let $w \in L^{\infty}(0, 1)$ be a weight with super-polynomial decay at zero. It appears that the case of a finite defect set $A \subset \Z$ is trivial.

\begin{lemma}\label{th:finite}
  Let $\{e^{i \lambda_n t}\}_{n \in \mathbb{Z}}$ be a Schauder basis in $L^2(0,1)$, and let $\{r_n(t)\}_{n \in \mathbb{Z}}$ be its dual basis. Then the system $\{ w(t) r_n(t) \}_{n \in \Z \setminus A}$ is always complete and never minimal in $L^2(0,1)$ for any finite set $A$ and any weight $w \in L^{\infty}(0,1)$ such that 
$$w(t) = o(t^n) \text{ for any } n \in \N \text{ as } t \to 0.$$
\end{lemma}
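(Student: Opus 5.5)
Since $w\in L^\infty(0,1)$ and $\{r_n\}$ is the dual basis of a Schauder basis, every $wr_n$ lies in $L^2(0,1)$. We treat completeness and non-minimality separately.

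\emph{Completeness.} Suppose $f\in L^2(0,1)$ satisfies $\langle w r_n, f\rangle=0$ for all $n\in\Z\setminus A$. Set $g=\bar w f\in L^2(0,1)$; then $\langle r_n, g\rangle =0$ for $n\notin A$. The biorthogonal expansion $g=\sum_n \langle g, r_n\rangle e^{i\lambda_n t}$ therefore reduces to a finite sum, so $g(t)=\sum_{n\in A} c_n e^{i\lambda_n t}$ is a finite exponential sum. We now use the decay of $w$: $|g(t)|\le |w(t)||f(t)|$. Write $g$ near $0$ via its Taylor expansion; it is entire, so if $g\not\equiv 0$ it has a zero of some finite order $k$ at $0$, and $|g(t)|\ge c t^k$ on some interval $(0,\delta)$. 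Then $|f(t)|\ge c t^k/|w(t)|$, and $t^k/|w(t)|\to\infty$ faster than any power. In particular $|f|\ge t^{-1}$ near $0$, contradicting $f\in L^2$. Hence $g\equiv 0$, so $f=0$ a.e. because $w>0$ a.e. The only technical point is that ``$w=o(t^n)$'' is understood essentially, which the paper's conventions allow.

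\emph{Non-minimality.} We exhibit one element that lies in the closed span of the others. Fix $m\notin A$ and suppose, for contradiction, that the system is minimal. Then there is $h\in L^2$ with $\langle w r_n,h\rangle=\delta_{nm}$ for $n\notin A$. Put $g=\bar w h$. As above, $g=e^{i\lambda_m t}+\sum_{n\in A}c_ne^{i\lambda_n t}$ for some coefficients, since $\langle g,r_n\rangle=\delta_{nm}$ for $n\notin A$. Conversely, \emph{any} such finite sum $g$ with $g/\bar w\in L^2$ would give a biorthogonal element. The key claim is that two different admissible $g$'s would contradict completeness: their difference $g_1-g_2$ yields $(g_1-g_2)/\bar w\in L^2$ orthogonal to all $wr_n$, $n\notin A$, hence zero by the completeness part.

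This gives uniqueness only, so the real issue is existence. We need the stronger observation that $h=g/\bar w\in L^2$ is impossible. The function $g$ is a nonzero finite exponential sum, since it contains $e^{i\lambda_m t}$ with $\lambda_m\notin\Lambda(A)$ and exponentials with distinct frequencies are linearly independent. It therefore has a zero of finite order $k$ at $0$, and $|h|\ge ct^k/|w|\ge t^{-1}$ near $0$, so $h\notin L^2$. This contradiction shows the system is not minimal; in fact no $wr_m$ is separated from the others.

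The main obstacle is the lower bound $|g(t)|\ge ct^k$ for a nonzero finite exponential sum near $0$. This is immediate from analyticity: $g(t)=a t^k(1+O(t))$ with $a\neq0$. Both parts then reduce to the same estimate.
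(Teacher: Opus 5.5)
Your proposal is correct and follows the paper's route: the paper likewise reduces both properties, via the biorthogonal expansion (its Lemma~\ref{lem:general}, which you carry out inline), to whether $g/w\in L^2(0,1)$ for a nonzero finite exponential sum $g$ with frequencies in $\Lambda(A)$ (respectively $\Lambda(A)\cup\{\lambda_m\}$). It then rules this out because such a $g$ vanishes to finite order at $0$ while $w$ decays superpolynomially; your detour through uniqueness of the biorthogonal element is unnecessary but harmless.
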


\begin{corollary}
        Under the same assumptions, the system $\{ w(t) e^{2\pi int} \}_{n \in \Z \setminus A}$ is always complete and never minimal in $L^2(0,1)$ for any finite set $A$.
\end{corollary}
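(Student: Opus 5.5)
The plan is to obtain the Corollary as a direct specialization of Lemma \ref{th:finite}, so the only real work is identifying the dual system. I will take $\lambda_n = 2\pi n$, $n \in \Z$. The system $\{e^{2\pi i n t}\}_{n \in \Z}$ is an orthonormal basis of $L^2(0,1)$, and in particular a Schauder basis, so the hypothesis on the exponential system in Lemma \ref{th:finite} holds.

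Next I compute its biorthogonal system. For an orthonormal basis the biorthogonal system is the basis itself. Concretely, with $\langle f, g\rangle = \int_0^1 f\overline{g}\,dt$ we have
\[
\langle e^{2\pi i n t}, e^{2\pi i m t}\rangle = \int_0^1 e^{2\pi i (n-m)t}\,dt = \delta_{m,n},
\]
so $r_m(t) = e^{2\pi i m t}$. Uniqueness of the biorthogonal system for an exact system, recalled in the Introduction, guarantees this is the system $\{r_n\}$ appearing in Lemma \ref{th:finite}. With a convention linear in the second argument one would get $r_m = e^{-2\pi i m t}$ instead. In that case the weighted system is $\{w(t)e^{2\pi i n t}\}_{n \in \Z\setminus(-A)}$, and since $-A$ is again finite the conclusion is unaffected.

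Applying Lemma \ref{th:finite} to the same weight $w \in L^\infty(0,1)$ with $w(t) = o(t^n)$ for all $n \in \N$ then shows that $\{w(t)e^{2\pi i n t}\}_{n \in \Z \setminus A}$ is complete and not minimal in $L^2(0,1)$ for every finite $A$. No step is a genuine obstacle. The only point needing care is matching the inner-product convention when identifying $r_n$, and the reflection $A \mapsto -A$ shows that the finiteness of $A$ makes this harmless.
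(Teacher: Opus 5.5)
Your proposal is correct and matches the paper, which leaves the Corollary unproved as the specialization of Lemma \ref{th:finite} to the orthonormal basis $\lambda_n = 2\pi n$, whose biorthogonal system is the basis itself. One small slip in your aside: an inner product linear in the second argument still gives $r_m = e^{2\pi i m t}$, since an orthonormal basis is self-biorthogonal under either sesquilinear convention and $e^{-2\pi i m t}$ would arise only from a bilinear pairing; your $A \mapsto -A$ remark makes this harmless anyway.
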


Now consider the case of an infinite defect set $A = \{m_k \}_{k \in \Z} \subset \mathbb{Z}$. We define $\Lambda(A) := \{\lambda_k \mid k \in A\}$.
\begin{definition}
An unbounded set $\Lambda \subset\mathbb R$ is called two-sided lacunary (or simply lacunary) if it admits a unique monotone enumeration
\[
 \ldots <\lambda_{-2}<\lambda_{-1} < 0\leq \lambda_1<\lambda_2<\ldots
\]
for which
\[
 1<m^+=\liminf_{j\to + \infty}\frac{\lambda_{j + 1}}{\lambda_j}<+\infty
 \qquad \text{and} \qquad
 1<m^-=\liminf_{j\to - \infty}\frac{|\lambda_{j - 1}|}{|\lambda_j|}<+\infty.
\]
The constants $m^+$ and $m^-$ are called lacunarity constants of $\Lambda$.
\end{definition}
For example, the set $\{2^n \mid n \in \N\} \cup \{-2^n \mid n \in \N\}$ is lacunary with lacunarity constants $m^- = m^+ = 2$. Or main results in case of superpolynomial decay are two following theorems.

\begin{mainth}\label{th:main}
Let $\{e^{i \lambda_n t}\}_{n \in \mathbb{Z}}$ be a complete and minimal system in $L^2(0,1)$ and let $\{r_n(t)\}_{n \in \mathbb{Z}}$ be its biorthogonal system. Let $A \subset \Z$ be such that $\Lambda(A) = \{ \lambda_m \mid m \in A \}$ is lacunary with lacunarity constants $m^{\pm} > 1$. Then the system $\{w r_n\}_{n \in \mathbb{Z} \setminus A}$ is always complete and never minimal in $L^2(0,1)$ for any weight $w \in L^{\infty}(0,1)$ such that 
\begin{equation} \label{eq:weight_dec}
    \lim_{t \to 0} w(t) \cdot \exp\left(  \frac{1+\varepsilon}{2} \left({{1} \over {\ln m^{+}}} + {{1} \over {\ln m^{-}}}\right)\ln^2 t \right) = 0
\end{equation}
for some $\varepsilon>0$.
\end{mainth}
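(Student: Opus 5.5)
The plan is to reduce both completeness and non-minimality to a single uniqueness statement for lacunary exponential series whose sum vanishes very fast at $0$. Write $\Lambda(A)$ for the lacunary part of the spectrum.

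\emph{Step 1 (reduction).} Suppose $h\in L^2(0,1)$ annihilates $w r_n$ for all $n\in\mathbb Z\setminus B$, where $B=A$ (completeness) or $B=A\cup\{n_0\}$ (minimality). Put $g=\overline{w}h$. Since $w\in L^\infty$, we have $g\in L^2(0,1)$, $\langle g,r_n\rangle=0$ for $n\notin B$, and
\[
\int_0^1 |g(t)|^2\,w(t)^{-2}\,dt<\infty .
\]
Using the generating function $F$ of the exact system, the Fourier--Laplace transform $\widehat r_n(z)=F(z)/(F'(\lambda_n)(z-\lambda_n))$ is available. From this I will show that $g$ is represented, at least in the sense of its transform $G(z)=\int_0^1 g(t)e^{-izt}\,dt$, by the formal series $\sum_{m\in B} c_m e^{i\lambda_m t}$ with $c_m=\langle g,r_m\rangle$. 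Concretely, $G/F$ is an entire function of zero exponential type whose poles can only sit at $\lambda_m$, $m\in B$. In the Schauder basis case this is just the expansion of $g$ in the basis. Adjoining one point $\lambda_{n_0}$ does not change the lacunarity constants $m^\pm$, so both cases lead to the same object: an $L^2$ function $g$ given by a lacunary exponential series with spectrum $\Lambda_B$ and lacunarity constants $m^\pm$.

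\emph{Step 2 (uniqueness lemma).} Let $g$ be such a lacunary series on $(0,1)$, and assume $|g|$ is small near $0$ in the averaged sense
\[
\int_0 |g|^2 \exp\Bigl((1+\varepsilon)\bigl(\tfrac1{\ln m^+}+\tfrac1{\ln m^-}\bigr)\ln^2 t\Bigr)\,dt<\infty .
\]
I would prove that then $g\equiv 0$. This is the main obstacle.

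The approach is a scale-by-scale analysis. Fix $t\sim\delta$ and split the spectrum into frequencies with $|\lambda|\ll 1/\delta$ and those with $|\lambda|\gtrsim 1/\delta$.
\begin{enumerate}
\item By lacunarity there are only about $\ln(1/\delta)/\ln m^+ + \ln(1/\delta)/\ln m^-$ frequencies below $1/\delta$.
\item The low-frequency block behaves like an exponential polynomial with that many terms. A Tur\'an/Nazarov-type lemma shows such a block cannot be smaller than $\delta^{C\cdot\#\text{terms}}$ relative to its size on a fixed interval.
\item The total exponent is then $\exp\bigl(-\tfrac12(\tfrac1{\ln m^+}+\tfrac1{\ln m^-})\ln^2\delta\,(1+o(1))\bigr)$, which is exactly the threshold in \eqref{eq:weight_dec}.
\item The high-frequency tail is handled by averaging over an interval of length $\sim\delta$. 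Lacunary (Hadamard-gap) frequencies are almost orthogonal there (Sidon/Zygmund-type estimates), so the tail cannot cancel the low block.
\end{enumerate}
Letting $\delta\to0$ and using the extra factor $(1+\varepsilon)$, I will show successively that the coefficients $c_m$ vanish, starting from the smallest $|\lambda_m|$. The delicate point is making the Tur\'an-type constant sharp enough to produce precisely $\tfrac12\ln^2$ times the sum of the reciprocal logarithms. Here I would first try Nazarov's version of the Tur\'an lemma, combined with an induction on dyadic scales $\delta_k=(m^\pm)^{-k}$.

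\emph{Step 3 (conclusion).} In the completeness case, Step~2 gives $g=0$. Since $w>0$ almost everywhere, this forces $h=0$, so the system is complete. For non-minimality, suppose a biorthogonal element $h$ to $w r_{n_0}$ existed. Then $\langle g,r_{n_0}\rangle=1$, yet Step~2 applied with $B=A\cup\{n_0\}$ gives $g\equiv0$, a contradiction. Hence no $w r_{n_0}$ with $n_0\notin A$ has a biorthogonal element, and the system is never minimal.
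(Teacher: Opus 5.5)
Your architecture matches the paper's: you reduce to a uniqueness theorem for lacunary exponential series, and you get non-minimality by adjoining one index to $A$. Your Step~3 is the paper's argument. The genuine gap is Step~1. The theorem assumes only that $\{e^{i\lambda_n t}\}$ is complete and minimal. In that generality the implication
\[
\langle g,r_n\rangle=0\ \ (n\notin B)\ \Longrightarrow\ g\in\clspan\{e^{i\lambda_m t}\}_{m\in B}
\]
is equivalent to completeness of the mixed system $\{e^{i\lambda_m t}\}_{m\in B}\cup\{r_n\}_{n\notin B}$, i.e.\ to hereditary completeness for this partition. For general exact exponential systems this is known only up to a possible one-dimensional defect (Baranov--Belov--Borichev). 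The paper obtains it from Theorem~\ref{BBB:th}, which needs $D_+(\Lambda(\Z\setminus B))>0$; this comes from Cartwright--Levinson together with the $O(\log r)$ counting function of the lacunary part (Lemma~\ref{lem:hereditary}).

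Your generating-function mechanism does not provide this, and it is incorrect as stated. For $G(z)=\int_0^1 g(t)e^{-izt}\,dt$ one has $G(\lambda_n)=\langle g,e^{i\lambda_n t}\rangle$. So the poles of $G/F$ detect non-orthogonality to the \emph{exponentials}, not the coefficients $\langle g,r_n\rangle$. For instance, $g=e^{i\lambda_{m_0}t}$ with $m_0\in B$ satisfies your hypothesis. Yet $G(\lambda_n)=\int_0^1 e^{i(\lambda_{m_0}-\lambda_n)t}\,dt$ vanishes only when $\lambda_n-\lambda_{m_0}\in 2\pi\Z\setminus\{0\}$, so for a general exact system $G/F$ has poles off $\Lambda(B)$. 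Your reduction is fine in the Schauder-basis case. Even there you still need the Ingham argument of Lemma~\ref{lem:corollary_ing}: $\ell^2$-convergent coefficients and convergence of the extensions in $L^2_{\mathrm{loc}}(\R)$. That is what puts $g$ in the form to which a uniqueness theorem applies.

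Step~2 is exactly Belov's uniqueness theorem (Theorem~\ref{belov:th_8} with Lemma~\ref{belov:lem_21}). The paper simply invokes it, after Lemma~\ref{lem:main} converts $f/w\in L^2$ into $\omega^{(1)}_{\widetilde f}(t)\le\sqrt{M/t}\,\sup_{(0,t)}w$ and absorbs the factor $t^{-1/2}$ by shrinking $\varepsilon$. Your sketch would not reach the stated threshold. Put $K=\frac1{\ln m^+}+\frac1{\ln m^-}$. Nazarov's Tur\'an lemma comparing $(0,\delta)$ with $(0,1)$ costs $(C/\delta)^{n}$, with up to $n\approx K\ln\frac1\delta$ frequencies below $1/\delta$. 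That is $\exp\bigl((1+o(1))K\ln^2\delta\bigr)$, twice the exponent allowed by \eqref{eq:weight_dec}.

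The factor $\frac12$ appears only if a frequency $\mu$ with $0<|\mu|\le\Delta$ is charged $\ln(\Delta/|\mu|)$ instead of $\ln\Delta$. After shifting so that $0\in\Lambda(B)$, $\sum_{0<|\mu|\le\Delta}\ln(\Delta/|\mu|)=\int_0^\Delta Y_0(t)\,t^{-1}\,dt\le(1+o(1))\frac K2\ln^2\Delta$, which is precisely the quantity in \eqref{ineq:B}. Likewise, your claim that the high-frequency tail cannot cancel the low block on an interval of length $\delta$ is asserted, not proved. In \eqref{ineq:B} the admissible length is governed by the tail through $\int_\Delta^\infty Y_0(t)\,t^{-2}\,dt$, which for $\{\pm m^k\}$ is of order $\Delta^{-1}\ln\Delta$ rather than $\Delta^{-1}$. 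Theorem~\ref{th:contr} shows the constant $\frac K2$ is sharp up to the factor $1+\varepsilon$, so losing a factor of two is not cosmetic. You should either cite Belov's theorem or supply an argument genuinely finer than Nazarov's lemma.
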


The condition on decay is essential. The following theorem shows that \eqref{eq:weight_dec} is sharp on the exponential scale.

\begin{mainth}\label{th:contr}
For every integer $m \ge 2$ there exists a set $A \subset \Z$ such that $\Lambda(A) = 2\pi A$ is lacunary with lacunarity constants $m^\pm = m \in \N_{> 1}$, and there exists a weight $w \in L^{\infty}(0,1)$ such that 
$$\lim_{t \to 0^+} w(t) \cdot \exp\left(  \frac{1}{\ln m} \ln^2 t \right) = 0,$$
but the system $\{ w(t) e^{2 \pi i n t}  \}_{n \in \Z \setminus A}$ is not complete in $L^2(0,1)$.
\end{mainth}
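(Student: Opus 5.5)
Incompleteness of $\{w e^{2\pi i n t}\}_{n\in\Z\setminus A}$ means there is a nonzero $g\in L^2(0,1)$ with $\int_0^1 w(t)\overline{g(t)}e^{-2\pi i n t}\,dt=0$ for all $n\notin A$. In other words, $h:=w\bar g$ is an $L^1$ function whose Fourier coefficients are supported in $A$. So we construct a lacunary $A$ and a lacunary trigonometric series $h(t)=\sum_{k\in A} c_k e^{2\pi i k t}$ first, and then read off $w$ and $g$ from it. We take $A=\{m^j\}_{j\ge 1}\cup\{-m^j\}_{j\ge1}$, so $\Lambda(A)=2\pi A$ has lacunarity constants $m^\pm=m$. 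For $h$ we take a real series $h(t)=\sum_{j\ge1} a_j\cos(2\pi m^j t)$ with $a_j>0$ decaying very fast, e.g.\ $a_j=m^{-\beta j^2}$ with a suitable $\beta$. Then $h$ is real-analytic-like, smooth and absolutely convergent with all derivatives, and all its coefficients lie in $A$.

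The key point is that $h$ can be made to vanish to super-polynomial order at $0$ in the sense of being small, at a controlled rate. Instead of cosines we use a combination adapted to $0$: consider $h(t)=\sum_j a_j(1-\cos(2\pi m^j t))$. This is not admissible because of the constant term, so instead we use $\sin$ terms, $h(t)=\sum_j a_j \sin(2\pi m^j t)$, which are odd, together with a lower bound on $|h|$. More robustly, we choose $g\equiv 1$ away from $0$ and define $w:=|h|$ (with $g=\operatorname{sgn} h$), so that $w\bar g = h$. This needs $w>0$ a.e., which holds since a nonzero lacunary series vanishes only on a null set (it is real-analytic-type away from trivial cases; it suffices that $h\not\equiv0$ is a nonzero $L^1$ function whose zero set is null, which follows for lacunary series by Zygmund's theorem that a lacunary series vanishing on a set of positive measure is identically zero). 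We also need $g\in L^2$ and $w\in L^\infty$, both of which are trivial. The remaining requirement is the decay $|h(t)|\exp(\ln^2 t/\ln m)\to0$ as $t\to0^+$.

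The main obstacle is exactly this decay estimate, and the coefficients must be tuned to it. For $t\in[m^{-J-1},m^{-J}]$ we split $h=\sum_{j\le J}+\sum_{j>J}$. The tail is at most $\sum_{j>J}a_j\approx a_{J+1}$. The head cannot simply be small termwise, so we design $h$ so that its Taylor expansion at $0$ cancels, using a finite-difference type construction: we replace single terms by blocks whose low-order moments vanish. Alternatively, and this is what we try first, we use a Blaschke/Riesz-product-type choice. We take $h(t)=\prod_{j\ge1}\sin(\pi m^j t)\cdot(\ldots)$, whose expansion is not lacunary in general, so this likely fails. The block approach is therefore primary. At scale $J$ the head contributes roughly $\sum_{j\le J}a_j (m^j t)^{N_j}$, and with $a_j=m^{-j^2/2}$ the tail gives $\exp(-\tfrac{\ln m}{2}J^2)=\exp(-\tfrac{\ln^2 t}{2\ln m})$. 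This is weaker than required, so we take $a_j=m^{-(1+\delta)j^2}$, giving $\exp(-(1+\delta)\ln^2t/\ln m)$, which beats $\exp(-\ln^2 t/\ln m)$. The check that the head is also this small is the delicate computation. Theorem~\ref{th:main} indicates the true threshold is $\tfrac{1}{\ln m}$ in the exponent for $m^+=m^-=m$, so there is room to lose a constant factor, and we expect the head estimate to go through with this choice.
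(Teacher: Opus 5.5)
Your reduction is sound: with $w=|h|$ and $g=\operatorname{sgn}h$ you have $wg=h$, so $g\perp we^{2\pi int}$ for $n\notin A$, exactly as in Lemma~\ref{lem:general}. You can even avoid Zygmund's uniqueness theorem by taking $w=|h|+\eta$ with any positive $\eta$ of the same decay and $g=h/w$.

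The gap is that you never construct $h$, and that construction is the whole content of the theorem. None of your concrete candidates is even small at $0$:
\begin{enumerate}
\item with $a_j>0$ the cosine series has $h(0)=\sum_j a_j>0$;
\item the sine series behaves like $2\pi t\sum_j a_jm^j$;
\item $\sum_j a_j(1-\cos 2\pi m^jt)$ is of order $t^2$.
\end{enumerate}
A smooth, super-polynomially small series $\sum_k c_k\cos(2\pi m^kt)$ must satisfy infinitely many moment conditions: $\sum_k c_kM^{kn}=0$ for all $n\ge1$, where $M=m^2$. That is, the entire function $F(z)=\sum_kc_kz^k$ must vanish at every $M^n$. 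The ``block'' construction that is supposed to produce this cancellation is never specified, and the head estimate is left as an expectation.

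In fact your quantitative plan cannot succeed. Suppose a nonzero $h$ with spectrum in $A$ satisfied $|h(t)|\lesssim\exp(-\frac{1+\delta}{\ln m}\ln^2t)$, which is what $a_j=m^{-(1+\delta)j^2}$ plus your tail/head heuristic aims at. Then $w=\min\{1,\exp(-\frac{1+\delta/2}{\ln m}\ln^2t)\}$ satisfies \eqref{eq:weight_dec} with $m^\pm=m$, while $h/w\in L^2(0,1)\setminus\{0\}$. This contradicts Theorem~\ref{th:main}. The same obstruction is visible directly: $F(M^n)=0$ and Jensen's formula give $\ln\max_{|z|=r}|F|\ge(1+o(1))\frac{\ln^2r}{4\ln m}$, whereas $|c_k|\le m^{-(1+\delta)k^2}$ allows at most $(1+o(1))\frac{\ln^2r}{4(1+\delta)\ln m}$.

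So Theorem~\ref{th:main} leaves no room to lose a constant factor in the exponent; you read it backwards. The coefficients must have the critical size $m^{-k^2+O(k)}$, and any gain over $\exp(-\frac{1}{\ln m}\ln^2t)$ can only come from lower-order terms.

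The missing idea is the paper's canonical product $G(z)=\prod_{n\ge1}(1+z/M^{n-1})=\sum_ka_kz^k$, with $a_k=M^{-k(k-1)/2}\prod_{j=1}^k(1-M^{-j})^{-1}$, and the function $h(t)=G(-M^{-1})-\sum_k(-1)^ka_kM^{-k}\cos(2\pi m^kt)$. The argument then runs as follows.
\begin{enumerate}
\item The zeros $G(-M^{n-1})=0$ kill every derivative of $h$ at $0$.
\item Taylor's formula gives $|h(t)|\le\inf_nG(m^{n-2})|2\pi t|^n/n!$.
\item Using $\ln G(r)=\frac{\ln^2r}{4\ln m}+\frac{\ln r}{2}+O(1)$ and Lemma~\ref{belov:7}, one gets $|h(t)|\lesssim\exp\bigl(-\frac{1}{\ln m}\ln^2(\frac1t\ln\frac1t)+O(\ln\frac1t)\bigr)$.
\end{enumerate}
The resulting factor $\exp(-\frac{2}{\ln m}\ln\frac1t\ln\ln\frac1t)$, which comes from the $n!$, is exactly what beats $w(t)=t\exp(-\frac{1}{\ln m}\ln^2t)$.
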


\subsection{The case of a weight with multiple zeros}
It is known that if $A = \varnothing$, then completeness and minimality of the system $\{ w(t) e^{2\pi i n t} \}_{n \in \Z}$ are equivalent to $w \in L^2$ and $1/w \in L^2$ (see, for example, Lemma \ref{lem:L2_weight}). From now on we assume that the defect set is non-empty.
\begin{definition} \label{def:zero_set}
Let $\mathbb T=\R/\Z$. For a finite multiset $Z=\{t_j\}_{1\leq j\leq N}\subset\mathbb T$ put $D_Z(t)=\prod_{j=1}^N\lr*{e^{2\pi i t}-e^{2\pi i t_j}}$. The multiset $Z$ is called the \emph{set of zeros of the weight $w$} if
\begin{equation}\label{eq:defzero1}
\frac{D_Z}{w}\in L^2(0,1) 
\end{equation}
and for any $t_d\in Z$ we have
\begin{equation}\label{eq:defzero2}
\frac{D_Z(t)}{w(t)\lr*{e^{2\pi i t}-e^{2\pi i t_d}}}\notin L^2(0,1).
\end{equation}
The multiplicity of a distinct zero $t_j$ will be denoted by $a_j$, and $|Z|$ is the sum of these multiplicities. If no such finite multiset exists, the zero set is said to be undefined. We also say that $Z = \varnothing$ if $1/w \in L^2(0, 1)$.
\end{definition}
If such a multiset $Z$ exists, then it is uniquely determined (see Lemma \ref{lem:proving_def}).
\begin{example}
    Note that our definition identifies the zeros at $0$ and $1$. For example, for $w = t(1 - t)$, the corresponding polynomial is $D_Z = e^{2\pi i t} - 1$, and hence $|Z| = 1$.
\end{example}
We use the convention that every partition is written in nondecreasing order.
\begin{mainth} \label{th:fin_schur}
    Consider a weight $w$ and a defect set $A = \{ k_1, \ldots, k_m \}$ with $k_1 < \ldots < k_m$. The system $\{ w(t)  e^{2\pi i n t}  \}_{n \in \Z \setminus A}$ is complete and minimal in $L^2(0,1)$ if and only if 
    \begin{enumerate}
        \item $w \in L^2(0, 1)$,
        \item $w$ has a finite zero multiset $Z=\{t_j\}_{j=1}^{m}$ so that $|Z| = |A|$ (counting multiplicities),
        \item we have 
          \[
        s_{A'} (e^{2 \pi i t_1}, e^{2\pi it_2}, \dots, e^{2\pi it_m}) \neq 0,
    \]
    where $s_{A'}$ is the Schur polynomial corresponding to the partition $A' = ( k_j - k_1 + 2 - j )_{ 1 \leq j \leq m }$.
    \end{enumerate}
\end{mainth}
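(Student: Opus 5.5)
Let $z = e^{2\pi i t}$ and write $e_n(t)=z^n$. The plan is to reduce everything to the Heil--Yoon theorem quoted in the introduction. First observe that $w\in L^2$ is necessary, since $w e_n$ must lie in $L^2(0,1)$. Next, the zero multiset $Z$ of Definition \ref{def:zero_set} is exactly the data $(\alpha_i,n_i)$ of Heil--Yoon. Multiplicity $a_j$ at $t_j$ should correspond to an $L^2$-zero of order $a_j$. Condition (2) of Heil--Yoon, local $L^2$-integrability of $1/w$ away from the zeros, should correspond to \eqref{eq:defzero1} together with \eqref{eq:defzero2}. I will verify this dictionary using Lemma \ref{lem:proving_def} for uniqueness. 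Thus completeness and minimality are equivalent to three things: $w\in L^2$, $Z$ defined with $|Z|\ge |A|$, and the interpolation condition (4) holding for every $m\in\Z\setminus A$.

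The first new step is to show that (4) forces $|Z|=|A|$. Multiplying by $z^{-k_1}$, the polynomial $p_m$ becomes $z^{m-k_1}+\sum_{k\in A} c_{m,k} z^{k-k_1}$. The conditions $p_m^{(j)}(\alpha_i)=0$ say that $p_m$ is divisible by $D_Z(z)=\prod_j (z-z_j)$, where $z_j=e^{2\pi i t_j}$. This is a linear system with $|Z|$ equations and $|A|$ unknowns $c_{m,k}$. If $|Z|>|A|$, I want some $m$ for which the system is unsolvable. Take $m$ very large: $p_m$ would have to vanish at the $|Z|$ zeros counted with multiplicity, but the family $\{z^{k}\}_{k\in A}\cup\{z^m\}$ spans a space of dimension $|A|+1\le|Z|$. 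A Vandermonde-type argument, comparing the generalized Vandermonde for $A\cup\{m\}$ against that for $A\cup\{m'\}$, shows the system fails for all but finitely many $m$. Hence $|Z|=|A|=:M$. Then, for each $m$, the system is square in the unknowns only when $m$ is treated as a free term. Uniqueness of $p_m$ is equivalent to nonsingularity of the $M\times M$ confluent Vandermonde-type matrix
$$V_A=\bigl(\partial^{j}\,z^{k_l-k_1}\big|_{z=z_i}\bigr),$$
with rows indexed by zeros with multiplicity and columns by $l$. This condition is independent of $m$, and if it holds, solvability follows automatically.

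The second step is to identify $\det V_A$. For distinct $z_i$, the bialternant formula gives
$$\det\bigl(z_i^{\mu_l}\bigr)=s_{\lambda}(z_1,\dots,z_M)\prod_{i<j}(z_j-z_i)$$
with exponents $\mu_l=k_l-k_1$. Hence $\lambda_l=\mu_l-(l-1)=k_l-k_1+1-l$. I expect the stated partition $A'=(k_j-k_1+2-j)$ to match this after the paper's indexing convention: the nondecreasing listing and the possible extra shift from working with $z^{k-k_1}$ versus $z^{k-k_1+1}$ need reconciling. I will fix this by checking the case $A=\{0,1\}$. For repeated zeros, divide by the Vandermonde factor and let points coalesce. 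The confluent determinant equals, up to a nonzero constant, $s_\lambda$ at the repeated points times $\prod_{i<j,\,z_i\ne z_j}(z_j-z_i)^{a_ia_j}$. This follows because both sides are limits of divided differences, and $s_\lambda$ is a symmetric polynomial, hence continuous. The distinct points are distinct on the circle, so the product is nonzero, and nonsingularity is equivalent to $s_{A'}(z_1,\dots,z_M)\ne0$.

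I expect the main obstacle to be the first step: rigorously excluding $|Z|>|A|$ via (4) for all $m$, and the confluent-limit bookkeeping. For the former, I will try showing that if $V_{A\cup\{m\}}$ restricted to $M+1$ conditions were singular for infinitely many $m$, then a nonzero exponential-polynomial in $m$ vanishes identically, contradicting distinctness of the $z_i$.
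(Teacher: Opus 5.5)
Your overall route is viable, but the step you single out as the main obstacle is planned around a false claim. When $|Z|>|A|$ it is not true that the interpolation system fails for all but finitely many $m$. Nor is it true that an exponential polynomial in $m$ vanishing at infinitely many integers must vanish identically. Take $Z=\{0,\tfrac12\}$ with simple zeros ($z_1=1$, $z_2=-1$) and $A=\{0\}$. Then $p_m(t)=e^{2\pi i m t}-1$ satisfies the interpolation conditions for every even $m$. The relevant $2\times 2$ determinant $(-1)^m-1$ is a nonzero exponential polynomial with infinitely many integer zeros; a linear recurrence sequence can vanish on a whole arithmetic progression.

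What rescues the argument is the quantifier in condition (4). It demands solvability for \emph{every} $m\in\Z\setminus A$, and solvability is trivial for $m\in A$. Hence every vector $v_n=(n^s z_i^n)_{(i,s)}$, $n\in\Z$, lies in the span of the $|A|$ vectors $v_k$, $k\in A$. If $|Z|>|A|$, take a nonzero functional $\ell$ annihilating that span. Then $n\mapsto\ell(v_n)=\sum_{i,s}c_{i,s}n^s z_i^n$ vanishes on all of $\Z$, in particular at $|Z|$ consecutive integers. This contradicts the nonvanishing of the confluent Vandermonde determinant $\det(v_1,\dots,v_{|Z|})$. This is exactly the paper's Lemma~\ref{lem:power_Z}, and with it your first step goes through.

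Apart from this, your proof differs from the paper's mainly in where the analysis comes from. You import the Heil--Yoon criterion and only translate its data. The paper instead re-derives the criterion in the form ``$w\in L^2$, $|Z|=|A|$, $\det M_{w,A}\neq0$'' directly. It uses the duality Lemma~\ref{lem:L2_weight} and the characterization in Lemma~\ref{lem:proving_def} that $Q/w\in L^2$ if and only if $Q$ vanishes on $Z$ with the required multiplicities. Your route is shorter, but it rests on Heil--Yoon's notion of an $L^2$-zero of order $n$, which this paper never defines. The dictionary you promise to verify is therefore a real obligation, not a formality. The paper's route is self-contained.

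Your confluent step, with divided differences in $z$ and $\partial_z$ rather than in $t$, is a slightly cleaner version of the paper's computation: the within-block Vandermonde factors cancel exactly, without the quotients $(z_{l,q}-z_{l,p})/(x_{l,q}-x_{l,p})$.

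Finally, the off-by-one in the partition should not be settled by checking $A=\{0,1\}$. It is the identity $s_{\lambda+(1,\dots,1)}=z_1\cdots z_M\,s_\lambda$ together with $|z_i|=1$. Your $\lambda$ comes from the exponents $k_l-k_1$. The paper's $A'$ comes from the exponents $k_l-k_1+1$, since it normalizes $k_1=1$ so that all parts are positive. So $A'=\lambda+(1,\dots,1)$, and the two Schur polynomials vanish simultaneously.
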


Theorem~\ref{th:fin_schur} reformulates the interpolation criterion of Heil and Yoon as the nonvanishing of a single explicitly specified Schur polynomial. Once the zero multiset is known, this gives a finite
algebraic condition on the removed frequencies and the zero locations.
It also yields the following sufficient condition in terms of the diameter of the zero multiset.

\begin{definition}
    The diameter of a multiset $Z = \{ t_j \}_{1 \leq j \leq m} \subset \R$ with respect to $\R/\Z$ is defined as the normalized length of the shortest contiguous arc on the unit circle containing all the points $e^{2\pi i t_j}$. Equivalently,
    \[
    \diam Z = \min_{l_1, \dots, l_m \in \Z} \left( \max_{1 \leq i, j \leq m} |(t_i - l_i) - (t_j - l_j)| \right).
    \]
\end{definition}

\begin{maincor} \label{cor:schur}
        Let $w \in L^2(0, 1)$ be a weight with a multiset of zeros $Z = \{t_j\}_{1 \leq j \leq m}$ for some $m \geq 2$, and let $A = \{ k_1 < \ldots < k_m \}$. We use the convention $0^{-1} = +\infty$. If
        \[
        \diam Z < \lr*{ 2 \sum_{j = 1}^{\left\lfloor \frac{m}{2} \right\rfloor} (k_{m - j + 1} - k_j) -  \left\lfloor \frac{m^2}{2} \right\rfloor }^{-1},
        \]
         then the system $\{ w(t)  e^{2\pi i n t}  \}_{n \in \Z \setminus A}$ is complete and minimal.
\end{maincor}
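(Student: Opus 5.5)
The plan is to reduce the corollary to condition (3) of Theorem~\ref{th:fin_schur}. Since $w\in L^2(0,1)$ and $w$ has a zero multiset $Z$ with $|Z|=m=|A|$, conditions (1) and (2) hold by assumption. So it suffices to show that $s_{\lambda}(z_1,\dots,z_m)\neq 0$, where $z_j=e^{2\pi i t_j}$ and $\lambda=A'$.

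\emph{Normalization.} By the definition of $\diam Z$ we may choose lifts $t_j\in\R$ (changing $t_j$ by integers does not change $z_j$) that all lie in an interval $[c-d/2,\,c+d/2]$ with $d=\diam Z$. Recall the combinatorial formula $s_\lambda(z)=\sum_{T}z^{\mu(T)}$, where $T$ runs over semistandard Young tableaux of shape $\lambda$ and $\mu(T)$ is the content of $T$. Every content vector has $|\mu|=|\lambda|$, every coefficient is positive (Kostka numbers), and every content $\mu$ is dominated by $\lambda$ after sorting.

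\emph{Rotation.} Multiply $s_\lambda(z)$ by the unimodular constant $e^{-2\pi i(|\lambda|/m)\sum_j t_j}$. The monomial $z^\mu$ then becomes $e^{2\pi i\phi_\mu}$, where $\phi_\mu=\sum_j a_j t_j$ and $a_j=\mu_j-|\lambda|/m$. Because $\sum_j a_j=0$, we can write $\phi_\mu=\sum_j a_j(t_j-c)$, which gives $|\phi_\mu|\le \tfrac d2\sum_j|a_j|$.

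\emph{Bounding $\sum_j|a_j|$.} The quantity $\sum_j|\mu_j-\bar\mu|$ is at most the sum of the largest $\lfloor m/2\rfloor$ entries of $\mu$ minus the sum of its smallest $\lfloor m/2\rfloor$ entries. This top-half-minus-bottom-half functional is Schur-convex, hence monotone under dominance, so it is bounded by the same quantity for $\lambda$. With $\lambda_j=k_j-k_1+2-j$ we get
\[
S:=\sum_{j=1}^{\lfloor m/2\rfloor}(\lambda_{m-j+1}-\lambda_j)=\sum_{j=1}^{\lfloor m/2\rfloor}(k_{m-j+1}-k_j)-\sum_{j=1}^{p}(m+1-2j),
\qquad p=\lfloor m/2\rfloor .
\]
The second sum equals $p(m-p)=\lfloor m^2/4\rfloor$. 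Moreover $2\lfloor m^2/4\rfloor=\lfloor m^2/2\rfloor$: both sides equal $m^2/2$ for even $m$ and $(m^2-1)/2$ for odd $m$. Hence $2S$ is exactly the expression appearing in the hypothesis of the corollary.

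\emph{Conclusion.} Every phase satisfies $2\pi|\phi_\mu|\le \pi d S<\pi/2$ by the hypothesis $d<(2S)^{-1}$. So every rotated monomial has strictly positive real part. Since all coefficients are positive, the rotated Schur polynomial has positive real part, and therefore $s_\lambda(z)\ne0$. If $S=0$, the $k_j$ are consecutive, $\lambda$ is constant, and $s_\lambda$ is a single monomial, which is never zero on the circle; this is consistent with the convention $0^{-1}=+\infty$. Theorem~\ref{th:fin_schur} then gives completeness and minimality.

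The main obstacle I expect is the bound on $\sum_j|a_j|$. The crude estimate $|\phi_\mu|\le d|\lambda|$ loses too much. The sharp constant requires both the dominance order on contents and the Schur-convexity of the top-half-minus-bottom-half functional, and this step should be checked carefully.
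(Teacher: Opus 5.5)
There is a genuine gap, and it is the step you flagged. The claim $\sum_j|\mu_j-\bar\mu|\le\sum_{j=1}^{p}(\mu_{(m-j+1)}-\mu_{(j)})$ goes the wrong way. For any $c\in\R$, pairing the $j$-th smallest entry with the $j$-th largest and applying the triangle inequality gives $\sum_j|\mu_j-c|\ge\sum_{j=1}^{p}(\mu_{(m-j+1)}-\mu_{(j)})$. Equality holds only when $c$ is a median, and the mean need not be one.

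The failure is in your choice of rotation, not just in the estimate. Take $A=\{0,1,5\}$. Then $\lambda=(1,1,4)$, $\bar\lambda=2$, $\sum_j|\lambda_j-\bar\lambda|=4>3=S$, and the hypothesis reads $d<1/6$. Since $s_{(1,1,4)}(z_1,z_2,z_3)=z_1z_2z_3\,h_3(z_1,z_2,z_3)$, the monomial $z_1z_2z_3^4$ appears with coefficient $1$. Take $t=(0,\varepsilon,d)$ with $1/8<d<1/6$ and $\varepsilon>0$ small. Your rotated angle for this monomial is $2\pi\phi_\mu=2\pi(2d-\varepsilon)>\pi/2$, so its real part is negative. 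Your bound $|\phi_\mu|\le\frac d2\sum_j|a_j|$ is attained using only $\sum_ja_j=0$. Hence centering at $\frac{|\lambda|}{m}\sum_jt_j$ can only prove the corollary under the weaker hypothesis $d<\bigl(2\sum_j|\lambda_j-\bar\lambda|\bigr)^{-1}$. That matches the stated bound only when $\bar\lambda$ is a median of $\lambda$, for example when $m=2$.

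The repair, which is the paper's argument, is to rotate by the midpoint of the range of the phases rather than by the mean. Sort the $t_j$ increasingly; this is harmless since the set of contents is permutation invariant. Your dominance statement, equivalently Lemma~\ref{lem:conv} (each content lies in the convex hull of the permutations of $\lambda$), together with the rearrangement inequality, shows that $\Phi(\mu)=\sum_j\mu_jt_j\in[B_2,B_1]$ for every content. Here $B_1=\sum_jt_j\lambda_j$ and $B_2=\sum_jt_j\lambda_{m-j+1}$. Pairing $j$ with $m-j+1$ gives $B_1-B_2=\sum_{j=1}^{p}(t_{m-j+1}-t_j)(\lambda_{m-j+1}-\lambda_j)\le dS<\tfrac12$. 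After multiplying by $e^{-\pi i(B_1+B_2)}$, every phase lies in $(-\tfrac14,\tfrac14)$, and your positivity argument then goes through. The paper writes this as $\theta_j=t_j-x$ with $x=(B_1+B_2)/(2|\lambda|)$. Your computation of $2S$ and your treatment of $S=0$ are correct.
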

\begin{remark}
    For $|Z| = |A| = 1$, we have $\diam Z = 0$, so the completeness and minimality of the system $\{ w(t) e^{2\pi i n t}\}_{n \in \Z \setminus A}$ follow immediately from Theorem \ref{th:fin_schur}. Thus, this trivial case is omitted. 
\end{remark}

Using this corollary, one can recover Heil and Yoon's result.

\begin{example}
    Let $|A| = m$. If $w(t) = t^m$, then $t_1 = \ldots = t_m = 0$. Hence $\diam Z = 0$ and the condition in Corollary \ref{cor:schur} holds trivially. This recovers Theorem 4 in \cite{Heil}.
\end{example}

The Corollary \ref{cor:schur} allows us to answer the questing for a different distribution of zeros (see Section 8 for details).
\begin{example}
    Let $w \in L^2(0,1)$ be a weight. If $A = \{ a + j \}_{0 \leq j \leq m - 1}$ for some $a \in \Z$, then $2 \sum_{j = 1}^{\left\lfloor \frac{m}{2} \right\rfloor} (k_{m - j + 1} - k_j) -  \left\lfloor \frac{m^2}{2}\right\rfloor = 0$ and the system $\{ w(t)  e^{2\pi i n t}  \}_{n \in \Z \setminus A}$ is complete and minimal if $|Z| = |A|$ (counting multiplicities).
\end{example}
 The uniform bound in Corollary \ref{cor:schur} provides a sufficient, but generally not necessary, geometric condition. For specific defect sets, the exact Schur-polynomial criterion in Theorem \ref{th:fin_schur} yields sharper thresholds. For instance, for the defect set $A=\{0, 1, 3\}$ the system is exact if and only if $\diam Z < 2/3$, which strictly improves upon the uniform guarantee $\diam Z < 1/2$. Further examples and detailed comparisons are deferred to Section 8.

 \subsection{Usage of Large Language Models} The main idea, all the constructions and proofs belongs solely to authors. In early steps of the research, the authors used Google Gemini 3.1 Pro as an exploratory aid in surveying the literature. During the preparation of the paper, the authors used ChatGPT 6.0 Astra and Claude Opus 5.0 to check for grammatical errors and typos and to check the correctness of the proof.

\section{Preliminaries}
In this section we list the basic definitions and facts that we use. We denote the floor function by $\lfloor x \rfloor = \max \{z \in \Z \mid z \leq x\}$, and the fractional part of $x \in \R$ by $\{x\} = x - \lfloor x \rfloor$. We normalize the Fourier transform as follows:
\[
(\mathcal Fg)(z)={{1} \over {\sqrt{2\pi}}} \int_{-\pi}^{\pi}g(t)e^{itz} dt.
\]
Let us consider the Paley--Wiener space $\pw = \mathcal{F} L^2(-\pi,\pi)$ with reproducing kernels $K_\lambda$ and denote its biorthogonal system by $\{ g_\lambda \}_{\lambda \in \Lambda}$, following the notation introduced in \cite{BBB}. Recall that the upper density of the set $\Lambda$ is given by
$$
D_+(\Lambda) = \limsup_{r\to\infty} \frac{n_r(\Lambda)}{2r},
$$
where $n_r(\Lambda) =|\{ {\lambda\in\Lambda \mid |\lambda|\leq r}\}|$.
\begin{theorem} [Theorem 1.2 in \cite{BBB}] \label{BBB:th}
Let $\Lambda \subset \mathbb C$ be such that the system $\{K_\lambda\}_{\lambda\in \Lambda}$ is complete and minimal in $\pw$ and let the partition $\Lambda = \Lambda_1 \cup \Lambda_2$ satisfy $D_+(\Lambda_1) >0$.
Then the system $\{g_\lambda\}_{\lambda\in \Lambda_1} \cup
\{K_\lambda\}_{\lambda\in \Lambda_2}$ is complete in $\pw$. 
\end{theorem}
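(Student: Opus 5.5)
Since this is the cited Theorem 1.2 of \cite{BBB}, I would follow the generating-function approach of Baranov--Belov--Borichev. Let $G$ be the generating function of $\Lambda$. Because $\{K_\lambda\}_{\lambda\in\Lambda}$ is exact in $\pw$, $G$ is an entire function of exponential type $\pi$ with simple zeros exactly on $\Lambda$, $G\notin\pw$, and $G(z)/(z-\lambda)\in\pw$ for every $\lambda\in\Lambda$. The biorthogonal system is then $g_\lambda(z)=G(z)/(G'(\lambda)(z-\lambda))$. Factor $G=G_1G_2$ with zero sets $\Lambda_1$ and $\Lambda_2$, respectively.

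To prove completeness, I would take $h\in\pw$ orthogonal to the whole mixed system and show $h=0$. The two orthogonality conditions translate as follows.
\begin{enumerate}
\item Orthogonality to $K_\lambda$ for $\lambda\in\Lambda_2$ means $h(\lambda)=0$ on $\Lambda_2$. Hence $h=G_2S_1$ for some entire $S_1$.
\item Orthogonality to $g_\lambda$ for $\lambda\in\Lambda_1$ means the Cauchy-type transform
\[
\Psi(z)=\Big\langle \frac{G(\cdot)}{\cdot-z},\,h\Big\rangle ,
\]
which I would make analytic in $z$ by taking $\bar z$ where conjugation is needed, vanishes on $\Lambda_1$. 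The function $\Psi$ is entire, because the singularity cancels at the zeros of $G$. It has exponential type, and its restrictions off the real line are controlled by the Cauchy transform of $|G|^2\,d\mu$-type data. Hence $\Psi=G_1S_2$.
\end{enumerate}

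The key step is to link $h$ and $\Psi$. Expanding $G(w)/(w-z)$ via interpolation at the points of $\Lambda$ gives an identity of the form
\[
\frac{\Psi(z)}{G(z)}=\sum_{\lambda\in\Lambda}\frac{\overline{h(\lambda)}}{G'(\lambda)(z-\lambda)}+(\text{possibly a polynomial term}),
\]
and the terms with $\lambda\in\Lambda_2$ vanish. This yields a functional equation relating $S_1$ and $S_2$:
\[
\frac{S_2(z)}{G_2(z)}=\sum_{\lambda\in\Lambda_1}\frac{\overline{G_2(\lambda)S_1(\lambda)}}{G'(\lambda)(z-\lambda)} .
\]
Here $h\in\pw$ makes the coefficients square-summable after the natural normalization.

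Next I would obtain growth information. Both $S_1$ and $S_2$ are in the Cartwright class, and I would estimate them along the imaginary axis. Since $|G(iy)|\asymp e^{\pi|y|}$ up to polynomial factors, the indicator of $G$ is additive over the factors $G_1,G_2$ (Krein/Cartwright). The Cauchy-type representation then forces the sum of the types of $S_1$ and $S_2$ to be at most the type deficit coming from $G_1$ and $G_2$.

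The main obstacle, where $D_+(\Lambda_1)>0$ must enter, is the final step. Multiplying the two relations gives an entire function like $S_1S_2^{\ast}$, where $S_2^{\ast}(z)=\overline{S_2(\bar z)}$. I would show this function has type $0$ and decays polynomially on $i\mathbb R$, by comparing $|G_1(iy)|$ with $e^{\pi c|y|}$. Here $c>0$ should come from the positive upper density of $\Lambda_1$ via Levinson--Cartwright zero-counting, and I expect obtaining this lower bound to be the delicate part. Phragmén--Lindelöf would then force the product to vanish identically.

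If $S_2\equiv0$, the representation forces $h=0$ on $\Lambda_1$ as well, so $h$ vanishes on all of $\Lambda$, and completeness of $\{K_\lambda\}$ gives $h=0$. If $S_1\equiv0$, then $h=0$ directly. The density hypothesis is exactly what rules out a nonzero polynomial or type-zero remainder, which is what would break the argument when $\Lambda_1$ is sparse.
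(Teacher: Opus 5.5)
The paper gives no proof of this statement; it is quoted from \cite{BBB}. Your sketch therefore has to stand on its own, and it has a genuine gap at its central step: the identity linking $\Psi$ to $h$ is false. Since $\Psi=G_1S_2$, the quotient $\Psi/G=S_2/G_2$ has poles only on $\Lambda_2$. Its residue at $\lambda$ is $\Psi(\lambda)/G'(\lambda)=\langle g_\lambda,h\rangle$, which is the biorthogonal coefficient of $h$, not the sample $\overline{h(\lambda)}/G'(\lambda)$. After you discard the terms on $\Lambda_2$, your right-hand side has poles only on $\Lambda_1$. So your functional equation for $S_2/G_2$ has poles on $\Lambda_2$ on one side and on $\Lambda_1$ on the other, which is impossible unless both sides are entire. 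The formula already fails for $\Lambda=\Z$, $G(z)=\sin\pi z$. There $\langle g_n,h\rangle=\overline{h(n)}$, while you predict the residue $\overline{h(n)}/(\pi(-1)^n)$, and the discrepancy has poles, so it is not a polynomial. The samples $h(\lambda)$ and the coefficients $\langle g_\lambda,h\rangle$ are dual coordinates of $h$, and expanding over $\Lambda$ itself gives no relation between them.

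A bridge that does work is to compute inner products through an orthonormal basis of reproducing kernels, for example $\bigl\{\frac{\sin\pi(z-t)}{\pi(z-t)}\bigr\}_{t\in\Z+\alpha}$ with $\alpha$ chosen so that $(\Z+\alpha)\cap\Lambda=\varnothing$. Then $\langle g_\lambda,h\rangle=\frac{1}{G'(\lambda)}\sum_t\frac{G(t)\overline{h(t)}}{t-\lambda}$. Consequently, $h$ is orthogonal to the mixed system if and only if $h=G_2S_1$ and
\[
\sum_{t\in\Z+\alpha}\frac{G(t)\overline{h(t)}}{z-t}=\frac{G_1(z)S_2(z)}{\sin\pi(z-\alpha)}
\]
for some entire $S_1,S_2$. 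Comparing residues at $t=n+\alpha$ gives $S_1(t)S_2(t)=\pi(-1)^n|h(t)|^2$.

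So the relevant product is $S_1S_2$, not $S_1S_2^{\ast}$. The function $S_1S_2/\sin\pi(z-\alpha)$ has the nonnegative residues $|h(t)|^2$, of total mass $\|h\|^2$. Showing that the product vanishes is therefore equivalent to the conclusion $h=0$. Your endgame gives no mechanism for this.

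The additivity of indicators over $G=G_1G_2$ requires $G_1,G_2$ to be of Cartwright class, and that fails for general partitions. Take $\Lambda=\Z$ and $\Lambda_1$ the positive integers in $\bigcup_k[10^{2k},10^{2k+1})$. Then $D_+(\Lambda_1)>0$, but $\sum_{\lambda\in\Lambda_1,\,\lambda<r}1/\lambda\to\infty$, so by Lindel\"of's theorem no function of exponential type has zero set $\Lambda_1$. Hence neither your type count for $S_1,S_2$ nor a Cartwright--Levinson bound $|G_1(iy)|\gtrsim e^{\pi c|y|}$ is available, and the claimed zero type of the product is asserted rather than derived.

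You correctly identify the final step as the place where $D_+(\Lambda_1)>0$ must enter, but the proposal contains no idea that actually uses it. The branch $S_2\equiv 0$ also rests on the false identity. With the correct identity that branch is immediate, since $S_2\equiv0$ forces $h=0$ on $\Z+\alpha$.
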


\begin{theorem}[Ingham inequality, see Theorems 1, 2 in \cite{Ingham}] \label{th:Ing}
    Let a monotone sequence $\{\lambda_n\}_{n \in \Z} \subset \R$ be such that $\lambda_{ n + 1 } - \lambda_n  \geq \gamma > 0 $ for all $n \in \Z$. Then
    \begin{enumerate}
        \item For all $T > \frac{\pi}{\gamma}$ there exists a constant $C_1 = C_1(\gamma, T)> 0$ such that for any finite sequence $\{ a_n \}_{n \in \Z}$ we have the inequality:
    \[
    C_1 \sum\limits_{n \in \Z} |a_n|^2 \leq \left\|  \sum\limits_{n \in \Z} a_n e^{i \lambda_n t}  \right\|^2_{L^2(-T, T)}.
    \]
    \item For all $T > 0$ there exists a constant $C_2 = C_2(\gamma, T)> 0$ such that for any finite sequence $\{ a_n \}_{n \in \Z}$ we have the inequality:
    \[
    \left\|  \sum\limits_{n \in \Z} a_n e^{i \lambda_n t}  \right\|^2_{L^2(-T, T)} \leq C_2 \sum\limits_{n \in \Z} |a_n|^2.
    \]
    \end{enumerate}
\end{theorem}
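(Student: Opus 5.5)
This is Ingham's classical inequality, quoted from \cite{Ingham}. If we wanted to reprove it, the plan would be the standard weighted-kernel argument. Write $f(t)=\sum_n a_n e^{i\lambda_n t}$ and fix a nonnegative even function $k$ supported on $[-T,T]$. Expanding the square gives
\[
\int_{\R} k(t)\,|f(t)|^2\,dt=\sum_{n,m} a_n\overline{a_m}\,\widehat k(\lambda_n-\lambda_m),
\qquad \widehat k(x)=\int_{\R} k(t)e^{ixt}\,dt .
\]
Both inequalities then reduce to Schur-type estimates on the matrix $\bigl(\widehat k(\lambda_n-\lambda_m)\bigr)_{n,m}$. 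The tool that controls the off-diagonal entries is the separation: $|\lambda_n-\lambda_m|\ge \gamma|n-m|$, which follows from monotonicity and $\lambda_{n+1}-\lambda_n\ge\gamma$.

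For part (1) I would take Ingham's kernel $k(t)=\cos\bigl(\pi t/(2T)\bigr)\chi_{[-T,T]}(t)$. Its transform is explicit:
\[
\widehat k(x)=\frac{4\pi T\cos(Tx)}{\pi^2-4T^2x^2}.
\]
Hence $\widehat k(0)=4T/\pi$, and $|\widehat k(x)|\le 4\pi T/(4T^2x^2-\pi^2)$ for $|x|>\pi/(2T)$. By $|a_n\overline{a_m}|\le\frac12(|a_n|^2+|a_m|^2)$, the off-diagonal part is at most $\sup_n\sum_{m\ne n}|\widehat k(\lambda_n-\lambda_m)|\cdot\sum|a_n|^2$, and by separation this sup is bounded by
\[
\sum_{m\ne n}\frac{4\pi T}{4T^2\gamma^2(n-m)^2-\pi^2}=\sum_{j\ge1}\frac{8\pi T}{4T^2\gamma^2j^2-\pi^2}.
\]
The main step, and the one where the sharp threshold $T>\pi/\gamma$ appears, is to check that this is strictly smaller than $\widehat k(0)=4T/\pi$. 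Put $s=\pi/(T\gamma)<1$. Using the identity $\sum_{j\ge1}\frac{2}{4j^2-1}=1$ together with the monotonicity of $\sum_j \frac{1}{4j^2-s^2}$ in $s$, the sum equals $\frac{4T}{\pi}\cdot s^2\sum_j\frac{2}{4j^2-s^2}<\frac{4T}{\pi}$. This yields $\int k|f|^2\ge c\sum|a_n|^2$ with $c>0$, and since $0\le k\le 1$ on $[-T,T]$ we get $\|f\|^2_{L^2(-T,T)}\ge c\sum|a_n|^2$.

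For part (2) no threshold is needed. Fix any small $T_0>0$ and use the kernel $k_0(t)=\cos\bigl(\pi t/(4T_0)\bigr)\chi_{[-2T_0,2T_0]}$, which satisfies $k_0\ge 1/\sqrt2$ on $[-T_0,T_0]$. The transform obeys $|\widehat k_0(x)|\le C(T_0)/(1+x^2)$. Separation then makes $\sup_n\sum_m|\widehat k_0(\lambda_n-\lambda_m)|\le C(T_0)\bigl(1+\sum_{j\ge1}(\gamma j)^{-2}\bigr)<\infty$, so the same Schur bound gives $\|f\|^2_{L^2(-T_0,T_0)}\le C\sum|a_n|^2$.

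To pass to an arbitrary interval $[-T,T]$, cover it by finitely many translates $[c-T_0,c+T_0]$. Translating $t\mapsto t+c$ replaces $a_n$ by $a_ne^{i\lambda_n c}$, which does not change $\sum|a_n|^2$, so summing the local bounds gives $C_2(\gamma,T)$. The only delicate point in the whole argument is the sharp numerical comparison in part (1); everything else is routine.
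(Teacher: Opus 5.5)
Your argument is correct. The paper gives no proof of its own and simply quotes Ingham; what you have written is essentially Ingham's original proof via the weight $\cos\bigl(\pi t/(2T)\bigr)\chi_{[-T,T]}$, and you check the sharp comparison $s^2\sum_{j\ge1}\frac{2}{4j^2-s^2}<1$ for $s=\pi/(T\gamma)<1$ correctly. One minor point: the covering-by-translates step in part (2) is harmless but unnecessary, because your bound for $k_0$ works for every $T_0>0$, so you can take $T_0=T$ directly.
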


\begin{definition}\label{def:omega}
Let $p>0$. For $f \in L_{loc}^p(\R)$, we define $\omega_f^{(p)}: [0, + \infty) \to [0, + \infty)$ as follows:
    \[
    \omega_f^{(p)} (t) := \inf\limits_{J} \lr*{  \frac{1}{|J|}  \int\limits_J |f(x)|^p dx }^{\frac{1}{p}},
    \]
    where $J$ is any bounded subinterval of $\R$ such that $|J| \geq t$ and $|J| > 0$.
\end{definition}

\begin{definition}
    Let $\Lambda \subset \R$ be a set without finite limit points. For $\lambda \in \Lambda$ and $t \geq 0$ define
    \[
    Y_{\lambda}(t) = |(\Lambda \setminus \{ \lambda \} - \lambda)\cap [-t, t]|.
    \]
\end{definition}

To formulate the theorems proved by Belov in \cite{Belov}, we denote by \eqref{ineq:A} and \eqref{ineq:B} the following relations:
\begin{equation}
    \int\limits_0^{+ \infty} \frac{Y_\lambda(t)}{t^2}dt < +\infty \tag{A} \label{ineq:A}
\end{equation}
and
\begin{equation}
    \liminf_{\Delta \to +\infty} \omega\lr*{ \pi (1 + \gamma) \int\limits_\Delta^{+ \infty} \frac{Y_\lambda(t)}{t^2}dt } \exp \lr*{ \int\limits_0^{\Delta} \frac{Y_\lambda(t)}{t}dt } = 0 \tag{B} \label{ineq:B},
\end{equation}
where $\omega:\R_{\geq 0} \to \R_{\geq 0}$ is non-decreasing function, $\gamma > 0, \ \Lambda \subset \R$ and $\lambda \in \Lambda$.
\begin{theorem} [Theorem 8, \cite{Belov}] \label{belov:th_8}
Consider a function $f \in L_{loc}^1(\R)$ and a set $\Lambda \subset \R$ with no finite limit points, and suppose that for some $\lambda_0 \in \Lambda$ inequality \eqref{ineq:A} holds. Also let a sequence of finite sums $f_j(x) = \sum\limits_{\lambda \in \Lambda} c_{\lambda}^{(j)} e^{i \lambda x}$ converge to $f$ in $L_{loc}^1(\R)$. If condition \eqref{ineq:B} holds with $\lambda=\lambda_0$ for $\omega = \omega^{(1)}_f$ and some $\gamma > 0$, then for all $\lambda \in \Lambda$ we have $c_{\lambda}^{(j)} \to 0$ as $j \to +\infty$.
\end{theorem}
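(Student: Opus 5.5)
The strategy is to test the approximating sums $f_j$ against a compactly supported function $h$ that annihilates every exponential in the spectrum except one. Fix $\lambda_0$ and $\Delta>0$, and put
\[
\ell_\Delta=\pi(1+\gamma)\int_\Delta^{+\infty}\frac{Y_{\lambda_0}(t)}{t^2}\,dt .
\]
I want a bounded function $h_\Delta$ with the following properties:
\begin{enumerate}
\item it is supported on an interval of length at most $\ell_\Delta$;
\item its Fourier transform $\widehat h_\Delta(z)=\int h_\Delta(x)e^{-izx}dx$ vanishes on $\Lambda\setminus\{\lambda_0\}$ and satisfies $\widehat h_\Delta(\lambda_0)=1$;
\item $\|h_\Delta\|_\infty\le C_\gamma\,\ell_\Delta^{-1}\exp\lr*{\int_0^\Delta Y_{\lambda_0}(t)\,t^{-1}dt}$.
\end{enumerate}

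I would build $\widehat h_\Delta$ as a product of two factors. The first is the finite polynomial factor
\[
P_\Delta(z)=\prod_{\lambda\in\Lambda\setminus\{\lambda_0\},\,|\lambda-\lambda_0|\le \Delta}\lr*{1-\frac{z-\lambda_0}{\lambda-\lambda_0}},
\]
which kills the near points. The second is an entire function $G_\Delta$ of exponential type at most $\ell_\Delta/2$ that vanishes on the far points $|\lambda-\lambda_0|>\Delta$. Condition \eqref{ineq:A} makes the canonical product over the far points converge. Its logarithm of modulus on the real line is controlled by $\int_\Delta^\infty Y_{\lambda_0}(t)t^{-2}dt$, and the factor $(1+\gamma)$ leaves room to multiply by a Beurling--Malliavin/Paley--Wiener type multiplier of small type. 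That multiplier makes the product square integrable and rapidly decaying on $\R$, so the product is the transform of a bounded function supported on an interval of length at most $\ell_\Delta$ (Paley--Wiener). The factor $P_\Delta$ has to be tamed as well: on the relevant region $|P_\Delta(x)|^{-1}$ near $\lambda_0$, and the growth of $P_\Delta$, are both governed by $\exp\lr*{\int_0^\Delta Y_{\lambda_0}/t\,dt}$, since $\log\prod_{|\lambda-\lambda_0|\le\Delta}|\lambda-\lambda_0|^{-1}$ is expressed through $\int_0^\Delta Y_{\lambda_0}/t\,dt$. After dividing by a harmless damping factor, this yields the sup bound in the third property via $\|h\|_\infty\le\|\widehat h\|_{L^1}$.

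Granting $h_\Delta$, the estimate for the coefficient is short. For any translate $h_\Delta(\cdot-x_0)$, biorthogonality gives $\langle f_j,h_\Delta(\cdot-x_0)\rangle=c^{(j)}_{\lambda_0}e^{i\lambda_0x_0}$. I choose $x_0$ so that the support lies on an interval $J$ with $|J|\ge\ell_\Delta$ and $\frac1{|J|}\int_J|f|\le 2\,\omega^{(1)}_f(\ell_\Delta)$; enlarging the support of $h_\Delta$ to $J$ costs nothing. Since $f_j\to f$ in $L^1(J)$, this gives
\[
\limsup_j|c^{(j)}_{\lambda_0}|\le \|h_\Delta\|_\infty\int_J|f| \le 2C_\gamma\,\omega^{(1)}_f(\ell_\Delta)\exp\lr*{\int_0^\Delta\frac{Y_{\lambda_0}(t)}{t}dt}.
\]
Taking $\liminf$ over $\Delta\to\infty$ and using \eqref{ineq:B} shows $c^{(j)}_{\lambda_0}\to0$. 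Note that $\omega^{(1)}_f$ is non-decreasing, so it is an admissible $\omega$.

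To pass to an arbitrary $\lambda\in\Lambda$, I would not redo \eqref{ineq:B} for $\lambda$. Instead I build $h$ for $\lambda$ from the one for $\lambda_0$: multiply $\widehat h_\Delta$ by $(z-\lambda)^{-1}(z-\lambda_0)$ and normalize at $\lambda$. This swaps which point is not annihilated, leaves the type unchanged, and loses only a factor polynomial in $\Delta$ in the sup norm. The extra term $\langle f_j,\cdot\rangle$ contributed by $\lambda_0$ is $O(|c^{(j)}_{\lambda_0}|\,\|h\|)$, which already tends to $0$.

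The main obstacle is the polynomial loss in $\Delta$. It must be absorbed, and I would use the slack in $\gamma$ for this. Applying the argument with $\gamma/2$ in the construction produces a type saving of order $\gamma\int_\Delta^\infty Y_{\lambda_0}/t^2\,dt$, and this should be traded for decay of the multiplier that compensates a factor $\Delta^{O(1)}$. Verifying this trade quantitatively, together with the sharp real-line bounds on the far canonical product, is where the real work lies. If the trade fails, I would instead absorb the loss by a monotonicity argument on $\omega$ along the liminf sequence.
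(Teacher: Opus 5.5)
The paper only quotes this theorem from Belov's work and gives no proof, so your proposal has to stand on its own. The duality framework is right, but there is a genuine gap: the whole content of the theorem is your third property with a constant independent of $\Delta$, and your sketch does not deliver it. The Beurling--Malliavin multiplier theorem is qualitative. It gives no control, uniform in $\Delta$, of the value at $\lambda_0$ against the size on $\R$, let alone one matching $\exp\int_0^\Delta Y_{\lambda_0}(t)t^{-1}\,dt$.

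Your budgeting is also off. You spend the whole type $\ell_\Delta/2$ on the far factor and leave only the $\gamma$-slack to tame $P_\Delta$. But $P_\Delta$ has degree $Y_{\lambda_0}(\Delta)$ and grows like $(|x-\lambda_0|/\Delta)^{Y_{\lambda_0}(\Delta)}\exp\int_0^\Delta Y_{\lambda_0}(t)t^{-1}\,dt$ beyond $\Delta$. When $\int_\Delta^\infty Y_{\lambda_0}(t)t^{-2}\,dt$ is comparable to $Y_{\lambda_0}(\Delta)/\Delta$, a damping factor of extra type $\asymp\gamma\int_\Delta^\infty Y_{\lambda_0}(t)t^{-2}\,dt$ can only beat that growth at distances of order $\Delta/\gamma$. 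So you lose a factor like $(C/\gamma)^{Y_{\lambda_0}(\Delta)}$.

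Such losses cannot be repaired afterwards. Condition \eqref{ineq:B} is a bare $\liminf=0$ with no rate, and $\omega$ is an arbitrary non-decreasing function, so every factor growing with $\Delta$ is fatal. The slack in $\gamma$ sits inside the argument of $\omega$ and can only be spent by moving $\Delta$; this does not buy an arbitrary gain such as $\Delta^{-O(1)}$. Monotonicity of $\omega$ gives nothing multiplicative either.

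What is missing is an explicit construction on the physical side that spends the budget $\pi\int_\Delta^\infty Y_{\lambda_0}(t)t^{-2}\,dt$ exactly. Let $\mu$ run over $(\Lambda\setminus\{\lambda_0\})-\lambda_0$. By Tonelli, $\int_\Delta^\infty Y_{\lambda_0}(t)t^{-2}\,dt=\sum_\mu 1/\max(\Delta,|\mu|)$ and $\int_0^\Delta Y_{\lambda_0}(t)t^{-1}\,dt=\sum_{|\mu|\le\Delta}\log(\Delta/|\mu|)$. Let $\nu_\Delta$ be the convolution of two families of measures. The first consists of the probability measures $\frac12(\delta_0+\delta_{\pi/\mu})$, $|\mu|>\Delta$; their infinite convolution converges since $\sum_{|\mu|>\Delta}1/|\mu|<\infty$ by \eqref{ineq:A}. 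The second consists of the measures $(\delta_0-e^{-i\pi\mu/\Delta}\delta_{\pi/\Delta})/(1-e^{-i\pi\mu/\Delta})$, $|\mu|\le\Delta$. Then
\[
\int e^{i\xi x}\,d\nu_\Delta(x)=\prod_{|\mu|>\Delta}\frac{1+e^{i\pi\xi/\mu}}{2}\,\prod_{|\mu|\le\Delta}\frac{1-e^{i\pi(\xi-\mu)/\Delta}}{1-e^{-i\pi\mu/\Delta}}
\]
vanishes at every $\mu$ and equals $1$ at $\xi=0$. The measure $\nu_\Delta$ lives on an interval of length $\pi\sum_\mu 1/\max(\Delta,|\mu|)=\pi\int_\Delta^\infty Y_{\lambda_0}(t)t^{-2}\,dt$. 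Since $|1-e^{-i\pi\mu/\Delta}|\ge 2|\mu|/\Delta$, its total variation is at most $\prod_{|\mu|\le\Delta}\Delta/|\mu|=\exp\int_0^\Delta Y_{\lambda_0}(t)t^{-1}\,dt$.

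Now set $h_\Delta=\nu_\Delta\ast\epsilon^{-1}\chi_{[0,\epsilon]}$ with $\epsilon=\gamma\pi\int_\Delta^\infty Y_{\lambda_0}(t)t^{-2}\,dt$, and use $\int f_j(x)e^{-i\lambda_0x}h_\Delta(x-x_0)\,dx=c^{(j)}_{\lambda_0}$. This gives your third property with $C_\gamma=(1+\gamma)/\gamma$, with no canonical products or multiplier theorems. In your language, each factor $1-\xi/\mu$ of $P_\Delta$ is multiplied by $\sin(\pi(\xi-\mu)/2\Delta)/(\pi(\xi-\mu)/2\Delta)$. That damping is paid for by the amount $\pi Y_{\lambda_0}(\Delta)/\Delta$ that the far factor does not use, not by the $\gamma$-slack.

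For another $\lambda$, replace the factor of $\mu_1=\lambda-\lambda_0$ by $(1-e^{i\pi\xi/\Delta})/(1-e^{i\pi\mu_1/\Delta})$. By \eqref{ineq:A}, the product of the remaining factors at $\mu_1$ stays bounded away from $0$ as $\Delta\to\infty$, so there is no polynomial loss to absorb at all.

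Finally, your estimate $\|h_\Delta\|_\infty\int_J|f|\lesssim\omega^{(1)}_f(\ell_\Delta)\exp\int_0^\Delta Y_{\lambda_0}(t)t^{-1}\,dt$ needs $|J|\lesssim\ell_\Delta$, because $\|h_\Delta\|_\infty\sim\ell_\Delta^{-1}$. To get this, subdivide a near-extremal $J$ into pieces of length in $[\ell_\Delta,2\ell_\Delta)$ and keep one whose average does not exceed that of $J$.
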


\begin{lemma}[Lemma 21, \cite{Belov}] \label{belov:lem_21}
    Let $\Lambda \ni 0$ be lacunary with lacunarity constants $m^\pm>1$ and $\omega:\R_{\geq 0} \to \R_{\geq 0}$ is non-decreasing. If for some $\varepsilon > 0$ we have
    \[
    \liminf_{t \to 0^+} \omega \lr*{t} \exp\left(  \frac{1+\varepsilon}{2} \left({{1} \over {\ln m^{+}}} + {{1} \over {\ln m^{-}}}\right)\ln^2 t \right) = 0,
    \]
    then conditions \eqref{ineq:A} and \eqref{ineq:B} hold with $\lambda = 0$.
\end{lemma}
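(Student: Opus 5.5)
The plan is a direct computation with the counting function $Y_0$ of a lacunary set. Everything needed comes from \emph{upper} bounds on $Y_0$, so the $\liminf$ in the definition of the lacunarity constants is exactly what is required. Fix a small $\delta>0$. Since $\liminf_{j\to+\infty}\lambda_{j+1}/\lambda_j=m^+$, for $j$ large we have $\lambda_{j+1}\geq (m^+-\delta)\lambda_j$, and similarly on the negative side. Hence there is $C=C(\delta)$ with
\[
Y_0(t)\leq \frac{\ln t}{\ln(m^+-\delta)}+\frac{\ln t}{\ln(m^--\delta)}+C=:\kappa_\delta\ln t+C,\qquad t\geq 2 .
\]
Since $\Lambda$ has no finite limit points, $Y_0(t)=0$ for $t$ smaller than the distance from $0$ to $\Lambda\setminus\{0\}$. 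In what follows put $\kappa=\frac1{\ln m^+}+\frac1{\ln m^-}$, so that $\kappa_\delta\to\kappa$ as $\delta\to0$.

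\emph{Condition \eqref{ineq:A}.} The integrand $Y_0(t)/t^2$ vanishes near $0$ and is $O(\ln t/t^2)$ at infinity, so the integral converges.

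\emph{Condition \eqref{ineq:B}: the two factors.} Set $I(\Delta)=\int_\Delta^\infty Y_0(t)t^{-2}\,dt$. By the bound above, $I(\Delta)\leq (\kappa_\delta\ln\Delta+C')/\Delta$. Moreover $I$ is continuous, nonincreasing, and tends to $0$, and it is strictly positive since $\Lambda$ is unbounded. Also
\[
\int_0^\Delta \frac{Y_0(t)}{t}\,dt\leq \frac{\kappa_\delta}{2}\ln^2\Delta+O(\ln\Delta).
\]
Put $s(\Delta)=\pi(1+\gamma)I(\Delta)$. Then $\ln s(\Delta)\leq -\ln\Delta+\ln\ln\Delta+O(1)$. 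Conversely, since $Y_0\geq1$ for large $t$, we have $I(\Delta)\geq c/\Delta$, so $|\ln s(\Delta)|=\ln\Delta\,(1+o(1))$.

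\emph{Condition \eqref{ineq:B}: matching scales.} By hypothesis there is a sequence $t_k\to0^+$ with
\[
\omega(t_k)\exp\Bigl(\tfrac{1+\varepsilon}{2}\kappa\ln^2t_k\Bigr)\to0 .
\]
By continuity and monotonicity of $s$, choose $\Delta_k\to\infty$ with $s(\Delta_k)=t_k$ (or $s(\Delta_k)\leq t_k$, which suffices because $\omega$ is nondecreasing). Then $\omega(s(\Delta_k))\leq\omega(t_k)$. Furthermore $\ln^2\Delta_k=\ln^2t_k\,(1+o(1))$, so
\[
\exp\Bigl(\int_0^{\Delta_k}\frac{Y_0}{t}\,dt\Bigr)\leq \exp\Bigl(\tfrac{\kappa_\delta}{2}(1+o(1))\ln^2 t_k\Bigr)\leq \exp\Bigl(\tfrac{1+\varepsilon}{2}\kappa\ln^2t_k\Bigr)
\]
for large $k$, once $\delta$ is chosen so small that $\kappa_\delta<(1+\varepsilon)\kappa$. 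Multiplying, the expression in \eqref{ineq:B} along $\Delta_k$ tends to $0$, which gives \eqref{ineq:B} for every $\gamma>0$.

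The main obstacle is this last matching step. One must check that the lower-order terms ($\ln\ln\Delta$ in $\ln s$, $O(\ln\Delta)$ in the exponent) and the loss from replacing $m^\pm$ by $m^\pm-\delta$ are all absorbed by the slack $\varepsilon$. This works because all of them are $o(\ln^2 t_k)$, and the constant $\pi(1+\gamma)$ only shifts $\ln s$ by $O(1)$.
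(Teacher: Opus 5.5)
The paper does not prove this lemma. It is quoted from Belov (Lemma 21 there) and used as a black box in the completeness part of Theorem \ref{th:main}, so there is no in-paper argument to compare yours against.

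Your direct verification is correct and self-contained:
\begin{itemize}
\item The bound $Y_0(t)\le\kappa_\delta\ln t+C$ is exactly what the $\liminf$ in the paper's definition of lacunarity provides.
\item Condition \eqref{ineq:A} follows immediately from that bound, since $Y_0$ vanishes near $0$ (as $\Lambda$ has no finite limit points).
\item In the matching step, the inequality that actually carries the argument is $\ln\Delta_k\le(1+o(1))\ln(1/t_k)$. It comes from your upper bound $s(\Delta)\lesssim \ln\Delta/\Delta$, which again rests only on the upper bound for $Y_0$.
\item The lower bound $I(\Delta)\ge c/\Delta$, or simply positivity and continuity of $s$, is needed only to get $\Delta_k\to\infty$.
\end{itemize}

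One small caution concerns your parenthetical remark that $s(\Delta_k)\le t_k$ would suffice. It does not suffice by itself: if $\Delta_k$ were allowed to be arbitrarily large, you would lose the comparison of $\int_0^{\Delta_k}Y_0(t)t^{-1}\,dt$ with $\frac{1+\varepsilon}{2}\kappa\ln^2 t_k$. You need the equality $s(\Delta_k)=t_k$, which continuity and monotonicity of $s$ provide, or at least $\Delta_k$ chosen minimal. Since your main line uses the equality, this is not a gap.

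Finally, your argument gives \eqref{ineq:B} for every $\gamma>0$. That is more than the ``some $\gamma>0$'' required in Theorem \ref{belov:th_8}.
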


\section{Auxiliary Propositions}
To prove Theorem \ref{th:main} and Lemma \ref{th:finite}, we need several important results on the decay of some functions in $L^2(0,1)$.

\begin{lemma} \label{lem:main}
Let $\widetilde{f}$ denote any arbitrary extension of $f \in L^2(0, 1)$ to the real line $\R$ satisfying $\tilde{f} \in L_{loc}^1(\R)$. Suppose there exists $\varepsilon_1>0$ such that
    \[
    \lim_{t\to0^+}w(t)\exp\left(\frac{1+\varepsilon_1}{2}
    \left(\frac1{\ln m^+}+\frac1{\ln m^-}\right)\ln^2t\right)=0.
    \]
    Then for any $f \in L^2(0, 1)$ such that $f/w \in L^2(0, 1)$, there exists $\varepsilon_2>0$ for which
    \[
    \lim_{s\to0^+}\omega_{\widetilde{f}}^{(1)}(s)
    \exp\left(\frac{1+\varepsilon_2}{2}
    \left(\frac1{\ln m^+}+\frac1{\ln m^-}\right)\ln^2s\right)=0.
    \]
\end{lemma}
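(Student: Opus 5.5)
The plan is to bound $\omega^{(1)}_{\widetilde f}(s)$ from above by choosing a single test interval $J=(0,s)$ with $s\in(0,1)$. Since $|J|=s\geq s$, Definition~\ref{def:omega} gives
\[
\omega^{(1)}_{\widetilde f}(s)\leq \frac1s\int_0^s|f(x)|\,dx .
\]
Here we use that $\widetilde f=f$ on $(0,1)$, so the choice of extension plays no role.

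Write $c=\frac12\bigl(\frac1{\ln m^+}+\frac1{\ln m^-}\bigr)>0$ and $g=f/w\in L^2(0,1)$. By Cauchy--Schwarz,
\[
\frac1s\int_0^s|f|\leq \frac1s\Bigl(\int_0^s w^2\Bigr)^{1/2}\|g\|_{L^2(0,1)}.
\]
The hypothesis says that for every $\delta>0$ there is $s_0$ such that $w(x)\leq \delta\exp(-(1+\varepsilon_1)c\ln^2x)$ for a.e.\ $x<s_0$. On $(0,s)$ with $s<\min(s_0,1)$, the function $\ln^2x$ is decreasing in $x$, so $\ln^2 x\geq \ln^2 s$. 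Hence $w(x)\leq \delta \exp(-(1+\varepsilon_1)c\ln^2 s)$ there, and
\[
\Bigl(\int_0^s w^2\Bigr)^{1/2}\leq \delta\, s^{1/2}\exp(-(1+\varepsilon_1)c\ln^2 s).
\]
This yields
\[
\omega^{(1)}_{\widetilde f}(s)\leq \delta\,\|g\|_2\, s^{-1/2}\exp(-(1+\varepsilon_1)c\ln^2 s).
\]

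It remains to absorb the factor $s^{-1/2}=\exp(\tfrac12|\ln s|)$. Choose $\varepsilon_2=\varepsilon_1/2$. Then
\[
\omega^{(1)}_{\widetilde f}(s)\exp((1+\varepsilon_2)c\ln^2 s)\leq \delta\|g\|_2\exp\bigl(\tfrac12|\ln s|-\tfrac{\varepsilon_1}{2}c\ln^2 s\bigr).
\]
The exponent tends to $-\infty$ as $s\to0^+$, because the quadratic term dominates the linear one. So the product tends to $0$, even without using the freedom in $\delta$; one may simply fix $\delta=1$.

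There is no real obstacle here. The only points needing care are that the test interval lies inside $(0,1)$, where $\widetilde f=f$, and that the polynomial loss $s^{-1/2}$ from normalizing by $|J|$ is swallowed by shrinking $\varepsilon_1$ to $\varepsilon_2$. That is exactly why the statement allows a different $\varepsilon_2$.
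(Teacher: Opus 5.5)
Your proof is correct and follows essentially the same route as the paper. Both arguments use the test interval $J=(0,s)$, Cauchy--Schwarz, and the monotone majorant $\exp(-(1+\varepsilon_1)c\ln^2 x)$ of $w$ near $0$, and both absorb the $s^{-1/2}$ loss by taking $\varepsilon_2<\varepsilon_1$. The only cosmetic difference is that you apply Cauchy--Schwarz to $|f/w|\cdot w$ directly, whereas the paper first passes to $\|f\|_{L^2(0,s)}$ and then bounds it by $\sup_{(0,s)}w\cdot\|f/w\|_{L^2(0,1)}$; both routes give the same estimate.
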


\begin{proof}
    Let $ \| f/w \|^2_{L^2(0, 1)} = M < \infty$. In Definition \ref{def:omega} we set $p=1$ and $J = (0, t)$. Then we have
$$
\omega^{(1)}_{\widetilde{f}}(t) \leq \frac{1}{t} \int\limits_{0}^{ t} |\widetilde{f}(x)| dx = \frac{1}{t} \int\limits_{0}^{t} |f(x)|dx \leq t^{-1/2} \lr*{\int\limits_{0}^{t} |f(x)|^2 dx}^{\frac{1}{2}} .
$$
The integral can be estimated as follows:

\[
    \int_0^{t} |f(x)|^2 dx = \int_0^{t} \frac{|f(x)|^2}{w(x)^2} w(x)^2 dx \leq \sup_{x \in (0, t)} w(x)^2 \cdot \int_0^{t} \left|\frac{f(x)}{w(x)}\right|^2 dx \leq M \cdot \sup_{x \in (0, t)} w(x)^2.
\]

Therefore, we obtain
$$
\omega^{(1)}_{\widetilde{f}} (t) \leq \sqrt{\frac{M}{t}} \sup_{x \in (0, t)} w(x).
$$
Let us denote $K = \frac{1}{\ln m^+} + \frac{1}{\ln m^-} > 0$. The condition on decay of $w$ implies that
$$w(x) \leq \exp\left(-\frac{1+\eps_1}{2} K \ln^2 \frac{1}{x}\right)$$
for sufficiently small $x > 0$. Since this majorizing function is continuous and monotonically increasing with respect to $x$, its supremum on the interval $(0, t)$ is attained at the right endpoint. Then we have
\begin{gather*}
    \omega^{(1)}_{\widetilde{f}}(t) \leq \sqrt{\frac{M}{t}} \sup_{x \in (0, t)} w(x) \leq \sqrt{\frac{M}{t}} \sup_{x \in (0, t)} \exp\left(-\frac{1+\eps_1}{2} K \ln^2 \frac{1}{x}\right) \leq \\
    \leq \sqrt{\frac{M}{t}} \exp\left(-\frac{1+\eps_1}{2} K \ln^2 \frac{1}{t}\right) = \sqrt{M} \exp\left( \frac{1}{2}\ln \frac{1}{t} -\frac{1+\eps_1}{2} K \ln^2 \frac{1}{t} \right).
    \end{gather*}
Let $\eps_2$ be such that $0 < \eps_2 < \eps_1$. We obtain 
$$
0 \leq \omega^{(1)}_{\widetilde{f}}(t) \exp\left( \frac{1+\eps_2}{2} K \ln^2 \frac{1}{t} \right) \leq \sqrt{M} \exp\left( \frac{1}{2}\ln \frac{1}{t} -\frac{1+\eps_1}{2} K \ln^2 \frac{1}{t} + \frac{1+\eps_2}{2} K \ln^2 \frac{1}{t} \right)  \leq
$$
$$
\leq \sqrt{M} \exp\left( \frac{1}{2} \ln \frac{1}{t} - \frac{\eps_1 - \eps_2}{2} K \ln^2 \frac{1}{t} \right).
$$
Set $z = \frac{1}{t} \to + \infty$ as $t \to 0^+$. Since $\eps_1 > \eps_2$, we have $\left( \frac{1}{2}\ln z - \frac{\eps_1 - \eps_2}{2} K \ln^2 z \right) \to -\infty$ as $z \to +\infty$. Thus we have
$$\lim_{z \to +\infty} \omega^{(1)}_{\widetilde{f}}(1/z) \exp\left( \frac{1+\eps_2}{2} K \ln^2 z \right) = 0.$$
\end{proof}

\begin{definition}
    Let $\{ e^{i \lambda_n t} \}_{n \in \Z}$ be a Schauder basis. Then the spectrum of the function $f=\sum_{n \in \Z} c_n e^{i \lambda_n t}$ is defined as $\spec(f) = \{ n \mid c_n \neq 0 \}$.
\end{definition}

\begin{lemma}\label{lem:general}
Let $\{e^{i\lambda_nt}\}_{n \in \Z}$ be a Schauder basis in $L^2(0, 1)$ and let $\{r_n\}_{n \in \Z}$ be its dual basis. Fix a set $A \subset \Z$ and an arbitrary weight $w \in L^\infty(0, 1)$. Then
    \begin{enumerate}
        \item $\wb$ is complete in $L^2(0, 1)$ if and only if
        \[
            \frac{f }{ w } \notin L^2(0, 1) \setminus \{0\}
        \]
        for every $f \in L^2(0, 1)$ such that $\spec (f) \subset A$.
        \item $\wb$ is minimal in $L^2(0, 1)$ if and only if for every $n \notin A$ there exists a function $f_n \in L^2(0, 1)$ with $n \in \spec (f_n) \subset A \cup \{ n \}$ and
        \[
            \frac{f_n}{ w } \in L^2(0, 1).
        \]
    \end{enumerate}
\end{lemma}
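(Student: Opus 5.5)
The plan is to reduce both statements to duality in $L^2(0,1)$, using that $w$ is real-valued and bounded, so multiplication by $w$ is a bounded self-adjoint operator. Every $f\in L^2(0,1)$ expands as $f=\sum_n c_n e^{i\lambda_n t}$ with $c_n=\langle f, r_n\rangle$, because $\{e^{i\lambda_n t}\}$ is a Schauder basis with dual basis $\{r_n\}$. Hence $\langle f, r_n\rangle=0$ if and only if $n\notin\spec(f)$.

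\emph{Completeness.} The system $\wb$ is incomplete if and only if some $g\neq 0$ satisfies $\langle g, w r_n\rangle=0$ for all $n\notin A$. Since $w$ is real, $\langle g, wr_n\rangle=\langle wg, r_n\rangle$.

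For the forward direction, set $f=wg\in L^2$. Then $\spec(f)\subset A$. Moreover $f/w=g\in L^2\setminus\{0\}$, and $f\neq 0$ because $w>0$ a.e. and $g\neq0$. This contradicts the hypothesis.

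Conversely, suppose $f\in L^2$ has $\spec(f)\subset A$ and $g:=f/w\in L^2\setminus\{0\}$. Then $\langle g, w r_n\rangle=\langle f, r_n\rangle=0$ for all $n\notin A$, so the system is incomplete. This proves (1).

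\emph{Minimality.} The system $\wb$ is minimal if and only if it has a biorthogonal family, that is, for each $n\notin A$ some $h_n\in L^2$ with $\langle w r_k, h_n\rangle=\delta_{kn}$ for $k\notin A$.

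Given such $h_n$, put $f_n=w h_n\in L^2$ (here $w\in L^\infty$ is used). Then $\langle f_n, r_k\rangle=\overline{\langle w r_k,h_n\rangle}=\delta_{kn}$ for $k\notin A$. So $n\in\spec(f_n)\subset A\cup\{n\}$, and $f_n/w=h_n\in L^2$.

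Conversely, given such $f_n$, normalize it so that its $n$-th coefficient is $1$, which is possible since $n\in\spec(f_n)$. Set $h_n=f_n/w\in L^2$. Then $\langle wr_k,h_n\rangle=\overline{\langle f_n,r_k\rangle}=\delta_{kn}$ for all $k\notin A$, so $\{h_n\}$ is biorthogonal to $\wb$. This proves (2).

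The only delicate points are the following. One is the identification of $\spec(f)$ with the nonvanishing of $\langle f,r_n\rangle$, which is where the Schauder basis property enters. The other is the a.e. positivity of $w$, which guarantees that $f\neq0$ is equivalent to $f/w\neq0$. I do not expect a genuine obstacle; it is a routine Hahn--Banach/duality argument.
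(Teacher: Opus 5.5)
Your proposal is correct and follows essentially the same route as the paper. Both arguments use that $w$ is real and bounded to rewrite $\langle g, w r_n\rangle = \langle wg, r_n\rangle$. This converts the annihilator condition (for completeness) and the biorthogonality condition (for minimality) into spectral conditions on $f = wg$ and $f_n = w h_n$ with respect to the basis expansion. You also state explicitly why $f \neq 0 \iff f/w \neq 0$ (because $w > 0$ a.e.), a point the paper leaves implicit.
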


\begin{proof}
    By definition, $\wb$ is complete if and only if for any $g \in L^2(0, 1)$ the condition $\lrsc*{g, wr_n} = 0$ for all $n \notin A$ implies $g = 0$. Fix an arbitrary function $g \in L^2(0,1)$. Since $w$ is real-valued we obtain $\lrsc*{g, wr_n} = \lrsc*{gw, r_n}$. By hypothesis we have $gw \in L^2(0, 1)$. Then the condition $\lrsc*{g, wr_n} = 0$ for all $n \notin A$ is equivalent to the following equation:
        \[
    g(t)w(t) = \sum\limits_{n \in \Z} \lrsc*{gw, r_n} e^{i \lambda_n t} = \sum\limits_{n \in \Z} \lrsc*{g, wr_n} e^{i \lambda_n t} = \sum\limits_{m \in A} \lrsc*{g, wr_m} e^{i \lambda_m t},
    \]
    where the order of summation determined by the Schauder basis. Thus we obtain
$$g(t) =  \frac{\sum\limits_{ m \in A } c_{m} e^{i \lambda_{m} t} }{ w(t) } = \frac{f(t)}{w(t)} ,$$
where $c_m = \lrsc*{gw, r_m}$ and the equality is understood in the sense of $L^2(0, 1)$. Since the existence of such a $g$ is equivalent to the fact that the right-hand side is in $L^2(0,1) \setminus \{ 0\}$,  the system $\wb$ is complete if and only if for any $f = \sum\limits_{ m \in A } c_{m} e^{i \lambda_{m} t} \in L^2(0, 1)$ the following implication holds:
    \[
    \frac{f }{ w } \in L^2(0, 1)  \Longrightarrow f = 0 \text{ almost everywhere}.
    \]
The minimality of $\wb$ is equivalent to the existence of its biorthogonal system $\{g_n\}_{n \in \Z \setminus A} \subset L^2(0, 1)$. As in the previous step, this is equivalent to requiring that for any $n \notin A$ there exists $g_n \in L^2(0, 1)$ such that 
\[
\lrsc*{wg_n, r_m} = \begin{cases}
    0, & m \notin A \  \text{ and } \ m \neq n, \\
    1, & m \notin A \ \text{ and } \ m = n.
\end{cases}
\]
This is equivalent to $g_n$ having the representation:
    \[
    g_n(t) = \frac{e^{i\lambda_nt} + \sum\limits_{ m \in A } \xi_{n, m} e^{i \lambda_{m} t}}{ w(t) } = \frac{f_n(t)}{w(t)},
    \]
where the fraction on the right-hand side of the equality exists as an element of $L^2(0,1)$.
\end{proof}
This allows us to immediately prove Lemma \ref{th:finite}:
\begin{proof}
        Indeed, any linear combination of exponentials from $\{e^{i \lambda_n t}\}_{n \in \mathbb{Z}}$ has zeros of only finite order. It is obvious that for any non-trivial finitely supported sequence $\{ c_n\}_{n \in \Z}$ one has $\frac{\sum_{n \in \Z} c_n e^{i \lambda_n t} } {w(t)} \notin L^2(0, 1)$, since $w$ has superpolynomial decay at zero.
    Therefore, the system $\wb$ is complete and not minimal for any finite $A \subset \Z$ by Lemma \ref{lem:general}.
\end{proof}
    
\section{Proof of Theorem \ref{th:main}}
We split the proof of the theorem into three steps.

\subsection{Auxiliary Lemmas} First, we need a simple corollary of Theorem \ref{BBB:th}.
\begin{lemma}\label{lem:hereditary}
    Let $\{ e^{i \lambda_n t} \}_{n \in \Z}$ be a complete and minimal system in $L^2(0, 1)$ and let $\{ r_n \}_{n \in \Z}$ be its biorthogonal system. Also let $ A \subset \Z$ be such that $\Lambda(A) = \{ \lambda_n \mid n \in A \} \subset \R$ is lacunary. Then if $\langle f, r_n\rangle = 0$ for all $n \notin A$, then $f \in \clspan \{ e^{i\lambda_n t} \mid n \in A\} $.
\end{lemma}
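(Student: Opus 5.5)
The plan is to transfer the statement to the Paley--Wiener setting, derive it from Theorem~\ref{BBB:th} applied to the partition that keeps reproducing kernels exactly at the lacunary frequencies, and check the density hypothesis.

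\textbf{Transfer to $\pw$.} The affine change of variables $t=(s+\pi)/(2\pi)$ is a unitary map (up to a constant factor) from $L^2(0,1)$ onto $L^2(-\pi,\pi)$. It sends $e^{i\lambda_n t}$ to a unimodular multiple of $e^{i\mu_n s}$ with $\mu_n=\lambda_n/(2\pi)$. Composing with $\mathcal F$, each exponential $e^{i\mu_n s}$ goes to a constant multiple of the reproducing kernel $K_{\mu_n}$ of $\pw$; since $\mu_n\in\R$, no conjugation issue arises. Biorthogonality is preserved under unitary maps, so the images of the $r_n$ are (suitably renormalized) exactly the biorthogonal functions $g_{\mu_n}$. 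Thus $\{K_\mu\}_{\mu\in M}$, $M=\{\mu_n\}$, is complete and minimal in $\pw$, and it suffices to prove the statement there.

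\textbf{Density check.} I would take $\Lambda_2=M(A)=\{\mu_n\mid n\in A\}$ and $\Lambda_1=M\setminus M(A)$. A lacunary set has $n_r=O(\log r)$, so $D_+(M(A))=0$. A complete system of exponentials on an interval of positive length must have positive upper density: by the Beurling--Malliavin theorem the completeness radius is bounded by the upper density, or more crudely, a set of zero upper density is not complete on any interval. Hence $D_+(\Lambda_1)=D_+(M)>0$. Theorem~\ref{BBB:th} then says that the mixed system
\[
\{r_n\}_{n\notin A}\cup\{e^{i\lambda_n t}\}_{n\in A}
\]
is complete in $L^2(0,1)$.

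\textbf{Deducing the lemma (main obstacle).} The step I expect to be hardest is passing from this completeness to the lemma itself. The lemma is a ``hereditary'' statement: a vector whose formal coefficients $\langle f,r_n\rangle$ vanish off $A$ must lie in $V=\clspan\{e^{i\lambda_n t}\mid n\in A\}$. The paper only quotes the mixed-completeness theorem, not the hereditary statement, so a duality argument is needed. I would argue by contradiction. Suppose $\langle f,r_n\rangle=0$ for $n\notin A$ but $f\notin V$. By Hahn--Banach there is $h\perp V$ with $\langle f,h\rangle\neq 0$.

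First I would try to approximate $h$ by finite combinations $\sum_{n\notin A}a_n r_n+\sum_{n\in A}b_n e^{i\lambda_n t}$ from the complete mixed system, and pair both against $f$ and against $h$, using $h\perp e^{i\lambda_n t}$ for $n\in A$ and $f\perp r_n$ for $n\notin A$. This kills the cross terms only partially: the remaining terms $b_n\langle e^{i\lambda_n t},f\rangle$ need not vanish. To close the gap, I would follow the argument in \cite{BBB}, which shows that mixed completeness for a partition forces hereditary completeness with respect to that partition. The mechanism is that the orthogonal complement of the mixed system corresponds to functions vanishing at $\Lambda_2$ with prescribed data on $\Lambda_1$. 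I would rerun that argument for the fixed partition $(\Lambda_1,\Lambda_2)$ rather than for all partitions. If this proves too delicate, the fallback is to invoke directly the hereditary-completeness formulation of Theorem~1.2 in \cite{BBB}: when $D_+(\Lambda_1)>0$, every $f$ with vanishing coefficients on $\Lambda_1$ lies in the closed span of $\{K_\lambda\}_{\lambda\in\Lambda_2}$. Transporting this back through the unitary map gives the lemma.
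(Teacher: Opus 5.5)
Your reduction is the paper's: unitary transfer to $\pw$, then Theorem~\ref{BBB:th} with $\Lambda_2$ the lacunary part. However, the proposal leaves the decisive step unproved. The missing idea is elementary: the two closed spans are orthogonal. Put $V=\clspan\{e^{i\lambda_n t}\mid n\in A\}$ and $N=\clspan\{r_n\mid n\notin A\}$. For $n\in A$ and $m\notin A$, biorthogonality gives $\langle e^{i\lambda_n t},r_m\rangle=\delta_{nm}=0$, so $V\perp N$. An orthogonal sum of closed subspaces is closed, so completeness of the mixed system gives $L^2(0,1)=V\oplus N$, hence $N^\perp=V$. Your hypothesis says precisely $f\in N^\perp$. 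That is the whole proof in the paper.

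Your Hahn--Banach attempt closes the same way. Take $h\perp V$ and a finite combination $p=p_N+p_V$ from the mixed system, with $p_N\in\spann\{r_n\}_{n\notin A}$ and $p_V\in V$. Then $h-p_N\perp p_V$, so $\|p_V\|\le\|h-p\|$. Since $\langle f,p_N\rangle=0$, the terms you could not control combine into $\langle f,p_V\rangle$, and $|\langle f,h\rangle|\le 2\|f\|\,\|h-p\|\to0$. There is no need to rerun the argument of \cite{BBB}. The ``hereditary formulation'' you propose as a fallback is, for a fixed partition, equivalent to mixed completeness exactly by this orthogonality. Citing it without that observation amounts to citing the lemma itself.

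Second, your density justification is false, although its conclusion holds. Theorem~\ref{BBB:th} needs $D_+(\Lambda_1)>0$ with $D_+$ counted on symmetric intervals $[-r,r]$. Zero $D_+$ does not prevent completeness: the Beurling--Malliavin radius is governed by the Beurling--Malliavin density, which can exceed $D_+$. For instance, the integers in $\bigcup_k[2^k,2^k(1+k^{-1/2})]$ have $D_+=0$ but Beurling--Malliavin density $1$. Their exponentials are therefore complete on every interval of length less than $2\pi$, in particular in $L^2(0,1)$. Bounding the radius by the uniform (Beurling) upper density is correct, but positivity of that density does not give $D_+>0$.

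What you actually need is minimality. An exact system has a generating function of Cartwright class vanishing exactly on the spectrum. By the Cartwright--Levinson theorem the spectrum then has a positive density in the symmetric sense, which is what the paper uses. Removing the lacunary part, where $n_r=O(\log r)$, leaves $D_+(\Lambda_1)>0$.
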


\begin{proof}
By the Cartwright--Levinson theorem, with the normalization corresponding to $L^2(0,1)$ we have $\lim_{R\to\infty}\frac{n_R(\Lambda)}{2R}=\frac{1}{2\pi}$, see \cite[Lecture 17, Theorem 1 and  Lecture 18, Theorem 2]{Levin}. Since $\Lambda(A)$ is lacunary, $n_r(\Lambda(A))=O(\log r)$. Consequently, $D_+\lr*{\Lambda(\Z\setminus A)} = 1/(2\pi)>0$. For $x \in (-\pi, \pi)$ define
\[
(Uf)(x) = (2\pi)^{-1/2}f\lr*{ \frac{x + \pi}{2\pi} }.
\]
This is the affine unitary map from $L^2(0, 1)$ onto $L^2(-\pi, \pi)$. We apply Theorem \ref{BBB:th} after the unitary transformation $T = \mathcal F U$ and obtain that the system $\{e^{i\lambda_n t} \mid n\in A\} \cup \{r_n\mid n\notin A\}$ is complete in $L^2(0,1)$.
Let
\[
    M=\clspan \{e^{i\lambda_n t} \mid n\in A\} \qquad \text{and} \qquad N=\clspan \{r_n\mid n\notin A\}.
\]
Biorthogonality gives $M\perp N$, while completeness of the mixed
system gives $L^2(0,1)=M\oplus N$. Then from $f \perp N$ follows $f \in M$. 
\end{proof}

Next we need the following result on the coefficients of a functions with a lacunary spectrum.
\begin{lemma}\label{lem:corollary_ing}
Let $\Lambda=\{\lambda_n\}_{n\in\mathbb Z}\subset\mathbb R$ be a strictly increasing lacunary sequence with lacunarity constants $m^\pm>1$ such that the system $\{ e^{i \lambda t} \}_{\lambda \in \Lambda}$ is minimal. Put $f_j(t)=\sum_{\lambda\in\Lambda}c_{\lambda}^{(j)}e^{i\lambda t}$ with some finite coefficient sequence $\{c_{\lambda}^{(j)}\}_{\lambda\in\Lambda}$. If $f_j\to f \in L^2(0,1)$ for some $f\in L^2(0,1)$, then exists $\{ c_\lambda \} \in \ell^2(\Lambda)$ such that $\{c_{\lambda}^{(j)}\}_{\lambda\in\Lambda} \to \{ c_\lambda \} \in \ell^2(\Lambda)$ as $j \to +\infty$.
\end{lemma}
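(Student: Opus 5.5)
The strategy is to reduce the statement to a Riesz-sequence (Ingham-type) lower bound for lacunary exponentials on a short interval, and then use a Cauchy-sequence argument.

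\textbf{Step 1: gap structure.} A lacunary set has gaps tending to infinity. If $\lambda_{j+1}/\lambda_j \geq q>1$ for large positive $j$, then $\lambda_{j+1}-\lambda_j \geq (q-1)\lambda_j \to \infty$; the negative side is analogous. Hence for any $\gamma>0$ there is a finite set $F\subset\Lambda$ such that the tail $\Lambda\setminus F$, arranged monotonically, has all consecutive gaps at least $\gamma$. We choose $\gamma > 2\pi$, so that $\pi/\gamma < 1/2$.

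\textbf{Step 2: Riesz sequence of the tail.} We apply Theorem~\ref{th:Ing}(1) with $T=1/2$ after translating $(0,1)$ to $(-1/2,1/2)$. The translation multiplies each exponential by a unimodular constant and does not change norms. This gives
\[
C_1\sum_{\lambda\notin F}|a_\lambda|^2\leq \Big\|\sum_{\lambda\notin F} a_\lambda e^{i\lambda t}\Big\|^2_{L^2(0,1)}.
\]
Theorem~\ref{th:Ing}(2) gives the matching upper bound, so $\{e^{i\lambda t}\}_{\lambda\in\Lambda\setminus F}$ is a Riesz sequence. (One could also cite the classical fact that lacunary exponentials form a Riesz sequence on any interval, but Ingham's inequality suffices.)

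\textbf{Step 3: adding back the finite set.} The main obstacle is to upgrade the Riesz bound from $\Lambda\setminus F$ to all of $\Lambda$ using minimality. Let $H=\clspan\{e^{i\lambda t}\}_{\lambda\notin F}$, which is closed. By minimality, no $e^{i\mu t}$ with $\mu\in F$ lies in the closed span of the remaining exponentials.

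We add the elements of $F$ one at a time. Suppose $\{u_k\}$ is a Riesz sequence with closed span $H'$ and $v\notin H'$. Write $v=h+d$ with $h\in H'$ and $d\perp H'$, $d\neq 0$. Then
\[
\|a v+\textstyle\sum b_k u_k\|^2=|a|^2\|d\|^2+\|ah+\textstyle\sum b_ku_k\|^2 .
\]
Bounding $|a|$ and then $\|\sum b_k u_k\|\leq \|ah+\sum b_k u_k\|+|a|\|h\|$ gives a lower bound $c\,(|a|^2+\sum|b_k|^2)$. So $\{u_k\}\cup\{v\}$ is again a Riesz sequence. Iterating over the finitely many elements of $F$ shows that $\{e^{i\lambda t}\}_{\lambda\in\Lambda}$ is a Riesz sequence in $L^2(0,1)$, with some constant $c>0$:
\[
c\sum_\lambda|a_\lambda|^2\leq\Big\|\sum_\lambda a_\lambda e^{i\lambda t}\Big\|^2
\]
for all finite sequences $\{a_\lambda\}$.

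\textbf{Step 4: conclusion.} Apply the inequality of Step 3 to $f_j-f_k$:
\[
c\,\|c^{(j)}-c^{(k)}\|^2_{\ell^2}\leq \|f_j-f_k\|^2_{L^2(0,1)}.
\]
Since $f_j\to f$, the sequence $\{f_j\}$ is Cauchy in $L^2(0,1)$. Therefore the coefficient sequences $\{c^{(j)}_\lambda\}$ form a Cauchy sequence in $\ell^2(\Lambda)$ and converge to some $\{c_\lambda\}\in\ell^2(\Lambda)$. As a by-product, the Riesz bound also gives $f=\sum_\lambda c_\lambda e^{i\lambda t}$.
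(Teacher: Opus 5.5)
Your proof is correct. It uses the same two ingredients as the paper: Ingham's inequality on the tail of $\Lambda$, where the gaps exceed the required threshold, and minimality for the finitely many remaining frequencies. The difference is in how minimality is used.

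The paper never proves a global lower bound. It takes the biorthogonal family $\{R_n\}$ supplied by minimality and notes that each head coefficient $d_n^{(j)}=(F_j,R_n)$, $|n|\le N$, converges. Hence the head sums $G_j$ converge, the tails $H_j=F_j-G_j$ are Cauchy, and Ingham's inequality is applied only to the differences $H_j-H_s$.

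You use minimality instead as positive distance from $e^{i\mu t}$ to the closed span of the other exponentials. With the orthogonal-decomposition step you show that a Riesz sequence remains one after adjoining finitely many vectors outside its closed span. This gives that the whole system $\{e^{i\lambda t}\}_{\lambda\in\Lambda}$ is a Riesz sequence in $L^2(0,1)$, and the Cauchy property of the coefficients follows at once.

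Your route costs an extra (standard) stability lemma. In return it gives a uniform inequality $c\sum|a_\lambda|^2\le\|\sum a_\lambda e^{i\lambda t}\|^2_{L^2(0,1)}$ for all finite sums, which is stronger and reusable. The paper's argument is shorter and extracts only what is needed for the given convergent sequence.

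Your normalization (translating to $(-1/2,1/2)$ with tail gaps $\gamma>2\pi$) and the paper's rescaling $\lambda'_n=\lambda_n/(2\pi)$ on $(-\pi,\pi)$ with tail gaps $>1$ are equivalent.
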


\begin{proof}
Put $\lambda'_n = \frac{\lambda_n}{2\pi}$ and $d_n^{(j)} = (2\pi)^{-1/2} e^{i\lambda_n/2} c_{\lambda_n}^{(j)}$. Then in the sense of $L^2$-norm we have
\[
 Uf_j = \sum_{n\in\mathbb Z} d_n^{(j)} e^{i\lambda'_n t} \longrightarrow  Uf  \quad\text{ as } j \to \infty.
\]
Denote the left-hand side by $F_j$ and the right-hand side by $F$. By minimality, the transformed system has a biorthogonal family $\{R_n\}_{n\in\mathbb Z}$, so $d_n^{(j)} = (F_j,R_n) \to (F,R_n)$ as $j \to \infty$ for every $n$. Since $\lambda'_{n+1}-\lambda'_n \to\infty$ as $|n|\to\infty$,
choose $N$ so that
\[
    \inf_{\substack{n\ne m\\ |n|,|m|>N}}
    |\lambda'_n-\lambda'_m| > 1.
\]
The finite sums $G_j := \sum_{|n|\le N} d_n^{(j)} e^{i\lambda'_n t}$ converge in $L^2(-\pi,\pi)$. Hence $H_j := F_j-G_j$ is Cauchy sequence. By Theorem~\ref{th:Ing}, for some $C>0$ independent of $j$ and $s$ we have
\[
    \sum_{|n|>N} |d_n^{(j)}-d_n^{(s)}|^2 \leq C \|H_j-H_s\|_{L^2(-\pi,\pi)}^2 \longrightarrow 0
    \quad \text{ as } j,s\to\infty.
\]
Together with convergence of the finitely many remaining coordinates,
this shows that $\{d_n^{(j)}\}_{n\in\mathbb Z}$ is Cauchy in
$\ell^2(\mathbb Z)$ and therefore converges to some
$\{d_n\}_{n\in\mathbb Z}\in\ell^2(\mathbb Z)$.
The conclusion follows with
$c_{\lambda_n}=(2\pi)^{1/2}e^{-i\lambda_n/2}d_n$.
\end{proof}

\subsection{Completeness} 
    By multiplying the initial system by $e^{-i\lambda_0 t}$ for some $\lambda_0 \in \Lambda(A)$ we may assume that $0 \in \Lambda(A)$.
    
     Let us fix an arbitrary $h \in L^2(0, 1)$ such that $ \langle h, wr_n \rangle = \langle wh, r_n \rangle = 0$ for any $n \notin A$. To prove completeness we need to show that $h = 0$. This is equivalent to $f = wh = 0$. Using Lemma \ref{lem:hereditary} we obtain that $f \in \clspan \{ e^{i\lambda t} \mid \lambda \in \Lambda(A)\} $. Hence, there exists a sequence of polynomials $f_j(t) = \sum\limits_{\lambda \in \Lambda(A)} c_{\lambda}^{(j)} e^{i \lambda t}$ such that $f_j \to f$ in $L^2(0, 1)$. Since $\{e^{i \lambda t}\}_{\lambda \in \Lambda(A)}$ is minimal, there exists a limit $ \{ c^{(j)}_\lambda \} \to \{ c_\lambda \}$ in $\ell^2(\Lambda(A))$ by Lemma \ref{lem:corollary_ing}. Let $\widetilde{f}_j$ be natural extention of $f_j$ as an entire function: it belongs $L_{loc}^2(\R)$ by definition. Since $\Lambda(A)$ is lacunary, there is $\gamma > 0$ such that $\inf_{n \in \Z}|\lambda_{n + 1} - \lambda_n| > \gamma$ in monotone numeration of $\Lambda(A)$. Then by Theorem \ref{th:Ing} for all $T > 0$ exists $C(T, \gamma)$ such that 
     \[
     \| \widetilde{f}_j - \widetilde{f}_s \|_{L^2(-T, T)} \leq C(T, \gamma) \left\| \left\{ c_{\lambda}^{(j)} - c^{(s)}_{\lambda}\right\}_{\lambda \in \Lambda} \right\|_{\ell^2(\Lambda(A))}
     \xrightarrow[j, s \to + \infty]{} 0.
     \]
     Hence for all $T> 0$ exists a limit $\widetilde{f}_j \to \widetilde{f} $ in $ L^2(-T, T)$, and so in $ L_{loc}^2(\R)$. As a consequence, $\widetilde{f}_j \to \widetilde{f} $ in $L_{loc}^1(\R)$.
     Also $\frac{f}{w} = h \in L^2(0, 1)$. Then we obtain that there exists $\varepsilon_2 > 0$ such that we have
    \[
    \liminf_{t \to 0^+} \omega^{(1)}_{\widetilde{f}} \lr*{ t } \exp\left(  \frac{1+\varepsilon_2}{2} \left({{1} \over {\ln m^{+}}} + {{1} \over {\ln m^{-}}}\right)\ln^2 t \right) = 0
    \]
    by Lemma \ref{lem:main}. Now one can apply Theorem \ref{belov:th_8}. Indeed, relations \eqref{ineq:A} and \eqref{ineq:B} follows by Lemma \ref{belov:lem_21}. Then $c_{\lambda}^{(j)} \to 0$ as $j \to +\infty$. By Lemma \ref{lem:corollary_ing} $\{c_{\lambda}^{(j)}\}_{\lambda \in \Lambda(A)} \to \{c_{\lambda}\}_{\lambda \in \Lambda(A)}$ in $\ell^2(\Lambda(A))$ for some $\{c_{\lambda}\}_{\lambda \in \Lambda(A)}$. Hence, $\{c_{\lambda}\}_{\lambda \in \Lambda(A)} = 0$ in $\ell^2(\Lambda(A))$. Then by Ingham inequality we have
    \[
    \| f_j \|_{L^2(0, 1)} \leq \| \widetilde{f_j} \|_{L^2(-1, 1)} \leq C(1, \gamma) \left\| \left\{c_{\lambda}^{(j)}\right\}_{\lambda \in \Lambda(A)} \right\|_{\ell^2(\Lambda(A))} \to 0 \quad \text{as} \quad j \to +\infty.
    \]
    On the other hand, $f_j \xrightarrow[]{L^2(0, 1)} f$. Hence, $f = 0$.

\subsection{Non-minimality}
Suppose that the system $\{wr_n\}_{n \in \Z \setminus A}$ is minimal. Then there exists $k \in \Z \setminus A$ and a function $h \in L^2(0, 1)$ such that $(h, wr_n) = 0$ for all $n \notin A' = A \cup \{ k \}$, while $(h, wr_k) = 1$. Since the lacunarity constants of $\Lambda(A')$ and the weight condition for $w(t)$ remain identical, we can conclude that $\{w r_n\}_{n \in \Z \setminus A'}$ is complete. This implies $h = 0$, hence $(h, wr_k) = 0$, which is a contradiction.

\section{Proof of Theorem \ref{th:contr}}
The counterexample is based on Belov’s construction \cite[Theorem 19]{Belov}, which we apply to establish the incompleteness of the weighted exponential system. To keep the presentation self-contained, we reproduce the construction and decay estimates needed for this application.

We choose $A=\{\pm m^{k}\}_{k\in\mathbb{N}_0}\cup\{0\}$. Then $2\pi A = \Lambda(A)$, and the lacunarity constants are equal to $m^{-}=m^{+}=m>1$. We aim to construct a bounded function $h \in L^{2}(0,1) \setminus \{ 0 \}$ such that:
\begin{enumerate}
    \item $h=c_0+\sum\limits_{k=0}^{+\infty}a_{-k}e^{-2 \pi i m^{k}t}+\sum\limits_{k=0}^{+\infty}b_{k}e^{2\pi im^{k}t}$ in the sense of $L^2$-norm,
    \item $h(t)=o\left(\exp\left(-\frac{1}{\ln m}\ln^{2}t + \ln t\right)\right)$ as $t\rightarrow0$.
\end{enumerate}
Taking $w=\exp\lr*{-\frac{1}{\ln m} \ln^2 t + \ln t}$, we then obtain that $\frac{h}{w}\in L^{2}(0,1)$. This implies incompleteness of the system $\{w(t)e^{2\pi int}\}_{n\in\mathbb{Z}\setminus A}$ by Lemma \ref{lem:general}.

\subsection{Canonical product}

Let $M = m^2$. We define an entire function $G$ as follows:
\[
  G(z) = \prod\limits_{n = 1}^\infty \left(   1 + \frac{z}{M^{n - 1}}   \right)  = 1 + \sum\limits_{k = 1}^\infty \left(  \sum\limits_{ 0 < n_1 <\dots < n_k }  \frac{1}{M^{n_1 - 1} \dots M^{n_k - 1}} \right) z^k.
\]
Set $a_0 = 1$ and define the coefficients $a_k$ from the relation $G(z) = 1 + \sum_{k = 1}^\infty a_k z^k$. Then $a_k$ is given by
$$
        a_k = \sum\limits_{ 0 < n_1 <\dots < n_k }  M^{1 - n_1} \dots M^{1 - n_k} = \sum\limits_{\tau_1, \dots, \tau_n = 1}^\infty M^{1 - \tau_1}M^{1 - \tau_1-\tau_2}\dots M^{1 - \sum\limits_{j = 1}^k \tau_j} = $$
$$=M^k \prod\limits_{j = 1}^k   \sum\limits_{\tau_j = 1}^\infty M^{ -(k + 1 - j) \tau_j } = M^k \prod\limits_{j = 1}^k \frac{1}{M^j - 1} = M^{-\frac{ k(k -1) }{ 2 } } \prod\limits_{j = 1}^k \frac{1}{1 - M^{-j}}.
$$
We have $G( -M^{n - 1} ) = 0 $ for $n \geq 1$ and $G\left( -\frac{1}{M} \right) = \prod\limits_{n = 1}^\infty (1 - M^{-n}) $ by the definition of $G$. Hence,
\begin{equation}
\label{eq:Gmod}
        \sum\limits_{k = 0}^\infty (-1)^k a_k M^{k(n - 1)}=0 \quad \text{for all } n \geq 1 \quad \text{ and } \quad
\sum\limits_{k = 0}^\infty (-1)^k a_k M^{-k} = \prod\limits_{n = 1}^\infty (1 - M^{-n}) .
\end{equation}
Since $G$ is positive on $(0, +\infty)$, the function $\ln G(r) = \sum\limits_{n = 1}^\infty g(M^{n - 1})$ is well-defined on $(0, +\infty)$, where $g(t) = \ln\left(1 + \frac{r}{t} \right)$. Let $Y_0(t)$ be counting function of the sequence $\{ M^n \}_{n \in \N_0}$. We have
$$Y_0(t) =  \begin{cases}
1 + \left\lfloor \frac{\ln t}{\ln M} \right\rfloor, & t \geq 1,\\
    0, & \text{otherwise}.
\end{cases}$$
The function $g:(0, +\infty) \to \R$ is absolutely continuous on each finite interval not containing $0$ and $g(t) \to 0$ monotonically as $t \to +\infty$. Then we can use the Stieltjes integral:
$$
        \ln G(r) =  \int\limits_{0}^{+\infty}  -g'(t) Y_0(t) dt = \int\limits_{0}^{+\infty} \frac{r}{t(r + t)} Y_0(t) dt =$$
$$= \int\limits_{1}^{+\infty} \frac{r}{t(r + t)} \left( \frac{1}{2} + \frac{\ln t}{\ln M} \right) dt + \int\limits_{1}^{+\infty} \frac{r}{t(r + t)} \left( \frac{1}{2} - \left\{ \frac{\ln t}{\ln M} \right\} \right) dt = I_1 + I_2.
$$
By a change of variable, we obtain:
\[
I_2 = \int\limits_{0}^{+\infty} \frac{r \ln M}{ (r + M^t)} \left( \frac{1}{2} - \left\{ t \right\} \right) dt.
\]
Then $0 \leq I_2 \leq \frac{r \ln M}{8(r + 1)} \leq \frac{\ln M}{8}$. This is a standard estimate, which can be found, for example, in Lemma 6 of \cite{Belov}. Now let us estimate $I_1$:
$$ I_1 = \int\limits_{1}^{+\infty} \frac{r}{t(r + t)} \left( \frac{1}{2} + \frac{\ln t}{\ln M} \right) dt =  \int\limits_{1/r}^{+\infty} \frac{1 }{u (1 + u  )} \left( \frac{1}{2} + \frac{\ln r}{\ln M} + \frac{\ln u}{\ln M}  \right) du = $$
$$      =  \lr*{ \frac{1}{2} + \frac{\ln r}{\ln M} } \ln \lr*{r + 1} + \frac{1}{\ln M} \int\limits_{1/r}^{+ \infty} \frac{\ln u}{u(1 + u )} du = $$
$$  =  \lr*{ \frac{1}{2} + \frac{\ln r}{\ln M} } \ln \lr*{r + 1} +   \frac{1}{\ln M} \int\limits_1^{+ \infty} \frac{\ln u}{u(1 + u )} du  - \frac{\ln^2 r}{2 \ln M} - \frac{1}{\ln M} \int\limits_{1/r}^{1} \frac{\ln u}{1 + u} du.
$$
Combining these estimates, for $r \ge 1/m$ we have:
$$
    \left| \ln G(r) - \frac{\ln^2 r}{2\ln M }  - \frac{\ln r}{2}  \right| =  \left| I_1+ I_2 - \frac{\ln^2 r}{2\ln M }  - \frac{\ln r}{2}  \right| =$$
$$= \left|I_2 + \left( \frac{1}{2} + \frac{\ln r}{\ln M} \right) \ln\left(1 + \frac{1}{r}\right) + \frac{1}{\ln M} \int_1^{+\infty} \frac{\ln u}{u(1+u)} du - \frac{1}{\ln M} \int_{1/r}^1 \frac{\ln u}{1+u} du  \right| \leq $$
$$\leq \frac{\ln M}{8} + \frac{1}{\ln M} \max_{r \ge 1/m} \frac{\ln r}{r} + C_1 + \frac{\pi^2}{12 \ln M}.
$$
Therefore, there exists $C>0$ such that for all $r \ge 1/m$ the following inequality holds:
\begin{equation}\label{ineq:logG} 
|\ln G(r)| \leq  \left| \frac{\ln^2 r}{4\ln m }  + \frac{\ln r}{2} + C \right|.
\end{equation}

\subsection{Construction of the function}

Now we introduce the function $h$ as follows:
$$h(t)=G(-M^{-1})-\sum_{k=0}^{\infty}(-1)^{k}a_{k}M^{-k}\cos(2\pi m^{k}t).$$
Observe that $a_0 = 1$ and so $h \neq 0$. The function $h$ can be differentiated term-by-term, since $a_k = O\lr*{M^{-Ck^2}}$ and differentiation increases the coefficients by at most $O(m^k)$. Consequently, $h \in C^\infty[0, 1] \subset L^2(0, 1)$ and we have
$$h^{(n)}(t) = \begin{cases} \pm\sum\limits_{k=0}^{\infty}(-1)^{k}a_{k}M^{-k}(2\pi m^{k})^{2p+1}\sin(2\pi m^{k}t) , & n = 2p+1,\\
-(-1)^{p}\sum\limits_{k=0}^{\infty}(-1)^{k}a_{k}M^{-k}(2\pi m^{k})^{2p}\cos(2\pi m^{k}t), & n = 2p>0.
\end{cases}$$
Now using \eqref{eq:Gmod} we obtain $h^{(n)}(0)=0$ for all $n > 0$ and $h(0)=G(-M^{-1})-\sum_{k=0}^{\infty}(-1)^{k}a_{k}M^{-k}=0$. Hence, for any $n \in \N$ there exists $\theta = \theta(n, t) \in (0, 1)$ such that
$$h(t) = \frac{t^n}{n!} h^{(n)}(\theta t)  \quad \text{ for all } t \in \R.$$
We estimate the derivative:
$$|h^{(n)}(t)|\leq \begin{cases} (2\pi)^n \sum\limits_{k=0}^{\infty}a_{k}M^{k(p-1/2)}, & n = 2p+1\\
(2\pi)^n \sum\limits_{k=0}^{\infty}a_{k}M^{k(p-1)}, & n = 2p,
\end{cases} = (2\pi)^n G(m^{n-2}).$$

Combining everything together, we obtain:
\begin{equation}\label{ineq:h}
   |h(t)| \leq  \inf\limits_{n \in \N} \frac{G(m^{n-2})}{n!} |2\pi t|^n \text{ for all } t \in \R.
\end{equation}

\subsection{Rate of decay}
Let us use the estimate \eqref{ineq:logG}. There exists $C_1 > 0$ such that
$$
|\ln G(m^{n-2})| \leq \left| \frac{\ln^2(m^{n-2})}{4 \ln m} + \frac{\ln(m^{n-2})}{2} + C \right| = \left| \frac{\ln m}{4} n^2 - \frac{\ln m}{2} n + C_1 \right|.
$$

Using Stirling's formula, we have
$$
\frac{1}{n!} \le \frac{1}{\sqrt{2\pi}} n^{-n-1/2} e^n = \frac{1}{\sqrt{2\pi}} \exp\lr*{-n \ln n - \frac{1}{2} \ln n + n}.
$$

Finally, using the inequality \eqref{ineq:h} and the relation $\inf_n e^{x_n} = e^{\inf_n x_n}$, we obtain
$$
|h(t)| \le C_2 \exp \inf_{n \in \N} \lr*{ \frac{\ln m}{4} n^2 - \frac{1}{2} \ln n - n \ln n - n \lr*{ \frac{\ln m}{2} - \ln|2\pi t| - 1 } }.
$$
\begin{lemma}[Lemma 7 in \cite{Belov}] \label{belov:7}
    For any $\alpha > 0$, the function
    \[
    \varphi(u) = \inf\limits_{n \geq 1} \left(  \alpha n^2 - n \ln n - nu - \frac{1}{2} \ln n  \right)
    \]
    is defined on $\R$, and there exist constants $t_0, C$, depending only on $\alpha$, such that for all $t > t_0:$
    \[
    \left|  \varphi(t) + \frac{t^2}{4\alpha} + \frac{t \ln t}{2\alpha} - \frac{\ln 2\alpha}{2 \alpha} t + \frac{\ln^2 t}{4\alpha} + \left(  \frac{1}{2} + \frac{1}{2\alpha} - \frac{\ln 2\alpha}{2\alpha}  \right) \ln t   \right| \leq C.
    \]
\end{lemma}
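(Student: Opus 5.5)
The plan is to reduce Lemma~\ref{belov:7} to an elementary asymptotic minimization of a smooth function of a real variable, followed by control of the discretization error from restricting to integers. Set
\[
\psi(x)=\alpha x^2-x\ln x-xu-\tfrac12\ln x, \qquad x\ge 1 .
\]
Then $\varphi(u)=\inf_{n\ge1}\psi(n)$. For fixed $u$, $\psi(x)\to+\infty$ as $x\to\infty$, so the infimum is attained. Hence $\varphi$ is finite on $\R$, which gives the first claim.

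For large $u$ we locate the real minimizer $x_*=x_*(u)$ from
\[
\psi'(x)=2\alpha x-\ln x-1-u-\tfrac1{2x}=0 .
\]
Since $\psi''(x)=2\alpha-1/x+1/(2x^2)>0$ for large $x$, the critical point is unique in the relevant range and $x_*\to\infty$ as $u\to\infty$. Iterating the equation $2\alpha x=u+1+\ln x+O(1/x)$ gives the expansion
\[
x_*=\frac{u}{2\alpha}+\frac{\ln u}{2\alpha}+\frac{1-\ln 2\alpha}{2\alpha}+O\!\Big(\frac{\ln u}{u}\Big).
\]

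Next we evaluate $\psi(x_*)$. The critical point equation gives $u=2\alpha x_*-\ln x_*-1-\tfrac1{2x_*}$. Substituting this into $\psi$ yields
\[
\psi(x_*)=-\alpha x_*^2+x_*-\tfrac12\ln x_*+O(1).
\]
Inserting the expansion of $x_*$ produces $-\frac{u^2}{4\alpha}-\frac{u\ln u}{2\alpha}+\frac{\ln 2\alpha}{2\alpha}u-\frac{\ln^2u}{4\alpha}$ together with a $\ln u$ term. The coefficient of that term has to be checked carefully against $-\big(\tfrac12+\tfrac1{2\alpha}-\tfrac{\ln2\alpha}{2\alpha}\big)$.

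The constant remainder is affected by the cross terms in $\alpha x_*^2$. The product of the $\frac{u}{2\alpha}$ term with the $O(\ln u/u)$ error is only $O(\ln u)$, but it is a deterministic correction. So the expansion of $x_*$ should be pushed one order further, to $\frac{\ln u}{2\alpha u}$-type terms, or else the error should be handled through the stationarity identity. The cleaner route is to write $\psi(x_*)$ via Taylor's formula about the approximate point $\tilde x=\frac{u+\ln u+1-\ln2\alpha}{2\alpha}$. This gives $\psi(x_*)=\psi(\tilde x)+O\big(\psi'(\tilde x)^2\big)$, and $\psi'(\tilde x)=O(\ln u/u)$, so the correction is negligible. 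It then remains to compute $\psi(\tilde x)$ directly, expanding $\ln\tilde x=\ln\frac u{2\alpha}+\frac{\ln u+1-\ln2\alpha}{u}+O(\ln^2u/u^2)$ and collecting terms up to $O(1)$.

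For the discretization, let $n$ be the integer nearest to $x_*$. Then
\[
\psi(x_*)\le\varphi(u)\le\psi(n)\le\psi(x_*)+\tfrac12\sup|\psi''|\cdot\tfrac14\le\psi(x_*)+\alpha .
\]
The lower bound holds because $\psi$ is convex on the relevant range, and $\psi(n)\ge\psi(x_*)$ also for small $n$ since those values are large compared with $\psi(x_*)\approx-u^2/4\alpha$. So $\varphi(u)=\psi(x_*)+O(1)$, which yields the stated bound with constants $t_0$ and $C$ depending only on $\alpha$. I expect the main obstacle to be the bookkeeping of the $\ln t$ coefficient and the $u\ln u/u$ cross terms in computing $\psi(\tilde x)$; everything else is routine.
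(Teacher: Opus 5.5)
The paper gives no proof of this lemma; it is quoted from \cite{Belov}. So there is no in-paper argument to compare yours against. On its own terms, your proposal is correct. The three steps are sound:
\begin{enumerate}
\item minimize the real-variable function $\psi$;
\item evaluate at the explicit point $\tilde x$, with error $\psi(\tilde x)-\psi(x_*)=O(\psi'(\tilde x)^2)=O(\ln^2u/u^2)$, which follows from $\psi''\ge\alpha$ on $[1/\alpha,\infty)$;
\item pass to the nearest integer, losing at most $\alpha/4$, since $0<\psi''\le 2\alpha$ there.
\end{enumerate}
Together these give $\varphi(u)=\psi(\tilde x)+O(1)$.

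The bookkeeping you left open does close. Put $\ell=\ln u-\ln 2\alpha$, so that $\tilde x=(u+\ell+1)/(2\alpha)$ and $\ln\tilde x=\ell+\frac{\ell+1}{u}+O(\ell^2/u^2)$. Since $\alpha\tilde x^2-u\tilde x=\frac{(\ell+1)^2-u^2}{4\alpha}$, one finds
\[
\psi(\tilde x)=-\frac{u^2}{4\alpha}-\frac{u\ell}{2\alpha}-\frac{(\ell+1)^2}{4\alpha}-\frac{\ell}{2}+o(1).
\]
Expanding $\ell$ gives exactly
\[
-\frac{u^2}{4\alpha}-\frac{u\ln u}{2\alpha}+\frac{\ln 2\alpha}{2\alpha}u-\frac{\ln^2u}{4\alpha}-\Big(\frac12+\frac1{2\alpha}-\frac{\ln 2\alpha}{2\alpha}\Big)\ln u+O(1),
\]
including the $\ln u$ coefficient you flagged. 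You could record this explicitly instead of leaving it as ``to be checked''.

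One point to tighten is the lower bound $\varphi(u)\ge\psi(x_*)$. For $\alpha\le\frac14$, $\psi$ is not convex near $x=1$, so state the global minimality of $x_*$ explicitly. Write $\psi'(x)=g(x)-u$ with $g(x)=2\alpha x-\ln x-1-\frac1{2x}$. Then $g$ is bounded on $[1,\max(1,\frac1{2\alpha})]$ and strictly increasing beyond that point. Hence, for $u$ large, $\psi$ decreases on $[1,x_*]$ and increases on $[x_*,\infty)$, which gives $\psi(n)\ge\psi(x_*)$ for every integer $n\ge1$. This is what your remark about small $n$ amounts to, and it is the only place where the argument was stated loosely.
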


Applying Lemma \ref{belov:7} with $\alpha = \frac{\ln m}{4}$ and $u = \frac{\ln m}{2} - \ln|2\pi t| - 1$, we obtain that for sufficiently large $u$ we have
\begin{gather*}
|h(t)| \leq C_3 \exp\lr*{-\lr*{\frac{u^2}{4\alpha} + \frac{u \ln u}{2\alpha} - \frac{\ln 2\alpha}{2\alpha}u + \frac{\ln^2 u}{4\alpha} + \lr*{\frac{1}{2} + \frac{1}{2\alpha} - \frac{\ln 2\alpha}{2\alpha}}\ln u} + C_1} = \\
= C_3 \exp\lr*{-\frac{\lr*{u+\ln u}^2}{\ln m} + \frac{2\ln\lr*{\frac{\ln m}{2}}}{\ln m}u - \lr*{\frac{1}{2} + \frac{2}{\ln m} - \frac{2\ln\lr*{\frac{\ln m}{2}}}{\ln m}}\ln u + C_1} = \\
= C_3 \exp\Biggl(-\frac{1}{\ln m}\ln^2\lr*{\frac{1}{t}\ln\frac{1}{t}} +  \lr*{\frac{2\ln 2\pi}{\ln m} + \frac{2\ln\lr*{\frac{\ln m}{2}}}{\ln m} + \frac{2}{\ln m} - 1}\ln\frac{1}{t} - \\
- \lr*{\frac{3}{2} - \frac{2\ln 2\pi}{\ln m} - \frac{2\ln\lr*{\frac{\ln m}{2}}}{\ln m}}\ln\ln\frac{1}{t} + C_1 + O(1)\Biggr).
\end{gather*}
Then for sufficiently small $t>0$ we have $|h(t)|  \leq \const \cdot e^{-\psi(1/t)}$, where
$$
\psi(x) = \frac{1}{\ln m} \ln^2(x \ln x) - \lr*{ \frac{2\ln 2\pi}{\ln m} +\frac{2\ln\lr*{\frac{\ln m}{2}}}{\ln m} + \frac{2}{\ln m} - 1 } \ln x +
$$
$$
+ \lr*{ \frac{3}{2} -\frac{2 \ln 2\pi}{ \ln m } - \frac{2\ln\lr*{\frac{ \ln m}{2}}}{\ln m }} \ln \ln x.
$$

Note that $\ln t = o(\ln t \ln (- \ln t))$ as $t \to 0^+$, and therefore we have
$$
\frac{|h(t)|}{\exp\lr*{-\frac{1}{\ln m} \ln^2 t + \ln t}} \le \const \cdot \frac{\exp\lr*{-\frac{1}{\ln m} \ln^2 t + \frac{2}{\ln m} \ln t \ln(-\ln t) + O(\ln t)}}{\exp\lr*{-\frac{1}{\ln m} \ln^2 t + \ln t}} \longrightarrow 0.
$$
\section{Proof of Theorem \ref{th:fin_schur}}
To prove the theorem, we need some weill-known combinatorial facts.
\subsection{Definitions}
First we remind some definitions and notations.
\begin{definition} \label{Young:tableau}
        For a fixed integer $n \in \N$ and a partition $\mu = (1 \leq \mu_1 \leq \dots \leq \mu_n)$, a semistandard Young tableau $T$ of shape $\mu$ is a Young diagram (in English notation), where the row lengths from top to bottom are $\mu_n, \mu_{n-1}, \dots, \mu_1$, filled with numbers from $1$ to $K$ (allowing repetitions) such that the entries are non-decreasing along rows and strictly increasing down columns. \\
        We denote the set of all such tableaux by $SSYT_{K, \mu}$. When $K = n$, we simply write $SSYT_\mu$ instead of $SSYT_{n, \mu}$. For each $j$, the weight $w_j(T)$ is defined as the number of times $j$ appears in $T$.
\end{definition}
Throughout this paper all partitions are nondecreasing. Whenever a standard algebraic or combinatorial construction uses nonincreasing order, the components are read in reverse order $(\mu_n,\dots,\mu_1)$. We evaluate $s_\mu$ on $n$ variables, where $n$ is the length of $\mu$.
\begin{definition}
    Let $\mu = (1 \leq \mu_1 \leq \dots \leq \mu_n)$ be a partition. Define the Schur polynomial
        \[
        s_\mu(x_1, \dots, x_n) \eqdef \frac{1}{\prod\limits_{1 \leq j < k \leq n} (x_j - x_k) } \cdot \det \begin{pmatrix}
            x_1^{\mu_n + n - 1} & x_2^{\mu_n + n - 1} & \dots & x_n^{\mu_n + n - 1} \\
            x_1^{\mu_{n - 1} + n - 2} & x_2^{\mu_{n - 1} + n - 2} & \dots & x_n^{\mu_{n - 1} + n - 2} \\
            \vdots & \vdots & \ddots & \vdots \\
            x_1^{\mu_1} & x_2^{\mu_1} & \dots & x_n^{\mu_1}
        \end{pmatrix}.
        \]

\end{definition}
Note that the Schur polynomial is well-defined at coincident points since the numerator divisible by the Vandermonde determinant.
\begin{lemma}
    Following \cite{Macdonald} we use an equivalent definition of the Schur polynomial:
        \[
        s_\mu(x_1, \dots, x_n) \eqdef \sum\limits_{T \in SSYT_\mu} x^T = \sum\limits_{T \in SSYT_\mu} x_1^{w_1(T)} \dots x_n^{w_n(T)}. 
        \]
\end{lemma}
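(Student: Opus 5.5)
We plan to prove that the bialternant in the definition of $s_\mu$ equals the tableau sum in two classical steps, passing through the Jacobi--Trudi determinant in the complete homogeneous symmetric polynomials $h_k$. As in the paper, we reverse the nondecreasing partition to the usual order: put $\lambda_i=\mu_{n+1-i}$, so that $\lambda_1\ge\dots\ge\lambda_n$. The matrix in the definition is then $\bigl(x_j^{\lambda_i+n-i}\bigr)_{i,j}$. Set $h_k=0$ for $k<0$ and $h_0=1$. The intermediate goal is
\[
\det\bigl(x_j^{\lambda_i+n-i}\bigr)_{i,j=1}^n=\det\bigl(h_{\lambda_i-i+r}\bigr)_{i,r=1}^n\cdot\det\bigl(x_j^{n-r}\bigr)_{r,j=1}^n ,
\]
where the last factor is the Vandermonde determinant $\prod_{j<k}(x_j-x_k)$. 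This sign convention agrees with the paper's definition.

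\textbf{Step 1 (factorization).} Let $H(t)=\sum_{k\ge0}h_kt^k=\prod_{l}(1-x_lt)^{-1}$. Let $e^{(j)}_k$ be the elementary symmetric polynomials in the variables with $x_j$ omitted, so that $E^{(j)}(t)=\sum_k e^{(j)}_kt^k=\prod_{l\ne j}(1+x_lt)$. Then $H(t)E^{(j)}(-t)=(1-x_jt)^{-1}$. Comparing coefficients of $t^{N}$ gives $x_j^{N}=\sum_{k=0}^{n-1}(-1)^k e^{(j)}_k\,h_{N-k}$. With $N=\lambda_i+n-i$ and $k=n-r$, this says
\[
\bigl(x_j^{\lambda_i+n-i}\bigr)=\bigl(h_{\lambda_i-i+r}\bigr)_{i,r}\cdot M,\qquad M_{rj}=(-1)^{n-r}e^{(j)}_{n-r}.
\]
Here $M$ does not depend on $\lambda$. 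Specializing to $\lambda=0$ makes the $h$-matrix unitriangular, so $\det M$ equals the Vandermonde determinant. Taking determinants yields the displayed identity. Both sides are polynomial identities, so the quotient is well defined even when some of the points coincide.

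\textbf{Step 2 (Jacobi--Trudi equals the tableau sum).} We will use the Lindström--Gessel--Viennot lemma on the lattice $\Z^2$ with unit steps east and north, and give an east step at height $y$ the weight $x_y$, for $y=1,\dots,n$. The number $h_k(x_1,\dots,x_n)$ is then the weighted count of paths from $(a,1)$ to $(a+k,n)$. Take sources $A_i=(n-i+1,1)$ and sinks $B_r=(\lambda_r+n-r+1,n)$; by symmetry of the determinant under transposition this matches the index pattern $h_{\lambda_i-i+r}$. Because of the planar geometry, only the identity permutation admits vertex-disjoint path families. LGV therefore gives that $\det(h_{\lambda_i-i+r})$ equals the weighted sum over nonintersecting families.

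It remains to biject nonintersecting families with $SSYT_\mu$. Row $i$ of the tableau records the heights of the east steps of the $i$-th path. This row is weakly increasing automatically, and it has length $\lambda_i$, which is row $n+1-i$ of the diagram in the paper's indexing. The weight of the family is then exactly $x^T$.

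\textbf{Main obstacle.} The delicate point is that nonintersection is equivalent to strict increase down the columns. We plan to check this by the standard local argument: if column strictness fails at some entry, the two adjacent paths share a vertex; conversely, a first common vertex produces a column violation. Throughout we must be careful with the index reversal between the paper's nondecreasing convention and the usual one.
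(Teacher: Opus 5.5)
The paper gives no proof of this lemma: it quotes the classical equivalence of the bialternant and tableau definitions and cites Macdonald. Your proposal is a correct, self-contained proof, so the useful comparison is with the route in that source.

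\textbf{Step 1.} This is precisely Macdonald's proof of the Jacobi--Trudi identity. You factor $(x_j^{\lambda_i+n-i})=(h_{\lambda_i-i+r})M$ with $M$ independent of $\lambda$, then evaluate at $\lambda=0$ to identify $\det M$. Your check that $\det(x_j^{n-i})=\prod_{j<k}(x_j-x_k)$ matches the paper's sign convention is right.

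\textbf{Step 2.} Here you genuinely depart from the cited source.
\begin{itemize}
\item Macdonald obtains the tableau formula through skew Schur functions and the branching rule $s_{\lambda/\mu}(x,y)=\sum_\nu s_{\lambda/\nu}(x)s_{\nu/\mu}(y)$, peeling off one variable at a time. A one-variable skew Schur function equals $x^{|\lambda|-|\mu|}$ on horizontal strips and vanishes otherwise. That route lives inside the Hall inner product and Cauchy identity framework, and it yields the skew version as a by-product.
\item Your Lindstr\"om--Gessel--Viennot argument is a direct bijection between nonintersecting path families and $SSYT_\mu$ in exactly $n$ variables. It needs no inner-product or infinite-variable apparatus. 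The price is the planar disjointness check.
\end{itemize}

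\textbf{The disjointness check.} You leave this as a plan, and it goes through as you describe. Use the conventions $T_{i,0}=1$ and $T_{i,\lambda_i+1}=n$.
\begin{itemize}
\item The common vertical lines of $P_i$ and $P_{i+1}$ are $x=n-i+j$ for $1\le j\le\lambda_{i+1}$.
\item On such a line, $P_i$ occupies the heights $[T_{i,j-1},T_{i,j}]$ and $P_{i+1}$ occupies $[T_{i+1,j},T_{i+1,j+1}]$.
\item Disjointness of these intervals means either $T_{i,j}<T_{i+1,j}$ or $T_{i+1,j+1}<T_{i,j-1}$. The second alternative is impossible for $j=1$ because $T_{i,0}=1$. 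For larger $j$ it is excluded inductively, since $T_{i,j-1}<T_{i+1,j-1}\le T_{i+1,j+1}$.
\item Hence disjointness of adjacent paths is exactly column strictness.
\item Nonadjacent pairs are then automatic. Any vertical line met by both $P_i$ and $P_{i+k}$ is met by every intermediate path $P_{i+l}$, because $\lambda_{i+k}\le\lambda_{i+l}$. On such a line the intervals are strictly stacked.
\end{itemize}

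Your indexing is also consistent: the row of length $\lambda_i=\mu_{n+1-i}$ is the $i$-th row from the top in the paper's English diagram.
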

\begin{definition}
    For distinct $x_1, \ldots, x_n$ the divided differences are defined as:
    \begin{enumerate}
        \item $f[x_j] = f(x_j)$ for $1 \leq j \leq n$,
        \item $f[x_k, \dots, x_j] = \frac{f[x_{k+1}, \dots, x_j] - f[x_k, \dots, x_{j - 1}]}{x_j - x_k}$ for $1 \leq k < j \leq n$.
    \end{enumerate}
\end{definition}

\subsection{Auxiliary facts}
First we note that for finite $A$ and for a system $\{ e^{2\pi i n t} \}_{n \in \Z}$ the statement of Lemma \ref{lem:general} can be extended to a weight $w \in L^2(0, 1)$.
    \begin{lemma} \label{lem:L2_weight}
    Fix a set $A = \{ k_1, \dots, k_m \}$ and an arbitrary weight $w \in L^2(0, 1)$. Then 
    \begin{enumerate}
        \item $\{ w(t) e^{2\pi i nt} \}_{n \in \Z \setminus A}$ is complete in $L^2(0, 1)$ if and only if for any $\{c_{m}\}_{m \in A}$ we have
        \[
            \frac{\sum\limits_{ m \in A } c_{m} e^{2 \pi i m t} }{ w(t) } \notin L^2(0, 1) \setminus \{0\}.
        \]
        
        \item $\{ w(t) e^{2\pi i nt} \}_{n \in \Z \setminus A}$ is minimal in $L^2(0, 1)$ if and only if for any $n \notin A$ there exist $\{\xi_{n, m}\}_{ m \in A}$ such that
        \[
            \frac{e^{2 \pi i n t} + \sum\limits_{ m \in A } \xi_{n, m} e^{2 \pi i m t}}{ w(t) } \in L^2(0, 1).
        \]

    \end{enumerate} 
\end{lemma}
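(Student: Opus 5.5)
The plan is to repeat the duality argument from the proof of Lemma \ref{lem:general}, adjusted for the fact that $w$ is now only in $L^2(0,1)$ rather than $L^\infty(0,1)$. We work with the orthonormal basis $\{e^{2\pi i n t}\}_{n\in\Z}$, which is its own biorthogonal system, so $r_n = e^{2\pi i n t}$. The one delicate point is that for $g\in L^2(0,1)$ the product $gw$ is only guaranteed to lie in $L^1(0,1)$, not in $L^2(0,1)$.

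\textbf{Completeness.} Fix $g\in L^2(0,1)$ with $\langle g, w e^{2\pi i n t}\rangle = 0$ for all $n\notin A$. These pairings are well defined, since $gw\in L^1$ by Cauchy--Schwarz, and they equal the Fourier coefficients $\widehat{gw}(n)$. Hence $gw\in L^1(\mathbb T)$ has Fourier spectrum contained in the finite set $A$. By uniqueness of Fourier coefficients for $L^1$ functions,
\[
gw=\sum_{m\in A}c_m e^{2\pi i m t}\quad\text{a.e.}
\]
Since $w>0$ a.e., this gives $g=f/w$ with $f$ a trigonometric polynomial supported on $A$. Conversely, if $f/w\in L^2\setminus\{0\}$ for such an $f$, then $g=f/w$ is a nonzero element of $L^2$. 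Its pairing with $w e^{2\pi i n t}$ equals $\widehat f(n)$, which vanishes for every $n\notin A$, so $g$ annihilates the system. Together these give part (1).

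\textbf{Minimality.} Minimality is equivalent to the existence of a biorthogonal family $\{g_n\}_{n\notin A}\subset L^2(0,1)$. For each $n\notin A$, the condition $\langle g_n, w e^{2\pi i k t}\rangle=\delta_{nk}$ for all $k\notin A$ says that the $L^1$ function $g_n w$ has Fourier coefficients equal to $\delta_{nk}$ off $A$ and arbitrary on $A$. By the same uniqueness argument, $g_n w = e^{2\pi i n t}+\sum_{m\in A}\xi_{n,m}e^{2\pi i m t}$. Conversely, any such quotient that lies in $L^2$ provides the required $g_n$. This gives part (2).

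The only step needing care is the passage from vanishing coefficients to the a.e. identity. This rests on the $L^1$ uniqueness theorem for Fourier series, which is exactly what replaces the Schauder expansion of $gw$ in $L^2$ used in Lemma \ref{lem:general}. Finiteness of $A$ matters here, because it guarantees that the resulting series is a genuine trigonometric polynomial.
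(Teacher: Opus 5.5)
Your proof is correct and follows essentially the same route as the paper. Both arguments use $gw\in L^1(0,1)$, identify $\langle g, w e^{2\pi i n t}\rangle$ with the Fourier coefficient $\widehat{gw}(n)$ (since $w$ is real), and apply $L^1$ uniqueness of Fourier coefficients to conclude that $gw$ is a trigonometric polynomial with spectrum in $A$ (respectively $A\cup\{n\}$); you additionally spell out the converse directions, which the paper leaves implicit.
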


\begin{proof}
    Similar to the proof of Lemma \ref{lem:general}, $\{ w(t) e^{2\pi i nt} \}_{n \in \Z \setminus A}$ is complete if and only if for any $f \in L^2(0, 1)$ the condition $\lrsc*{f(t), w(t) e^{2\pi i n t}} = 0$ for all $n \notin A$ implies $f = 0$.
    Fix such an $f \in L^2(0,1)$. By assumption, $fw \in L^1(0, 1)$. Since $w$ is real-valued, the Fourier coefficients of $fw$ can be expressed as $\widehat{fw}(n) = \lrsc*{f(t), w(t) e^{2\pi i n t}}$. Then the condition $\lrsc*{f, wr_n} = 0$ for all $n \notin A$ is equivalent to the fact that $\widehat{fw}(n) = 0$ for $n \notin A$. Using uniqueness of Fourier coefficients for $L^1(0, 1)$, we conclude that $fw$ can be represented as a sum $f(t)w(t) = \sum\limits_{ m \in A } c_{m} e^{2 \pi i m t}$, which belongs to $L^2(0, 1)$ because of finiteness. Then we obtain
    $$f(t) =  \frac{\sum\limits_{ m \in A } c_{m} e^{2\pi i m t} }{ w(t) }, $$
    where the equality is understood in the sense of $L^2(0, 1)$. Thus, the system $\{ w(t) e^{2\pi i nt} \}_{n \in \Z \setminus A}$ is complete if and only if for any $f = \sum\limits_{ m \in A } c_{m} e^{2 \pi i m t}$ in $L^2(0, 1)$ the following implication holds:
    \[
    \frac{f }{ w } \in L^2(0, 1)  \Longrightarrow f = 0 \text{ almost everywhere}.
    \]
The minimality of $\{ w(t) e^{2\pi i nt} \}_{n \in \Z \setminus A}$ is equivalent to existence of its biorthogonal system $\{f_n  \}_{n \in \Z \setminus A} \subset L^2(0, 1)$. Similarly to the previous point and Lemma \ref{lem:general}, this is equivalent to the fact that $f_n$ has the representation
    \[
    f_n(t) = \frac{e^{2 \pi i n t} + \sum\limits_{ m \in A } \xi_{n, m} e^{2 \pi i m t}}{ w(t) },
    \]
where the fraction on the right-hand side of the equality belongs to $L^2(0,1)$, as required.
\end{proof}
We now demonstrate some properties of the zero set of the weight we have defined.
\begin{lemma}\label{lem:proving_def}
If $P/w\in L^2(0,1)$ for some non-zero trigonometric polynomial $P$, then the zero set $Z(w)$ from Definition \ref{def:zero_set} exists and is unique. Moreover:
\begin{enumerate}
    \item the non-integrability in \eqref{eq:defzero2} is local at $t_d$,
    \item for every trigonometric polynomial $Q$
    \[
    \frac{Q}{w}\in L^2(0,1)
    \quad\Longleftrightarrow\quad
    Q \text{ vanishes at every }t_j\in Z(w)\text{ to order at least }a_j.
    \]
\end{enumerate}
\end{lemma}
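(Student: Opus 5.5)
The plan is to pass to the algebraic structure of trigonometric polynomials via $z=e^{2\pi i t}$ and to localize the integrability condition near each point. We fix a nonzero trigonometric polynomial $P$ with $P/w\in L^2(0,1)$. Multiplying $P$ by a power of $e^{2\pi i t}$ (a unimodular factor, harmless for $L^2$) makes it an algebraic polynomial $p(z)$ of $z=e^{2\pi i t}$. On the circle, $p$ has finitely many zeros $\zeta_1,\dots,\zeta_s$ with multiplicities $b_1,\dots,b_s$, so $|P(t)|\asymp\prod_j|e^{2\pi i t}-\zeta_j|^{b_j}$ uniformly on $[0,1]$; the off-circle zeros contribute a factor bounded above and below. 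Hence $D/w\in L^2$, where $D=\prod_j(e^{2\pi i t}-\zeta_j)^{b_j}$. The key local observation is that for a point $\tau\in\mathbb T$ and any small closed arc $I_\tau$ around it, the factors $(e^{2\pi i t}-\zeta_j)$ with $\zeta_j\ne e^{2\pi i \tau}$ are bounded above and below on $I_\tau$. Therefore, for any trigonometric polynomial $Q$, the condition $Q/w\in L^2(I_\tau)$ depends only on the order of vanishing of $Q$ at $\tau$. Define
\[ n(\tau)=\min\{k\ge 0 : |e^{2\pi i t}-e^{2\pi i \tau}|^k/w \in L^2(I_\tau)\}. \]
Then $n(\tau)\le b_j$ at $\tau=\zeta_j$. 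Away from the $\zeta_j$ we have $1/w\in L^2$ locally, so $n(\tau)=0$ there. Thus $n$ is supported on finitely many points, and the integrability condition is monotone in $k$ because the factor is bounded.

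Next, we set $Z$ to be the multiset in which each $\tau$ appears with multiplicity $a(\tau)=n(\tau)$. Covering $\mathbb T$ by finitely many arcs $I_\tau$ (using the arcs around the $\zeta_j$ together with compactness for the rest), the local statements glue to give $D_Z/w\in L^2(0,1)$, which is \eqref{eq:defzero1}. Removing one factor at $t_d$ lowers the exponent at $t_d$ below $n(t_d)$, so the quotient fails to be in $L^2(I_{t_d})$ by the minimality of $n(t_d)$. This gives \eqref{eq:defzero2}, and it also gives item (1): the non-integrability occurs only on $I_{t_d}$, because elsewhere the quotient differs from $D_Z/w$ by a bounded factor.

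For item (2), take a trigonometric polynomial $Q$. By the same localization, $Q/w\in L^2$ holds if and only if, for every $\tau$, the order of vanishing of $Q$ at $\tau$ (as a function of $z$ on the circle) is at least $n(\tau)$. Here we use that $|Q(t)|\asymp|e^{2\pi i t}-e^{2\pi i\tau}|^{\operatorname{ord}_\tau Q}$ on $I_\tau$, together with the monotonicity and minimality of $n(\tau)$.

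Uniqueness follows from item (2). If $Z'$ is another multiset satisfying \eqref{eq:defzero1} and \eqref{eq:defzero2}, then \eqref{eq:defzero1} for $Z'$ together with item (2) forces $Z'\supseteq Z$ as multisets. If the containment were strict, removing an extra point of $Z'$ would still leave a polynomial vanishing to the required orders on $Z$, so the quotient would lie in $L^2$, contradicting \eqref{eq:defzero2} for $Z'$.

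The step needing the most care is the two-sided comparison $|Q|\asymp|e^{2\pi i t}-e^{2\pi i\tau}|^{k}$ on small arcs, and the fact that $L^2$ membership is determined locally. This is elementary, but it must be stated cleanly, including the identification of the endpoints $0$ and $1$.
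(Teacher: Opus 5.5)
Your proposal is correct and follows essentially the same route as the paper. You define the multiplicity at each point as the least exponent making $(e^{2\pi i t}-e^{2\pi i\tau})^{k}/w$ locally square-integrable, glue the local statements to obtain \eqref{eq:defzero1}, \eqref{eq:defzero2} and item (1), derive item (2) from the local comparison $|Q|\asymp|e^{2\pi i t}-e^{2\pi i\tau}|^{\operatorname{ord}_\tau Q}$, and get uniqueness by applying item (2) to $D_{Z'}$ — all as the paper does.

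The differences are cosmetic: you define $n(\tau)$ on all of $\mathbb T$ and use compactness, where the paper works only at the zeros of $P$ and uses boundedness of $D_Z/P$ elsewhere. Also, the comparison for $Q$ should be made on an arc shrunk to exclude the other zeros of $Q$, which is harmless because $1/w$ is locally square-integrable away from the zeros of $P$.
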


\begin{proof}
Let $t_1,\dots,t_N$ be the distinct zeros of $P$ on $\mathbb T$. We keep each $t_j$ with the smallest nonnegative multiplicity $a_j$, for which the function $\frac{\lr*{e^{2\pi i t}-e^{2\pi i t_j}}^{a_j}}{w(t)}$ is square-integrable near $t_j$. Such a multiplicity exists, since taking the order of $P$ at $t_j$ suffices. Points with $a_j=0$ are omitted. All neighborhoods are taken in $\mathbb T$.

Let $Z$ be the resulting multiset. Near each $t_j$, the other factors of $D_Z$ are bounded and bounded away from zero, so $D_Z/w$ is square-integrable there. Outside small neighborhoods of these points, $D_Z/P$ is bounded. Hence $D_Z/w\in L^2(0,1)$. Note that removing one copy of a point with $a_j>0$ destroys square-integrability near that point, by the choice of $a_j$. It preserves square-integrability outside any neighborhood of that point, where the reciprocal of the removed factor is bounded. Thus $Z$ satisfies conditions \eqref{eq:defzero1} and \eqref{eq:defzero2}, and part 1 holds.

If $Q$ vanishes to the required order at every point of $Z$, then $Q/D_Z$ extends to a bounded function on $\mathbb T$. Hence, $Q/w \in L^2(0, 1)$.
Conversely, if $Q/w \in L^2(0, 1)$, near $t_j$ a non-zero polynomial $Q$ differs from its vanishing factor by a function bounded and bounded away from zero. Now condition $Q/w\in L^2(0,1)$ implies that its order of vanishing at least $a_j$, because of the choice of $a_j$. This proves part 2.

Finally, if $Z'$ also satisfies Definition \ref{def:zero_set}, part 2 applied to $D_{Z'}$ implies that $Z'$ contains $Z$ (counting multiplicities). Any additional copy of a point could be deleted while leaving all the zeros required by $Z$. By part 2, this would contradict \eqref{eq:defzero2} for $Z'$. Hence $Z'=Z$.
\end{proof}

\begin{remark} \label{lem:pereform}
    Under the conditions of Lemma \ref{lem:L2_weight}, we have the following reformulation:
    \begin{enumerate}
        \item $\{ w(t) e^{2\pi i nt} \}_{n \in \Z \setminus A}$ is complete in $L^2(0, 1)$ if and only if for any non-trivial $\{c_{j}\}_{1 \leq j \leq m} \subset \C$ the multiset Z is undefined or the following property holds:
        \[
            \text{The zero set of the function } \sum\limits_{ 1 \leq j \leq m } c_{j} e^{2\pi i k_j t} \text{ does not contain $Z$ as multiset}.
        \]
        
        \item $\{ w(t) e^{2\pi i nt} \}_{n \in \Z \setminus A}$ is minimal in $L^2(0, 1)$ if and only if for any $n \notin A$ the multiset Z is well-defined and there exist $\{\xi_{n, j}\}_{1 \leq j \leq m} \subset \C$ such that
        \[
            \text{The zero set of } e^{2\pi i n t} + \sum\limits_{ 1 \leq j \leq m } \xi_{n, j} e^{2 \pi i k_j t} \text{ contains $Z$ as multiset}.
        \]
    \end{enumerate}
    \end{remark}

\begin{definition}
    Let $A$ be a finite set and let $w$ a weight with zero multiset $Z$. Let $t_1,\ldots,t_N$ denote the distinct nodes of $Z$ with multiplicities $a_1,\ldots,a_N$ respectively. For each $n \in \Z$ define the vector
    \[
    v_n =\lr*{ e^{2 \pi i n t_1 }, {n} e^{2 \pi i n t_1 }, \dots, {n}^{a_1 - 1} e^{2 \pi i n t_1 }, e^{2 \pi i n t_2 }, \dots, {n}^{a_N - 1} e^{2 \pi i n t_N } }.
    \]
    We also define the matrix $M_{w, A}$, whose columns are the vectors $v_{k_j}$:
    \[
    M_{w, A} =  \begin{pmatrix} v_{k_1} & v_{k_2} & \dots & v_{k_m} \end{pmatrix}.
    \]

\end{definition}

Now we can state the following lemma:
\begin{lemma} \label{lem:power_Z}
    Let $w \in L^2(0, 1)$ be a weight and $A = \{ k_1, \dots, k_m\}$. If the system $\{ w(t) e^{2\pi i nt} \}_{n \in \Z \setminus A}$ is complete and minimal, then $Z$ is well-defined and $|Z| = |A|$ counting multiplicities.
\end{lemma}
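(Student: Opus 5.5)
The plan is to combine Lemma \ref{lem:L2_weight}, Lemma \ref{lem:proving_def} and Remark \ref{lem:pereform} with a linear-algebra count on the space of interpolating polynomials.

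First, I will show that $Z$ is well-defined. Minimality together with Lemma \ref{lem:L2_weight}(2) produces, for some $n\notin A$, a nonzero trigonometric polynomial $P_n=e^{2\pi i n t}+\sum_{m\in A}\xi_{n,m}e^{2\pi i m t}$ with $P_n/w\in L^2(0,1)$. Lemma \ref{lem:proving_def} then gives existence and uniqueness of the zero multiset $Z$, with distinct nodes $t_1,\dots,t_N$ and multiplicities $a_1,\dots,a_N$. Part 2 of that lemma reduces every condition to linear conditions on coefficients. For a trigonometric polynomial $Q=\sum_k c_k e^{2\pi i k t}$, the statement $Q/w\in L^2$ is equivalent to $Q^{(j)}(t_i)=0$ for $0\le j<a_i$. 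Up to the invertible triangular change of basis between derivatives and the rows $n^j e^{2\pi i n t_i}$, this is exactly $\sum_k c_k v_k=0$, with the vectors $v_k\in\C^{|Z|}$ defined above.

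Next, completeness gives $|Z|\ge|A|$. By Lemma \ref{lem:L2_weight}(1), completeness means that no nontrivial combination $\sum_{j}c_jv_{k_j}$ vanishes. So the $m$ columns of $M_{w,A}$ are linearly independent, and $M_{w,A}$ has rank $m$. Since it is a $|Z|\times m$ matrix, $|Z|\ge m=|A|$.

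Finally, minimality gives $|Z|\le|A|$. Minimality says that for every $n\notin A$ the vector $v_n$ lies in the column space $V:=\operatorname{span}\{v_{k_1},\dots,v_{k_m}\}$, so that $v_n=-\sum_j\xi_{n,j}v_{k_j}$. Hence all vectors $v_n$, $n\in\Z$, lie in $V$, and $\dim V\le m$. The key claim is that the family $\{v_n\}_{n\in\Z}$ spans all of $\C^{|Z|}$. Suppose instead that a nonzero functional $\beta$ annihilates every $v_n$. Then
\[
\sum_{i=1}^N\sum_{j=0}^{a_i-1}\beta_{i,j}\,n^j e^{2\pi i n t_i}=0 \quad\text{for all } n\in\Z.
\]
Sequences of the form $n^j\zeta_i^n$, with the $\zeta_i=e^{2\pi i t_i}$ distinct, are linearly independent; this is the standard fact behind linear recurrences, equivalently the invertibility of a confluent Vandermonde matrix. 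It follows that $\beta=0$, a contradiction. Therefore $|Z|=\dim\operatorname{span}_n v_n\le\dim V\le m$.

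Combining the two inequalities gives $|Z|=|A|$. The main point requiring care is the spanning claim, i.e.\ the linear independence of the exponential-polynomial sequences; I would prove it by considering $N'$ consecutive values of $n$, where $N'=|Z|$, and invoking nonsingularity of the confluent Vandermonde determinant. The remaining steps are direct bookkeeping through Lemma \ref{lem:proving_def}.
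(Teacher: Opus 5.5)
Your proposal is correct and is essentially the paper's argument. Completeness makes the columns of $M_{w,A}$ independent, so $\dim V_A = m \le |Z|$; minimality places every $v_n$ in $V_A$; and nonsingularity of the confluent Vandermonde matrix built from $|Z|$ consecutive vectors $v_n$ forces $V_A=\C^{|Z|}$, hence $|Z|=m$. The differences are cosmetic: you derive well-definedness of $Z$ directly from minimality via Lemma~\ref{lem:proving_def}, where the paper cites Remark~\ref{lem:pereform}, and you phrase the spanning step through an annihilating functional.
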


\begin{proof}
    By Remark \ref{lem:pereform} the set $Z$ is well-defined. The statement that the zero set of a function $f$ covers the point $t_j$ with multiplicity $a_j$ is equivalent to the condition
    \[
    \sum\limits_{ 1 \leq n \leq m } c_{n} k _n^{s} e^{2\pi i k_n t_j} = 0 \quad \text{for any} \quad s \in \{0, \dots, a_j - 1\},
    \]
    where the condition that the derivatives of the sum vanish is simplified by dividing all the equations by $(2\pi i)^s$. Then conditions of Remark \ref{lem:pereform} is equivalent to the absence of non-trivial $c_1,\dots,c_m$ such that
\begin{equation} \label{matrix:comp}
M_{w, A}
 \cdot 
 \begin{pmatrix}
     c_1 \\
     c_2 \\
     \vdots \\
     c_m
 \end{pmatrix}
 = 0.
\end{equation}

Similarly, minimality is equivalent to the fact that for any $n \notin A$ there exist $\xi_{n, 1}, \dots, \xi_{n, m}$ such that
\begin{equation} \label{matrix:min}
M_{w, A}
 \cdot 
 \begin{pmatrix}
     \xi_{n, 1} \\
     \xi_{n, 2} \\
     \vdots \\
     \xi_{n, m}
 \end{pmatrix}
 = -v_n.
\end{equation}
Consider $\CC^{|Z|}$ and $V_A = \spann\{  v_{k_j} \ | \ 1 \leq j \leq m \} \leq \CC^{|Z|}$. It is clear that $\dim V_A \leq m$. Using \eqref{matrix:comp} we have  $\dim V_A \geq m $. Hence, $\dim V_A = m$. Moreover, \eqref{matrix:min} is equivalent to the fact that $v_n \in \spann \{ v_{k_j} \}_{1 \leq j \leq m} = V_A$ for any $n \notin A$. Thus $v_n \in V_A$ for all $n \in \Z$ since $v_n \in V_A$ by definition $V_A$. In particular, $V_{|Z|} = \spann \{v_1, \dots, v_{|Z|}\} \subseteq V_A$. But the matrix with columns $(v_1, \dots, v_{|Z|})$ is a confluent Vandermonde matrix, which has a non-zero determinant. Indeed, a direct computation gives
\[
\det(v_1,\dots,v_{|Z|}) = \prod_{d=1}^N \lr*{e^{{\pi i t_d}{a_d(a_d+1)}} \prod_{q=0}^{a_d - 1}q!} \prod_{1\leq l<s\leq N}(e^{2\pi i t_s}-e^{2\pi i t_l})^{a_s a_l} \neq 0.
\]
Now $\dim V_{|Z|} = |Z|$. Consequently, $|Z| = \dim V_A = m$ as required.
\end{proof}
Now we can state a criterion for the completeness and minimality of the system $\{ w(t)e^{2 \pi i n t} \}_{n \in \Z \setminus A}$ for $|A| < \infty$.
\begin{lemma} \label{lem:criteria}
    Let $w$ be a weight and $A \subset \Z$ be finite. The system $\{ w(t) e^{2\pi i nt} \}_{n \in \Z \setminus A}$ is complete and minimal if and only if 
    \begin{enumerate}
        \item $w \in L^2(0, 1)$,
        \item $w$ has a finite zero set $Z$ and $|Z| = |A|$,
        \item $ \det M_{w, A} \neq 0$.
    \end{enumerate}
\end{lemma}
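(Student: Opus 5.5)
We prove the two directions separately, working throughout with the linear-algebraic reformulation from the proof of Lemma \ref{lem:power_Z}. Once $Z$ is well defined, Lemma \ref{lem:proving_def}(2) says that $Q/w\in L^2(0,1)$ for a trigonometric polynomial $Q=\sum_n c_n e^{2\pi i n t}$ exactly when $Q$ vanishes at each $t_j$ to order at least $a_j$. This is in turn equivalent to $\sum_n c_n v_n=0$, after dividing the $s$-th derivative by $(2\pi i)^s$.

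\emph{Necessity.} Suppose the system is complete and minimal. Minimality gives, for any $n\notin A$, a function $g_n=f_n/w\in L^2$ as in Lemma \ref{lem:L2_weight}(2). Then $w=f_n/g_n$, and we must show $w\in L^2(0,1)$. Rather than extract this directly, we use the standing convention that Lemma \ref{lem:L2_weight} applies to weights in $L^2$; we would also note that an element $w e^{2\pi i n t}$ of the system lies in $L^2(0,1)$ only if $w\in L^2$. So (1) is immediate, since the system consists of elements of $L^2$. Conditions (2) and (3) then follow from Lemma \ref{lem:power_Z}. It gives that $Z$ is well defined with $|Z|=m$, so $M_{w,A}$ is a square $m\times m$ matrix. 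Its proof also shows that completeness is equivalent to the absence of nontrivial solutions of \eqref{matrix:comp}, i.e.\ to $\ker M_{w,A}=0$, which is $\det M_{w,A}\neq0$.

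\emph{Sufficiency.} Assume (1)--(3). Since $w\in L^2$, Lemma \ref{lem:L2_weight} and Remark \ref{lem:pereform} apply, and $Z$ is defined by (2). For completeness, let $f=\sum_{j}c_je^{2\pi i k_j t}$ satisfy $f/w\in L^2$. Lemma \ref{lem:proving_def}(2) forces $f$ to vanish on $Z$ with multiplicities, which means $M_{w,A}c=0$. Then $\det M_{w,A}\ne0$ gives $c=0$, so $f=0$, and Lemma \ref{lem:L2_weight}(1) yields completeness. For minimality, fix $n\notin A$. Since $M_{w,A}$ is square and invertible, the system \eqref{matrix:min} has the solution $\xi_n=-M_{w,A}^{-1}v_n$. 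The polynomial $e^{2\pi i n t}+\sum_j\xi_{n,j}e^{2\pi i k_jt}$ then vanishes on $Z$ with the correct multiplicities, so dividing it by $w$ gives an element of $L^2$ by Lemma \ref{lem:proving_def}(2) (this uses \eqref{eq:defzero1}). Lemma \ref{lem:L2_weight}(2) then gives minimality.

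The main obstacle is the bookkeeping in the step identifying vanishing on $Z$ with $\sum c_nv_n=0$. There we must check that the vanishing conditions given by derivatives up to order $a_j-1$ are equivalent to those given by the rows $(n^s e^{2\pi i n t_j})_{s<a_j}$. This holds because the derivative conditions form a triangular recombination of the row conditions, and it also applies to zeros at the identified endpoints $0\sim1$. The only other subtlety is making sure that Lemma \ref{lem:proving_def} is invoked only when some nonzero polynomial divided by $w$ lies in $L^2$. In the sufficiency direction this is guaranteed by (2), via $D_Z$.
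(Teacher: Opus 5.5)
Your proof is correct and follows the paper's argument essentially step for step. You get $w\in L^2$ from $|we^{2\pi i n t}|=|w|$, then $|Z|=|A|$ from Lemma \ref{lem:power_Z}, and you identify completeness with $\ker M_{w,A}=0$ and minimality with each $v_n$ lying in the column space of $M_{w,A}$. Solving with $M_{w,A}^{-1}$ is the same as the paper's observation that $V_A=\C^{|Z|}$. Drop the aside about a ``standing convention'' for Lemma \ref{lem:L2_weight}: invoking that lemma before $w\in L^2$ is known would be circular, and the membership argument you give right after it is the one the paper uses and is sufficient on its own.
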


\begin{proof} Suppose the system is exact. Then $|w(t) e^{2\pi i n t}| = |w(t)|$ for all $n \in \Z$. Since $w(t) e^{2\pi i n t} \in L^2(0, 1)$ for $n \notin A$, we can conclude that $w \in L^2(0, 1)$. We have $|Z| = |A|$ by Lemma \ref{lem:power_Z}. Finally, $\det M_{w, A} \neq 0$ since the full rank of this matrix is equivalent to completeness from \eqref{matrix:comp}. \par 
For the proof of sufficiency note that completeness is equivalent to $M_{w, A}$ having full rank. Since $M_{w, A} \in \mathcal{M}_{|Z| \times |A|}(\C)$ and $|Z| = |A|$, this is equivalent to $ \det M_{w, A} \neq 0$, which means that the system $\{ w(t) e^{2\pi i nt} \}_{n \in \Z \setminus A}$ is complete. To establish minimality, observe that $|Z| = |A|$ together with completeness implies $\dim V_A = m = |Z|$. Thus it spans $\C^{|Z|}$. In particular, it contains all $v_n$. This is equivalent to \eqref{matrix:min} by the argument from the previous lemma.
\end{proof}
\subsection{Proof of the theorem}
Shifting $A$ by an integer constant does not affect completeness and minimality of the system $\{ w(t) e^{2\pi i nt} \}_{n \in \Z \setminus A}$. Thus one can assume that $k_1 = 1$, meaning that $A' = ( k_j - j + 1 )_{1 \leq j \leq m}$. It suffices to verify that the conditions of the theorem are equivalent to those of Lemma \ref{lem:criteria}. It remains only to prove that $\det M_{w,A}\neq 0$ if and only if $s_{A'}(e^{2\pi i t_1},\dots,e^{2\pi i t_m})\neq 0$, since the remaining conditions coincide.

Suppose first that all roots are simple, and set $z_r=e^{2\pi i t_r}$. In this case $M_{w,A}=(z_r^{k_j})_{r,j=1}^m$ and the alternant formula gives
\[
    \det M_{w,A} =s_{A'}(z_1,\dots,z_m) \prod_{1\leq r<s\leq m}(z_s-z_r).
\]
Since the nodes are pairwise distinct, the Vandermonde factor is nonzero. Therefore $\det M_{w,A}\ne0$ if and only if $s_{A'}(z_1,\ldots,z_m)\ne0$.

We now allow multiple roots. Let $z_l=e^{2\pi i t_l}$ for $1\leq l\leq N$ be the distinct roots. Let their multiplicities be $a_1,\dots,a_N$, where $a_1+\cdots+a_N=m$. Set $f_j(x)=e^{2\pi i k_jx}$. For each $l$, replace $t_l$ temporarily by points $x_{l,0},\dots,x_{l,a_l-1}$ chosen so that all the numbers $z_{l,q}:=e^{2\pi i x_{l,q}}$ are pairwise distinct. We order the pairs $(l,q)$ lexicographically. For $x = (x_{l, q})$ we define
\[
    M(x)=\lr*{f_j(x_{l,q})}_{(l,q),j} \qquad \text{and} \qquad M_d(x) = \lr*{f_j[x_{l,0},\dots,x_{l,q}]}_{(l,q),j}.
\]
For a fixed $l$ the divided difference in the $q$-th row of $M_d(x)$ is a linear combination of the first $q+1$ rows in the corresponding block of $M(x)$. Thus the passage from $M(x)$ to $M_d(x)$ is effected by a block diagonal matrix whose blocks are lower triangular. Indeed, the explicit formula for a divided difference shows that the coefficient of $f_j(x_{l,q})$ in $f_j[x_{l,0},\dots,x_{l,q}]$ is $\prod\limits_{p=0}^{q-1}(x_{l,q}-x_{l,p})^{-1}$. Consequently,
\[
    \det M_d(x) =\frac{\det M(x)}{\prod\limits_{l=1}^N\prod\limits_{0\leq p<q<a_l}(x_{l,q}-x_{l,p})}.
\]
Applying the alternant formula to $M(x)$ and separating within-block and between-block factors of its Vandermonde determinant, we obtain
\[
    \det M_d(x) = s_{A'}\lr*{(z_{l,q})_{(l,q)}} \lr*{\prod_{l=1}^N\prod_{0\leq p<q<a_l} \frac{z_{l,q}-z_{l,p}}{x_{l,q}-x_{l,p}}} \lr*{\prod_{1\leq l<s\leq N} \prod_{p=0}^{a_l-1}\prod_{q=0}^{a_s-1} (z_{s,q}-z_{l,p})}.
\]
Each quotient $(z_{l,q}-z_{l,p})/(x_{l,q}-x_{l,p})$ extends analytically as function over $x_{l,q}, x_{l,p}$ across $x_{l,q} = x_{l,p}$, with value $2\pi i e^{2\pi i x_{l,p}}$. Consequently, the right-hand side (and hence the equality) extends continuously across coincident variables. Then let $x_{l,q}\to t_l$ for every $l$ and $q$. Since $f_j$ is analytic, the elementary confluence formula for divided differences gives
\[
    f_j[x_{l,0},\dots,x_{l,q}] \longrightarrow \frac{f_j^{(q)}(t_l)}{q!} =\frac{(2\pi i)^q}{q!}\,k_j^qz_l^{k_j}.
\]
This follows directly by substituting the Taylor expansion of $f_j$ at
$t_l$ into the divided difference. By the definition of $M_{w,A}$, it
follows that
\[
    \lim_{x \to t}\det M_d(x) =\lr*{\prod_{l=1}^N\prod_{q=0}^{a_l-1} \frac{(2\pi i)^q}{q!}}\det M_{w,A}.
\]

On the other hand, each within-block quotient tends to $2\pi i z_l$,
and there are $\binom{a_l}{2}$ such quotients in the $l$-th block. Each
between-block factor tends to $z_s-z_l$ and occurs $a_la_s$ times.
Therefore,
\[
    \lim_{x \to t}\det M_d(x) = s_{A'}\lr*{ z_1,\dots,z_1,\dots,z_N} \prod_{l=1}^N(2\pi i z_l)^{\binom{a_l}{2}} \prod_{1\leq l<s\leq N}(z_s-z_l)^{a_l a_s}, 
\]
where $z_l$ repeats $a_l$ times. Comparing the two expressions for the limit and using $\sum_{q=0}^{a_l-1}q=\binom{a_l}{2}$, we obtain the confluent determinant identity
\[
    \det M_{w,A} =s_{A'}\lr*{z_1,\dots,z_1,\dots,z_N} 
    \prod_{l=1}^N \lr*{z_l^{\binom{a_l}{2}} \prod_{q=0}^{a_l-1}q!} \prod_{1\leq l<s\leq N}(z_s-z_l)^{a_la_s}.
\]
The factors following the Schur polynomial are nonzero because each $z_l$ is nonzero and the $z_l$ are pairwise distinct. Hence, $\det M_{w,A}\neq0$ if and only if the Schur polynomial evaluated at the roots (repeated according to their multiplicities) is nonzero.

\subsection{Proof of Corollary \ref{cor:schur}}
First we prove a technical lemma.
\begin{lemma} \label{lem:conv}
For every $T\in SSYT_\mu$, its weight vector
$(w_1(T),\ldots,w_n(T))$ belongs to
\[
 \conv\{(\mu_{\sigma(1)},\ldots,\mu_{\sigma(n)}) \mid \sigma\in S_n\}.
\]
\end{lemma}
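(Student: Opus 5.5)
Lemma \ref{lem:conv} is the classical fact that the content of a semistandard tableau is dominated by its shape, combined with Rado's theorem: a vector $\nu\in\R_{\geq0}^n$ lies in the permutohedron $\conv\{(\mu_{\sigma(1)},\dots,\mu_{\sigma(n)})\}$ if and only if $\sum_i \nu_i=\sum_i\mu_i$ and, after sorting $\nu$ nonincreasingly as $\nu^{\downarrow}$ and reading $\mu$ nonincreasingly as $(\mu_n,\dots,\mu_1)$, every partial sum satisfies $\nu^{\downarrow}_1+\dots+\nu^{\downarrow}_k\le \mu_n+\dots+\mu_{n-k+1}$. So the proof has two parts: establish this dominance inequality for $\nu=(w_1(T),\dots,w_n(T))$, then invoke Rado's theorem (or prove the needed direction directly).

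\textbf{Equality of totals.} Every box of $T$ carries exactly one entry, so $\sum_j w_j(T)=|\mu|$.

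\textbf{Dominance, for initial segments of values.} Fix $k$. All entries $\le k$ lie in the first $k$ rows, because columns strictly increase downward and entries are at least $1$. Hence $w_1(T)+\dots+w_k(T)\le \mu_n+\dots+\mu_{n-k+1}$, the total length of the top $k$ rows.

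\textbf{Dominance, for arbitrary sets of values.} The sorted weight vector needs the bound for an arbitrary $k$-element set $S$ of values, not just $\{1,\dots,k\}$. Here I would use that $s_\mu$ is symmetric: the number of tableaux of content $\nu$ equals the number of content $\nu\circ\pi$ for every permutation $\pi$, e.g.\ via Bender--Knuth involutions, or simply from the determinantal definition of $s_\mu$ given earlier in the paper. So if $\nu$ occurs as the content of some $T$, the sorted vector $\nu^{\downarrow}$ is the content of some $T'$, and the previous step applied to $T'$ gives $\nu^{\downarrow}_1+\dots+\nu^{\downarrow}_k\le\mu_n+\dots+\mu_{n-k+1}$.

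An alternative for this step avoids symmetry: the entries of $S$ in each column are distinct, so each column contains at most $\min(k,\text{column height})$ of them. Summing over columns gives at most the number of boxes in the first $k$ rows, again by conjugation.

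\textbf{Conclusion.} Finish with Rado's theorem. If a self-contained argument is preferred, prove it by induction using T-transforms: repeatedly move mass between two coordinates of $\mu$ to reach $\nu$, each move producing a convex combination of $\mu$ and a transposition of $\mu$.

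I expect the main obstacle to be only bookkeeping: the paper writes partitions in nondecreasing order while dominance is stated for nonincreasing order, so the indices $\mu_n,\dots,\mu_{n-k+1}$ must be handled consistently. I would settle on the column-counting argument for the dominance step, since it is cleanest.
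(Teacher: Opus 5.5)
Your proof is correct and, with the column-counting step you settle on, is essentially the paper's argument. The paper bounds the number of entries of $T$ lying in any $r$-element set $S$ by $\sum_{C}\min(r,\operatorname{height}C)=\sum_{j=n-r+1}^n\mu_j$, notes that the totals agree, and concludes via the subset-sum characterization of majorization, which is exactly your appeal to Rado's theorem (your symmetry/Bender--Knuth alternative is also valid but unnecessary).
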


\begin{proof}
The entries in each column of a semistandard tableau are distinct. Hence, for every set $S\subset\{1,\ldots,n\}$ with $|S|=r$, the number of entries of $T$ that belong to $S$ is at most
\[
 \sum_{\text{columns }C}\min(r,\operatorname{height}C)
 =\sum_{j=n-r+1}^n\mu_j.
\]
Also $\sum_{j=1}^n w_j(T)=\sum_{j=1}^n\mu_j$. Now, the subset-sum characterization of majorization completes the proof.
\end{proof}
As before, we assume $k_1 = 1$. Using Theorem \ref{th:fin_schur}, it suffices to verify that $s_{A'} (e^{2 \pi i t_1}, \dots, e^{2\pi it_m}) \neq 0$, where $A' = (k'_j)_{1 \leq j \leq m} = (k_j -j + 1)_{1 \leq j \leq m}$ and $k_1 = 1.$ Furthermore, since we only need to verify that $s_{A'} (e^{2 \pi i t_1}, \dots, e^{2\pi it_m}) \neq 0$, we can consider $\{t_j\}_{1 \leq j \leq m}$ simply as a multiset in $\R$. Since Schur polynomials are homogeneous, their non-vanishing property is preserved if we multiply all arguments by $e^{-2\pi i \gamma}$ for any $\gamma \in \R$. This corresponds to a global shift $t'_j = t_j - \gamma$. We can choose $\gamma$ such that the shifted points $t'_j$ possess representatives $\tilde{t}_j \in \R$ of their corresponding equivalence classes in $\R/\Z$ whose linear diameter $\max_{i, j} |\tilde{t}_i - \tilde{t}_j|$ is equal to $\diam Z$. Without loss of generality, we will assume that our initial $t_j$ are exactly these representatives. 

For any $x \in \R$ the condition $s_{A'} (e^{2 \pi i t_1}, \dots, e^{2\pi it_m}) \neq 0$ is equivalent to $s_{A'} (e^{2 \pi i \theta_1}, \dots, e^{2\pi i\theta_m}) \neq 0$, where $\theta_j = t_j - x$. This means that
\[
\sum\limits_{T \in SSYT_{A'}} e^{2\pi i \theta_1 w_1(T)}\dots e^{2\pi i \theta_m w_m(T)}  = \sum\limits_{T \in SSYT_{A'}} \exp\lr*{\sum\limits_{j = 1}^m 2\pi i \theta_j w_j(T)} \neq 0.
\]
To show that this holds under our assumptions, we will demonstrate that the real part of each term is greater than zero, that is:
\[
\Re \exp\lr*{\sum\limits_{j = 1}^m 2\pi i \theta_j w_j(T)} = \cos \lr*{\sum\limits_{j = 1}^m 2\pi  \theta_j w_j(T)} > 0 
\ \ \textit{for all} \ \ T \in SSYT_{A'}.
\]
It suffices to show that for some $x \in \R$ we have
\[
\left| \sum\limits_{j = 1}^m \theta_j w_j(T) \right| < \frac{1}{4} \quad \textit{for all} \quad T \in SSYT_{A'}.
\]
The sum inside the absolute value can be interpreted as function $f(\theta_1, \dots, \theta_m,  w_1, \dots, w_m)$. This function is linear with respect to the vector $(w_1,\dots, w_m)$, and moreover $(w_1,\dots, w_m)$ lies in the convex hull $\conv \left\{ (k'_{\sigma(1)}, \dots, k'_{\sigma(m)}) \ | \ \sigma \in S_m \right\}$ by Lemma \ref{lem:conv}. Hence, 
\[
|f(\theta_1, \dots, \theta_m, w_1, \dots, w_m)| \leq \max_{\sigma \in S_m} \left| \sum\limits_{j = 1}^m \theta_j k'_{\sigma(j)} \right|.
\]
Without loss of generality, we assume that $\theta_1 \leq \dots \leq \theta_m.$ Then we have
\[
\max_{\sigma \in S_m} \sum\limits_{j = 1}^m \theta_j k'_{\sigma(j)} = \sum\limits_{j = 1}^m \theta_j k'_{j} \qquad \text{and} \qquad \min_{\sigma \in S_m} \sum\limits_{j = 1}^m \theta_j k'_{\sigma(j)} = \sum\limits_{j = 1}^m \theta_j k'_{m - j + 1},
\]
since $0 < k'_1 \leq \dots \leq k'_m$. Let us denote $B_1 = \sum\limits_{j = 1}^m t_j k'_{j}, \ B_2 = \sum\limits_{j = 1}^m t_j k'_{m - j + 1}$ and $B = \sum\limits_{j = 1}^m k'_j$. Then, from the above observations and the fact that $\theta_j = t_j - x$, we obtain that for any $x \in \R$ the following inequality holds:
\begin{gather*}
    \left|\sum\limits_{j = 1}^m \theta_j w_j(T)\right| \leq \max \lr*{ \left| \min_{\sigma \in S_m} \sum\limits_{j = 1}^m \theta_j k'_{\sigma(j)} \right|, \left| \max_{\sigma \in S_m} \sum\limits_{j = 1}^m \theta_j k'_{\sigma(j)} \right| } = \\ = \max \lr*{ \left| \sum\limits_{j = 1}^m \theta_j k'_{j} \right|, \left| \sum\limits_{j = 1}^m \theta_j k'_{m - j + 1} \right| } = \max \lr*{ |B_1 - xB|, |B_2 - xB|  }.
\end{gather*}
Since this holds for any $x$, we can conclude that
\[
\left|\sum\limits_{j = 1}^m \theta_j w_j(T)\right| \leq \min_{x \in \R} \Big( \max \lr*{ \left| B_1 - xB \right|, \left| B_2 - xB \right| }\Big),
\]
where 
\[
\theta_j = t_j - \argmin\limits_{x \in \R} \Big( \max \lr*{ \left| B_1 - xB \right|, \left| B_2 - xB \right| }\Big).
\]
We have $B_1 \geq B_2$, since $k_j'$ is increasing. If $B_1 = B_2$, then the minimum is achieved at $x = B_1/B = (B_1 + B_2)/(2B)$. If $ B_1 > B_2 $, then the minimum is achieved when $\ B_2 - xB \leq 0 \leq B_1 - xB$. Indeed, if $B_1 - xB$ and $B_2 - xB$ are both positive (respectively, both negative), then a sufficiently small shift of $x$ to the right (respectively, to the left) will decrease $\max \lr*{ \left| B_1 - xB \right|, \left| B_2 - xB \right| }$. Then
\[
\min_{x \in \R}  \Big(\max \lr*{ \left| B_1 - xB \right|, \left| B_2 - xB \right| } \Big) = \min_{x \in \R}  \Big(\max \lr*{  B_1 - xB ,  xB - B_2  } \Big).
\]
This minimum is attained when $B_1 - xB = xB - B_2$, since it is minimum of two functions, one increasing and one decreasing. Now $x = (B_1 + B_2)/(2B)$ and
\begin{gather*}
    \left|\sum\limits_{j = 1}^m \theta_j w_j(T)\right| \leq \frac{B_1 - B_2}{2} = \frac{1}{2} \sum_{j = 1}^m t_j(k'_j - k'_{m - j + 1}) = \frac{1}{2} \sum_{j = 1}^{\left\lfloor \frac{m}{2} \right\rfloor} (t_j - t_{m - j + 1})(k'_j - k'_{m - j + 1}) \leq \\ \leq \frac{\diam Z}{2} \sum_{j = 1}^{\left\lfloor \frac{m}{2} \right\rfloor} (k'_{m - j + 1} - k'_j) = \frac{\diam Z}{2} \sum_{j = 1}^{\left\lfloor \frac{m}{2} \right\rfloor} (k_{m - j + 1} - m + j - 1 + 1 - k_j + j - 1) = \\ = \frac{\diam Z}{2} \sum_{j = 1}^{\left\lfloor \frac{m}{2} \right\rfloor} (k_{m - j + 1} - k_j - m + 2j - 1) = \frac{\diam Z}{2} \lr*{ \sum_{j = 1}^{\left\lfloor \frac{m}{2} \right\rfloor} (k_{m - j + 1} - k_j) -  \left\lfloor \frac{m^2}{4} \right\rfloor } = \\
    = \frac{\diam Z}{4} \lr*{ 2 \sum_{j = 1}^{\left\lfloor \frac{m}{2} \right\rfloor} (k_{m - j + 1} - k_j) -  \left\lfloor \frac{m^2}{2} \right\rfloor } < \frac{1}{4}.
\end{gather*}

\section{Examples}
\begin{example}

Let $A=\{0,1,3\}$ and let $w\in L^2(0,1)$ be a weight whose zero multiset consists of three distinct points $t_1,t_2,t_3\in\mathbb T$, each of multiplicity one. Write $z_j=e^{2\pi it_j}$. The partition associated with $A$ in Theorem 3 is $A'=(1,1,2)$, and $s_{(1,1,2)}(z_1,z_2,z_3)
=z_1z_2z_3(z_1+z_2+z_3)$. Consequently,
\[
\{w(t)e^{2\pi int}\}_{n\in\mathbb Z\setminus A}
\text{ is complete and minimal}
\quad\Longleftrightarrow\quad
z_1+z_2+z_3\ne0.
\]
Therefore the weighted system is exact precisely when the zero locations are not equally spaced around $\mathbb T$. This also gives an exact diameter criterion:
\[
\{w(t)e^{2\pi int}\}_{n\in\mathbb Z\setminus\{0,1,3\}}
\text{ is exact}
\quad\Longleftrightarrow\quad
\operatorname{diam}Z<\frac23.
\]
Indeed, let $g_1,g_2,g_3$ be the successive circular gaps between the zero locations. Then $\diam Z = 1-\max(g_1,g_2,g_3) \leq 2/3$, with equality precisely when $g_1=g_2=g_3=1/3$. Corollary \ref{cor:schur}, however, guarantees exactness for this defect only when $\diam Z < 1/2$. The Theorem \ref{th:fin_schur} therefore establishes exactness for this defect set $A$ beyond the range covered by the uniform diameter estimate.
\end{example}

\begin{example}

Let $d \ge 2$ and $A=\{0,d\}$. Suppose that the zero multiset consists of two distinct simple nodes $z_1,z_2$. The interpolation determinant is $z_2^d-z_1^d$. Writing $\delta = \diam Z\in(0,1/2]$, Theorem \ref{th:fin_schur} therefore gives that exactness equivalent to $d\delta\notin\mathbb Z$. The sharp uniform diameter guarantee for this defect is $\delta<1/d$. Every configuration satisfying this inequality is exact, whereas the nodes $0,1/d$ provide a singular configuration at equality. 

Corollary \ref{cor:schur} gives the more restrictive condition $\delta<\frac1{2(d-1)}$.
The two thresholds agree for $d=2$, but differ for every $d \geq 3$. Thus Corollary \ref{cor:schur} supplies an explicit sufficient condition valid for arbitrary defect sets and zero multiplicities. Its threshold need not be optimal for a fixed defect set. The boundary example with $A=\{0,2\}$ shows that the strict inequality cannot universally be replaced by a non-strict one.
    
\end{example}

\end{document}